\newcommand{\Z}{\mathbb{Z}}
\newcommand{\Q}{\mathbb{Q}}
\newcommand{\F}{\mathbb{F}}
\newcommand{\Aa}{\mathbb{A}}
\newcommand{\val}{\operatorname{val}}
\newcommand{\Aut}{\mathsf{Aut}}
\newcommand{\red}{\mathrm{red}}
\newcommand{\Stab}{\mathsf{Stab}}
\newcommand{\dist}{\mathsf{dist}}
\newcommand{\SA}{\textbf{(SA)}}
\newcommand{\XD}{\textbf{(XD)}}
\numberwithin{equation}{section}
\newtheorem{lemma}{Lemma}
\newtheorem{corollary}[lemma]{Corollary}
\newtheorem{theorem}[lemma]{Theorem}
\newtheorem*{extTheorem}{Theorem}
\newtheorem{proposition}[lemma]{Proposition}
\theoremstyle{definition}
\newtheorem{definition}[lemma]{Definition}
\newtheorem{question}[lemma]{Question}
\newtheorem*{remark}{Remark}
\newif\ifIMFT
\title[Residual Transitivity implies Minimality]{Residual Transitivity implies Minimality for Markoff Surfaces over $p$-adic Integers, by Means of $p$-adic Flows}
\author{Seung uk Jang}
\date{January 29, 2025. Last revised \today}
\begin{document}
\allowdisplaybreaks

\begin{abstract}
    Let $X_D^\ast$ be the non-singuar locus of the Markoff surface $X_D\colon x^2+y^2+z^2=xyz+D$ and consider the set of its $p$-adic integer points $X_D^\ast(\mathbb{Z}_p)$. It is known to Bourgain, Gamburd, and Sarnak that the modulo $p$ transitivity by algebraic automorphisms of $X_0^\ast$ implies minimality of $X_0^\ast(\mathbb{Z}_p)$ by algebraic automorphisms. In this paper, we provide an alternative proof of this fact, by some techniques to study $p$-adic analytic flows. This establish a slight generalization to those parameters $D$ congruent to $0$ modulo $p^2$ or $(D-4)$ being a nonzero quadratic residue.
\end{abstract}
\maketitle

\section{Introduction}

Let $X_D^\ast$ be the algebraic surface defined over a local ring $R$ by
\[X_D^\ast\colon\begin{cases} x^2+y^2+z^2=xyz+D; \\ \text{one of }2x-yz,2y-xz,2z-xy\text{ is invertible}. \end{cases}\]
Here, $D\in R$ is the parameter. If the defining ring $R$ is a field, this means that $X_D^\ast$ is the locus of nonsingular points of the surface $X_D\colon x^2+y^2+z^2=xyz+D$ in the affine 3-space $\Aa^3_{xyz}$.

The group $\mathsf{Aut}(X_D^\ast)$ of algebraic automorphisms of $X_D^\ast$ is generated by sign-change automorphisms $(x,y,z)\mapsto(x,-y,-z)$, etc., permutations of coordinates, and the so-called Vieta involutions
\begin{align*}
    s_x\begin{pmatrix} x \\ y \\ z \end{pmatrix} &= \begin{pmatrix} yz-x \\ y \\ z \end{pmatrix}, & s_y\begin{pmatrix} x \\ y \\ z \end{pmatrix} &= \begin{pmatrix} x \\ xz-y \\ z \end{pmatrix}, & s_z\begin{pmatrix} x \\ y \\ z \end{pmatrix} &= \begin{pmatrix} x \\ y \\ xy-z \end{pmatrix}.
\end{align*}
(See \cite[Thm. 2]{el-huti} and \cite[\S{2.2}]{Goldman2003}.) The subgroup generated by Vieta involutions will be denoted by $\Gamma=\langle s_x,s_y,s_z\rangle$ throughout, which is an index 24 normal subgroup of the automorphism group.

The action of $\Aut(X_D^\ast)$ on $X_D^\ast$ over finite prime fields had been an interest in various works, including \cite{BGS16,BGS16details,Chen2024,EFLMT2023,Martin2025}. One of the key result in the theory is that the group $\Aut(X_0^\ast)$ acts on $X_0^\ast(\Z/p\Z)$ transitively for $p>3.489\times 10^{392}$. 
It is natural to ask whether the same holds for higher powers of $p$. A. Gamburd has announced the following result in \cite[\S{1.5}, Thm. 4]{Gamburd2022}. Here, a group action on a topological space is \emph{minimal} if every orbit is dense.
\begin{extTheorem}[Bourgain--Gamburd--Sarnak]
    Suppose $p>3$ is a prime such that $\Aut(X_0^\ast)$ acts transitively on $X_0^\ast(\Z/p\Z)$. Then the group $\Aut(X_0^\ast)$ acts minimally on $X_0^\ast(\Z_p)$.
\end{extTheorem}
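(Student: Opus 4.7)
My plan is to show minimality by constructing $p$-adic analytic flows inside the closure of the action of $\Aut(X_0^\ast)$, and combining them with the mod-$p$ transitivity hypothesis. As a first reduction, it suffices to show, for every $P\in X_0^\ast(\Z_p)$, that the orbit $\Aut(X_0^\ast)\cdot P$ is dense in the ball $B(P):=\{Q\in X_0^\ast(\Z_p):Q\equiv P\pmod p\}$: given any target $Q'\in X_0^\ast(\Z_p)$, mod-$p$ transitivity supplies $\gamma\in\Aut(X_0^\ast)$ with $\gamma(P)\equiv Q'\pmod p$, and then local density at $\gamma(P)$ provides orbit points arbitrarily close to $Q'$.

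For the local density, I would harvest flows from compositions of Vieta involutions. Since $s_x$ and $s_y$ both fix the $z$-coordinate, their composition $s_xs_y$ preserves each fiber $\{z=z_0\}$; each such fiber is a conic in $(x,y)$ carrying a one-dimensional algebraic group structure, in which $s_xs_y$ acts by translation by a specific element. For $N$ a common multiple of the possible fiber-group orders $p-1$ and $p+1$—say $N=p^2-1$—the iterate $(s_xs_y)^N$ lies fiberwise in the formal (pro-$p$) subgroup, and hence admits an interpolation to a $p$-adic analytic flow $t\mapsto(s_xs_y)^{Nt}$ for $t\in\Z_p$. Cyclic analogues produce flows from $s_ys_z$ and $s_zs_x$.

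The decisive step is to verify that the infinitesimal generators $V_{xy},V_{yz},V_{zx}$ of these flows contain two which are everywhere linearly independent on $X_0^\ast$, and hence span its $2$-dimensional tangent bundle. Granting this, the map
\[\Phi\colon(t_1,t_2)\longmapsto(s_ys_z)^{Nt_2}\bigl((s_xs_y)^{Nt_1}(P)\bigr)\]
is a $p$-adic analytic local isomorphism at the origin, so its image contains a neighborhood of $P$; since the $\Z$-valued iterates of $\Phi$ lie in the orbit of $P$ and are dense in this image, the orbit of $P$ is dense in a neighborhood of $P$. An additional application of mod-$p$ transitivity, together with valuation bookkeeping on the radius of convergence of these flows, should then promote this local density on a small neighborhood to density on the whole of $B(P)$.

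The main obstacle will be the transversality of the three fibrations on $X_0^\ast$, which is what underpins the linear independence of the $V_\star$'s and hence the local isomorphism claim. There is a bad locus where two coordinate fibrations share a tangent line, and one must either exclude it from $X_0^\ast$ using the nonsingularity condition \SA, or argue by $\Gamma$-equivariance that it is reached from the generic locus. The hypotheses on $D\equiv 0\pmod{p^2}$ or on $D-4$ being a nonzero quadratic residue should enter precisely here, ensuring that the conic group structures behave regularly enough in $z_0$ to keep the $p$-adic valuations of the flow generators under control.
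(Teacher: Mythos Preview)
Your architecture matches the paper's: reduce to density in the mod-$p$ ball $B(P)$, and harvest $p$-adic flows from powers $(s_xs_y)^N$, $(s_ys_z)^N$, $(s_zs_x)^N$ with $N$ a multiple of $p^2-1$. The gap is in your penultimate paragraph. Because $(s_xs_y)^N\equiv\mathrm{id}\pmod{p}$, the Bell--Poonen flow $\Phi$ satisfies $\Phi(t_1,t_2)\equiv P\pmod{p}$ for \emph{all} $t_1,t_2\in\Z_p$; under a parametrization $\Psi\colon\Z_p^2\to B(P)$, the flow map is a local isomorphism onto a ball of radius $p^{-1}$ in $\Z_p^2$, i.e.\ onto a single level-$2$ subdisk of $B(P)$, not onto all of $B(P)$. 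Your proposed bridge---``an additional application of mod-$p$ transitivity''---cannot close this: mod-$p$ transitivity supplies elements moving \emph{between} distinct level-$1$ balls, whereas what you need are elements of the \emph{stabilizer} $\Stab_\Gamma(B(P))$ that act transitively on the $p^2$ level-$2$ subdisks inside $B(P)$.

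This is exactly where the paper does real work and where the hypothesis on $D$ enters (not, as you suggest, in controlling valuations of the flow generators). The paper shows that the conjugated powers $\Psi^{-1}(s_zs_x)^{N/2}\Psi$ and $\Psi^{-1}(s_xs_y)^{N/2}\Psi$ act mod $p$ as unipotent matrices generating $\mathsf{SL}_2(\F_p)$, which gives two orbits on $\F_p^2$: the origin and its complement. To merge these one must exhibit some $\gamma\in\Stab_\Gamma(B(P))$ with $\dist(P,\gamma P)=p^{-1}$ exactly---a ``strict move'' within $B(P)$. For $D\equiv 0\pmod{p^2}$ with $\left(\frac{D-4}{p}\right)\neq 1$, the paper obtains this by an external input: Martin's divisibility result that every $\Gamma$-orbit on $X_0^\ast(\Z/p^2\Z)$ has size divisible by $p^2$, combined with the count $|X_0^\ast(\Z/p\Z)|=p(p-3)<p^2$, forces two orbit points to coincide mod $p$ but differ mod $p^2$. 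Without producing such a $\gamma$, your argument stalls at the level-$2$ scale; this step is not ``valuation bookkeeping'' but a genuinely arithmetic obstruction that the flows alone do not overcome.
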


In this article, we prove a slight generalization of the fact, using a technique independent to what was mentioned in \cite{Gamburd2022}. Recall that we can extend modulo $p$ Legendre symbols $\left(\frac{{}\cdot{}}{p}\right)$ to $p$-adic integers.

\begin{theorem}
    \label{thm:main}
    Suppose $p>3$ is a prime and $D\in\Z_p$ be such that $D\equiv 0\pmod{p^2}$ or $\left(\frac{D-4}{p}\right)=1$. 
    If $\Aut(X_D^\ast)$ acts transitively on $X_D^\ast(\Z/p\Z)$, then the group acts minimally on $X_D^\ast(\Z_p)$.
\end{theorem}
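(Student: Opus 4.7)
The plan is to strengthen the hypothesized mod-$p$ transitivity to transitivity modulo $p^k$ for every $k \geq 1$; minimality on $X_D^*(\Z_p)$ then follows from the identification $X_D^*(\Z_p) = \varprojlim_k X_D^*(\Z/p^k\Z)$ together with compactness. I proceed by induction on $k$, the base case $k=1$ being the hypothesis. For the inductive step, given $P \in X_D^*(\Z_p)$, the task is to show that the orbit of $P$ meets every fiber $F_P$ of the reduction map $X_D^*(\Z/p^{k+1}\Z) \to X_D^*(\Z/p^k\Z)$. Since $P$ is smooth, each fiber is a torsor under the tangent space $T_P X_D^* \otimes \F_p \cong \F_p^2$, and the stabilizer $\Stab_\Gamma(P \bmod p^k)$ acts on it by affine transformations (linear part from the derivative, translation part from where $P$ moves). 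The goal therefore reduces to producing enough stabilizer elements whose translation parts span the two-dimensional tangent space.

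These translations will be harvested from $p$-adic analytic flows sitting inside the closure of $\Gamma$ in the $p$-adic automorphism group of $X_D^*(\Z_p)$. Given an infinite-order $\gamma \in \Gamma$ with a $\Z_p$-fixed point $Q$ at which $d\gamma|_Q \in GL_2(\Z_p)$ is of finite order modulo $p$, some iterate $\gamma^N$ has derivative $\equiv I \pmod p$. The formal completion of $X_D^*$ at $Q$ then carries $\gamma^N$ as a tangent-to-identity diffeomorphism, and by the standard log/exp on $p$-adic formal groups, $\gamma^N$ becomes the time-$1$ map of an analytic one-parameter flow $\phi_\gamma^t = \exp(tX_\gamma)$ for $t \in \Z_p$. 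Each $\phi_\gamma^t$ is a $p$-adic limit of integer powers of $\gamma^N$, hence lies in the closure of $\Gamma$, and the closure acts on $X_D^*(\Z_p)$ with the same orbit closures as $\Gamma$ itself. To move the flow near an arbitrary point $P$, I use the mod-$p$ transitivity to pick $g \in \Aut(X_D^*)$ with $gQ \equiv P \pmod p$; the conjugated flow $g\phi_\gamma^t g^{-1}$ is centered at $gQ$ and yields the required translation data within $F_P$.

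The hypotheses on $D$ are precisely what guarantee the existence of suitable fixed-point data. For $\left(\tfrac{D-4}{p}\right) = 1$, the substitution $y = 2$ into $x^2 + y^2 + z^2 = xyz + D$ yields $(x-z)^2 = D - 4$ with $\Z_p$-solutions; at each such point $s_y$ is a genuine involution fixing the plane $y = 2$, and composite elements like $(s_x s_y)^n$ or $(s_z s_y)^n$ provide infinite-order automorphisms with the desired fixed points and derivative data. For $D \equiv 0 \pmod{p^2}$, the reduction $X_D^* \equiv X_0^* \pmod{p^2}$ lets one transport fixed-point data from the classical Markoff cone, where the rich $SL_2$-structure of the Vieta action furnishes ample infinite-order elements with tangent-to-identity iterates.

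The main obstacle will be exhibiting \emph{two} such flows whose tangent vectors at a common (or orbit-equivalent) fixed point are linearly independent modulo $p$. A single flow only contributes translations along a one-dimensional line inside $F_P$, so two independent flows are needed to exhaust the two-dimensional fiber. Verifying this linear independence is a concrete Jacobian computation at the chosen fixed points, handled separately for each of the two hypothesis cases on $D$; the two cases correspond to distinct families of fixed points and require parallel but independent bookkeeping. Once the independence is secured, the translations produced by the two flows span $T_P X_D^* \otimes \F_p$, the stabilizer-affine action on $F_P$ becomes transitive, and the induction closes.
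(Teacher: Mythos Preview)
Your overall architecture---use $p$-adic flows attached to iterates $\gamma^N\equiv\mathrm{id}\pmod p$, then verify that two such flows give linearly independent translations in the tangent fiber---matches the paper's strategy, and for the case $\left(\tfrac{D-4}{p}\right)=1$ your choice of a point with a coordinate $\equiv\pm 2\pmod p$ is essentially the paper's (though your claim that ``$s_y$ is a genuine involution fixing the plane $y=2$'' is not quite right: $s_y$ fixes a point iff $xz=2y$, not iff $y=2$; what matters is rather that powers of $s_ys_z$ act nicely when $x_0\equiv\pm 2$).

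The genuine gap is the case $D\equiv 0\pmod{p^2}$ with $\left(\tfrac{D-4}{p}\right)\neq 1$. Your paragraph ``transport fixed-point data from the classical Markoff cone, where the rich $SL_2$-structure \ldots'' is not a method. Concretely: in that case $\left(\tfrac{-4}{p}\right)=-1$, hence $p\equiv 3\pmod 4$, and one checks that no point of $X_0^\ast(\Z_p)$ has a coordinate $\equiv 0,\pm 2\pmod p$. Consequently none of $s_ys_z$, $s_zs_x$, $s_xs_y$ has a $\Z_p$-fixed point on $X_D^\ast$, and your fixed-point-based flow construction has nothing to anchor to. (More generally, you should note that Bell--Poonen gives a flow on the \emph{entire} polydisk for any analytic $f\equiv\mathrm{id}\pmod p$; insisting on a genuine fixed point is an unnecessary and, here, fatal restriction.) Even dropping fixed points, one still needs a ``small move'': some $\gamma\in\Gamma$ with $\gamma.\mathbf{p}\equiv\mathbf{p}\pmod p$ but $\gamma.\mathbf{p}\not\equiv\mathbf{p}\pmod{p^2}$. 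The paper obtains this non-constructively: it invokes Martin's result that every $\Gamma$-orbit in $X_0^\ast(\Z/p^2\Z)$ has size divisible by $p^2$, counts $|X_0^\ast(\F_p)|=p(p-3)<p^2$, and applies pigeonhole. This step is the crux of the $D\equiv 0\pmod{p^2}$ case and is entirely missing from your sketch.
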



\subsection{Proof Sketch} The proof is based on techniques to study $p$-adic algebraic dynamics introduced in works like \cite{CantatXie2018,CJ24}.

First, we decompose $X_D^\ast(\Z_p)$ into fibers of the reduction map $\red\colon X_D^\ast(\Z_p)\to X_D^\ast(\Z/p\Z)$, each being an analytic copy of $\Z_p^2$. Thanks to the modulo $p$ transitivity assumption, fibers of this reduction map are permuted transitively by $\Gamma$. 
Hence it suffices to show that \emph{one} of the fiber $U$ is minimal by its stabilizer $\Stab_\Gamma(U)$ in $\Gamma=\langle s_x,s_y,s_z\rangle$.

We find $U$ with its analytic parametrization $\Psi\colon\Z_p^2\to U$ that validates the following analysis (see Lemma \ref{lem:minimality-criterion}).
\begin{enumerate}
    \item First, we show that there is a subgroup of $\Stab_\Gamma(U)$ which acts minimally on a subdisk $\Psi((u_0,v_0)+(p\Z_p)^2)\subset U$.
    \item Next, we show that $\Stab_\Gamma(U)$, or more precisely $\Psi^{-1}\Stab_\Gamma(U)\Psi$, acts transitively on the reduction $(\Z_p/p\Z_p)^2$ of $\Z_p^2$.
\end{enumerate}
Step (1) above is precisely where a technique of $p$-adic flows comes into play (see Propositions \ref{lem:local-minimality} and \ref{lem:conjugate-local-minimality}).

Such analysis is done under certain assumptions on points of $U$ (see Proposition \ref{lem:minimal-polydisk-eg}). If $\left(\frac{D-4}{p}\right)=1$, then we can choose a specific disk $U$ (see Lemma \ref{lem:special-point} and Proposition \ref{lem:minimal-polydisk-exceptional}) and show that it is a minimal polydisk. If $\left(\frac{D-4}{p}\right)\neq 1$ yet $D\equiv 0\pmod{p^2}$, then $p\equiv 3\pmod{4}$ must hold and we incorporate D. Martin's approach \cite{Martin2025} to W. Chen's result \cite[Thm. 1.2.5]{Chen2024} to have modulo $p^2$ divisibility of $\Gamma$-orbits. Together with a counting argument, we show that there still exists a minimal polydisk.

\subsection{Outline of the Paper} Sections \ref{sec:bell-poonen-theory} and \ref{sec:computation-gadgets} lists some preliminary tools to describe the analysis. Using these tools, section \ref{sec:local-analysis-stabilizers} establishes local analysis of known stabilizer elements. Section \ref{sec:counting-argument} establishes a ``small move'' of some points by $\Gamma$. Section \ref{sec:criteria-of-good-polydisk} studies which fiber of the reduction map $X_D^\ast(\Z_p)\to X_D^\ast(\Z_p/p\Z_p)$ is minimal. By the same token, we prove our main Theorem \ref{thm:main} in \S{\ref{sec:proof}}. 
Section \ref{sec:generalization} discusses on which parameters we can generalize this technique, with some questions that originate from this interest.

\subsection{Standing Notations} Throughout this article, $p\geq 3$ is an odd prime, $\Gamma=\langle s_x,s_y,s_z\rangle$ is the group generated by Vieta involutions, and $T_p(x)$ is the monic Chebyshev polynomial of degree $p$ (see Definition \ref{def:chebyshev} below).

\section{Bell--Poonen Flow and its Modulo $p^2$ Reduction}
\label{sec:bell-poonen-theory}

Given an analytic map $f\colon\Z_p^m\to\Z_p^m$, if it is congruent to the identity modulo $p$, then it is known that iterates of $f$ interpolate to a $p$-adic flow map $\Phi_f\colon\Z_p\times\Z_p^m\to\Z_p^m$ \cite{Bell2006,Bell2006:corrigendum,Poonen2014}. In this section, we study modulo $p^2$ reduction of this flow map, together with various corollaries that arise from this reduction.

\subsection{Definitions, Terms, and Notations}

We start by terms and notations to describe the necessary analysis.

On the field $\Q_p$ of $p$-adic numbers, we denote the $p$-adic valuation by $\val_p$ and the $p$-adic absolute value by $|{}\cdot{}|_p$. This absolute value is normalized so that $|p|_p=p^{-1}$. The ring of $p$-adic integers $\Z_p$ is the valuation ring by $\val_p$, i.e., the ring of $p$-adic numbers of absolute value at most 1. Its residue field is the field of order $p$, denoted by $\F_p$, and we have natural isomorphisms $\F_p\cong\Z_p/p\Z_p\cong\Z/p\Z$.

On the polydisk $\Z_p^m$, we denote by $x=(x_1,\ldots,x_m)$ its coordinates. Let $x^I=x_1^{i_1}\cdots x_m^{i_m}$ be the monomial associated to the multi-index $I=(i_1,\ldots,i_m)\in\Z_{\geq 0}^m$. By an \emph{analytic function} $f\colon\Z_p^m\to\Z_p$, we mean a function of the form $f(x)=\sum_I c_Ix^I$, where the coefficients $c_I$ are in $\Z_p$ yet $|c_I|_p\to 0$ as $I\to\infty$ (i.e., as the length $|I|:=i_1+\cdots+i_m$ goes to infinity). The set of analytic functions is denoted by $\Z_p\langle x\rangle$, which is completely normed by the \emph{Gauss norm}: if $f=\sum c_Ix^I\in\Z_p\langle x\rangle$, then its Gauss norm is defined as $\|f\|=\max_I|c_I|_p$.

By an \emph{analytic map} $f\colon\Z_p^m\to\Z_p^n$ we mean a vector of $n$ analytic functions, i.e., $f\in\Z_p\langle x\rangle^n$. The Gauss norm of an analytic map $f=(f_1,\ldots,f_n)$ is the maximum Gauss norm of its components: $\|f\|=\max\|f_i\|$. The \emph{identity map} $\mathrm{id}\colon\Z_p^m\to\Z_p^m$ is the distinguished element $(x_1,\ldots,x_m)\in\Z_p\langle x\rangle^m$.

We say two analytic maps or functions $f,g$ are congruent modulo $p^k$, denoted by $f\equiv g\pmod{p^k}$ or $f(x)\equiv g(x)\pmod{p^k}$, if the Gauss norm of their difference $\|f-g\|$ is $\leq p^{-k}$. In other words, if we reduce coefficients of $f$ and $g$ modulo $p^k$, they yield the same series (polynomials, in fact). For the notation $f(x)\equiv g(x)\pmod{p^k}$, if there is a potential confusion, we may add ``as functions of $x$'' to indicate that the congruence is about the Gauss norm.

Suppose $x_0,x_1\in\Z_p^m$ are two points that are congruent modulo $p^k$. Then the monomials $x_0^I$ and $x_1^I$ are congruent modulo $p^k$. Hence for any analytic function $f\in\Z_p\langle x\rangle$, the function values $f(x_0)$ and $f(x_1)$ are congruent modulo $p^k$. That is, $x_0\equiv x_1\pmod{p^k}$ implies $f(x_0)\equiv f(x_1)\pmod{p^k}$. This symbolism generalizes to an extent: if $x_0$, $x_1$ are maps $\in\Z_p\langle x\rangle^n$ such that $x_0\equiv x_1\pmod{p^k}$ then we have $f\circ x_0\equiv f\circ x_1\pmod{p^k}$ for any analytic function $f\colon\Z_p^n\to\Z_p$.

A continuous $\Z_p$-linear operator $T\colon\Z_p\langle x\rangle\to\Z_p\langle x\rangle$ has the \emph{operator norm} $\|T\|_{op}$ defined as the maximal Gauss norm on its image, i.e., $\sup_{f\in\Z_p\langle x\rangle}\|T(f)\|$. Typically, we deal with the post-composition operator: given an analytic self-map $f\colon\Z_p^m\to\Z_p^m$, we induces an operator $T_f\colon\Z_p\langle x\rangle\to\Z_p\langle x\rangle$, $\phi\mapsto\phi\circ f$. A remark of the above paragraph then shows that $\|T_f-T_g\|_{op}=\|f-g\|$; in particular, if $g=\mathrm{id}$, then $\|T_f-\mathrm{Id}\|_{op}=\|f-\mathrm{id}\|$, where $\mathrm{Id}$ is the identity operator on $\Z_p\langle x\rangle$. (See \cite[Lem. 2.1(3)]{CantatXie2018}.)

\subsection{Poonen's formula of the flow}

We first recap a theorem of Bell and Poonen \cite{Bell2006,Bell2006:corrigendum,Poonen2014}, specialized over $\Z_p$, to state the object of analysis.
\begin{theorem}[Bell--Poonen]
    Suppose $f\colon\Z_p^m\to\Z_p^m$ is an analytic map such that $f(x)\equiv x\pmod{p}$. Then there is an analytic map $\Phi_f\colon\Z_p\times\Z_p^m\to\Z_p^m$ such that
    \begin{enumerate}[(a)]
        \item $\Phi_f(n,x)=f^n(x)$ whenever $n\in\Z_{\geq 0}$ and $x\in\Z_p^m$; and
        \item $\Phi_f(s+t,x)=\Phi_f(s,\Phi_f(t,x))$ for all $s,t\in\Z_p$ and $x\in\Z_p^m$.
    \end{enumerate}
    Moreover, $f$ is invertible.
\end{theorem}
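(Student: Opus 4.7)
The plan is to linearize the problem by studying the post-composition operator $T_f\colon\Z_p\langle x\rangle\to\Z_p\langle x\rangle$, interpolate the iterates $\{T_f^n\}_{n\in\Z_{\geq 0}}$ to a continuous one-parameter family $\{E_t\}_{t\in\Z_p}$ of operators using $p$-adic $\log$ and $\exp$, and then translate this family back into a flow $\Phi_f$ of analytic self-maps. The hypothesis $f\equiv\mathrm{id}\pmod p$ together with the identity $\|T_f-\mathrm{Id}\|_{op}=\|f-\mathrm{id}\|$ recalled earlier gives $\|T_f-\mathrm{Id}\|_{op}\leq p^{-1}$, comfortably within the radius of convergence of the $p$-adic logarithm.

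First I would define
\[
L := \log T_f = \sum_{k\geq 1}\frac{(-1)^{k-1}}{k}(T_f-\mathrm{Id})^k,
\]
and verify convergence in operator norm: the $k$-th term has norm at most $p^{\val_p(k)-k}\leq p^{\log_p k - k}\to 0$, and a sharper inspection yields $\|L\|_{op}\leq p^{-1}$. For $t\in\Z_p$ I then set
\[
E_t := \exp(tL) = \sum_{n\geq 0}\frac{t^n L^n}{n!},
\]
and use $|n!|_p\geq p^{-n/(p-1)}$ to bound the $n$-th term by $p^{-n+n/(p-1)}$; for our standing odd prime $p\geq 3$ this tends to $0$, so $E_t$ is a well-defined bounded operator on $\Z_p\langle x\rangle$. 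The usual formal identities between $\exp$ and $\log$ give $E_{s+t}=E_s\circ E_t$ and $E_n=T_f^n$ for $n\in\Z_{\geq 0}$.

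The crucial step is to upgrade each $E_t$ to a continuous $\Z_p$-algebra endomorphism of $\Z_p\langle x\rangle$, since under the bijection between analytic self-maps $h\colon\Z_p^m\to\Z_p^m$ and such endomorphisms $T_h\colon\phi\mapsto\phi\circ h$, this yields the desired analytic map via the components $\Phi_f(t,x)_i:=E_t(x_i)(x)$. For this I would verify that $L$ is a $\Z_p$-linear derivation, i.e., $L(\phi\psi)=L(\phi)\psi+\phi L(\psi)$. The derivation property follows from $T_f$ being an algebra homomorphism by the standard Lie-theoretic fact that $\log(\mathrm{Id}+N)$ is a derivation whenever $\mathrm{Id}+N$ is an algebra automorphism with $N$ small, rearranged term by term using the $p$-adic norm estimates above. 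Once $L$ is a derivation, $\exp(tL)$ preserves products by a formal Leibniz-rule calculation, so $E_t$ is an algebra endomorphism. Property (b) then follows from $E_{s+t}=E_s\circ E_t$ combined with $T_{\Phi_f(t,\cdot)}=E_t$; property (a) from $E_n=T_f^n$; joint analyticity of $\Phi_f$ in $(t,x)$ from viewing $\sum_n t^n L^n(x_i)/n!$ as an element of $\Z_p\langle t,x\rangle$; and invertibility of $f$ from taking $f^{-1}:=\Phi_f(-1,\cdot)$, which satisfies $\Phi_f(1,\Phi_f(-1,\cdot))=\mathrm{id}$ by (b).

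The main obstacle is making the derivation step fully rigorous at the level of $p$-adic convergent series: after substituting the defining series for $L$ and expanding $(T_f-\mathrm{Id})^k(\phi\psi)$ via the noncommutative Leibniz-type identity for powers of $T_f-\mathrm{Id}$, one must uniformly control the bookkeeping of binomial coefficients and factorial denominators so as to rearrange the doubly indexed sum. The estimates $\|(T_f-\mathrm{Id})^k\|_{op}\leq p^{-k}$ and $|k|_p^{-1}\leq k$ justify this rearrangement; once past it, everything reduces to standard $p$-adic functional analysis on $\Z_p\langle x\rangle$.
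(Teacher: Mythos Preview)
Your approach is correct but genuinely different from the paper's. The paper follows Poonen and uses the \emph{Mahler interpolation} (binomial series) rather than $\exp/\log$: it sets
\[
\Phi_f(t,x)=\sum_{j\geq 0}\binom{t}{j}\bigl((T_f-\mathrm{Id})^j(\mathrm{id})\bigr)(x),
\]
checks convergence from $\|(T_f-\mathrm{Id})^j\|_{op}\leq p^{-j}$ and $|j!|_p^{-1}\leq p^{j/(p-1)}$, verifies (a) by the binomial theorem $\sum_j\binom{n}{j}(T_f-\mathrm{Id})^j=T_f^n$, and then gets (b) \emph{by density} of $\Z_{\geq 0}$ in $\Z_p$ from (a). Invertibility is $\Phi_f(-1,\cdot)$ as you do.

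The comparison: your route and the paper's both linearize through $T_f$, but you interpolate $T_f^t$ via $\exp(t\log T_f)$ while the paper interpolates via $\sum_j\binom{t}{j}(T_f-\mathrm{Id})^j$. The Mahler series buys two simplifications. First, it never needs the operators $E_t$ to be algebra homomorphisms, because it applies the interpolated operator \emph{only to the identity map} $\mathrm{id}\in\Z_p\langle x\rangle^m$ and reads off $\Phi_f$ directly; your derivation argument for $L$ (the step you flag as the main obstacle) is therefore avoidable. Indeed, even in your framework you could drop it: once $\Phi_f$ is analytic in $(t,x)$ and (a) holds, both sides of (b) are continuous in $(s,t)$ and agree on $\Z_{\geq 0}^2$, hence everywhere. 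Second, the Mahler expansion is what the paper actually exploits downstream: truncating at $j=2$ gives the key estimate $\Phi_f(t,x)\equiv x+t(f(x)-x)\pmod{p^2}$ immediately, whereas extracting this from $\exp(t\log T_f)$ requires an extra unraveling. Your $\exp/\log$ approach also implicitly needs $p^{-1}<p^{-1/(p-1)}$, i.e.\ $p\geq 3$, which matches the paper's standing hypothesis but is a genuine restriction compared to Poonen's formulation.
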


The map $\Phi_f$ is called the (Bell--Poonen) \emph{flow} generated by $f$.

\begin{proof}
    According to \cite{Poonen2014} (see the proof of its Theorem 1), the following Mahler interpolation
    \begin{equation}
        \label{eqn:flow-formula-by-Poonen}
        \Phi_f(t,x)=\sum_{j=0}^\infty t(t-1)\cdots(t-j+1)\cdot\frac1{j!}\left((T_f-\mathrm{Id})^j(\mathrm{id})\right)(x)
    \end{equation}
    defines an analytic map $\Z_p\times\Z_p^m\to\Z_p^m$. Indeed, each summand has the Gauss norm $\leq p^{j/(p-1)}\cdot p^{-j}\leq p^{-j/2}$, so the partial sums define analytic maps $\Z_p\times\Z_p^m\to\Z_p^m$ and they converge to a limit analytic map $\Phi_f$.     
    Moreover, we have $\Phi_f(n,x)=f^n(x)$ for $n=0,1,2,\cdots$, by the following computation (see \cite{Poonen2014}):
    \begin{align*}
        \Phi_f(n,x) &= \sum_{j=0}^n n(n-1)\cdots(n-j+1)\cdot\frac1{j!}\left((T_f-\mathrm{Id})^j(\mathrm{id})\right)(x) \\
        &= \left(\left(\sum_{j=0}^n\binom{n}{j}(T_f-\mathrm{Id})^j\right)(\mathrm{id})\right)(x) \\
        &= \left((\mathrm{Id}+(T_f-\mathrm{Id}))^n(\mathrm{id})\right)(x)=\left(T_f^n(\mathrm{id})\right)(x)=f^n(x).
    \end{align*}

    That $\Phi_f(s+t,x)=\Phi_f(s,\Phi_f(t,x))$ is clear if $s,t\in\Z_{\geq 0}$. By density, the result still holds for any $s,t\in\Z_p$. In particular, the map $g(x)=\Phi_f(-1,x)$ is the inverse of $f$.
\end{proof}

A finer analysis on the Gauss norm of each term in \eqref{eqn:flow-formula-by-Poonen} derives the followings. The 2nd term ($j=2$) has Gauss norm $\leq p^{-2}$ (as $\frac12\in\Z_p$); the 3rd term ($j=3$) has Gauss norm $\leq p\cdot p^{-3}=p^{-2}$; and the 4th term and beyond ($j\geq 4$) has the Gauss norm $\leq p^{-j/2}\leq p^{-2}$. Therefore, if we reduce the flow $\Phi_f$ modulo $p^2$ then we only look at the 0th and 1st terms:
\begin{align*}
    \Phi_f(t,x) &\equiv \binom{t}{0}\cdot\left((T_f-\mathrm{Id})^0(\mathrm{id})\right)(x) + \binom{t}{1}\cdot\left((T_f-\mathrm{Id})^1(\mathrm{id})\right)(x) \pmod{p^2} \\
    &= x + (f(x)-x)\cdot t.
\end{align*}
That is,
\begin{equation}
    \label{eqn:flow-modulo-p^2}
    \Phi_f(t,x) \equiv x + (f(x)-x)\cdot t\pmod{p^2}.
\end{equation}

\subsection{Inverse Function Theorem}

The following is immediate from \eqref{eqn:flow-modulo-p^2}.

\begin{proposition}
    \label{lem:invertibly-linear-modulo-p-is-invertible}
    Let $f\colon\Z_p^m\to\Z_p^m$ be an analytic map which is modulo $p$ affine-linear, i.e.,
    \begin{equation}
        \label{eqn:affine-linear-mod-p}
        f(x)\equiv A.x+\mathbf{b}\pmod{p},
    \end{equation}
    by an invertible matrix $A\in\mathsf{GL}_m(\Z_p)$ and a vector $\mathbf{b}\in\Z_p^m$. Then $f$ has an analytic inverse $f^{-1}$ such that
    \begin{equation}
        \label{eqn:mod-p-inversion}
        f^{-1}(x)\equiv A^{-1}.(x-\mathbf{b})\pmod{p}.
    \end{equation}
\end{proposition}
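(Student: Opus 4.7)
The plan is to reduce the proposition to the case handled directly by the Bell--Poonen theorem, namely $f \equiv \mathrm{id} \pmod{p}$, and then read off the behaviour of the inverse from Poonen's formula \eqref{eqn:flow-modulo-p^2}. First I would set $g(x) = A^{-1}.(f(x) - \mathbf{b})$, which is again analytic since $A \in \mathsf{GL}_m(\Z_p)$. The hypothesis \eqref{eqn:affine-linear-mod-p} then forces $g(x) \equiv A^{-1}.(A.x + \mathbf{b} - \mathbf{b}) = x \pmod{p}$, so that $g$ fits the hypothesis of the Bell--Poonen theorem.

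Next I would invoke the Bell--Poonen theorem to obtain the flow $\Phi_g\colon\Z_p\times\Z_p^m\to\Z_p^m$ and set $g^{-1}(x) := \Phi_g(-1,x)$; the group law part of the theorem immediately gives that this is a two-sided analytic inverse of $g$. Applying the modulo $p^2$ reduction \eqref{eqn:flow-modulo-p^2} with $t = -1$ yields
\[
g^{-1}(x) \equiv x + (g(x) - x)\cdot(-1) = 2x - g(x) \pmod{p^2},
\]
and in particular $g^{-1}(x) \equiv 2x - x \equiv x \pmod{p}$, since $g(x) \equiv x \pmod{p}$.

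Finally I would unwind the reduction: because $f(x) = A.g(x) + \mathbf{b}$, the inverse of $f$ is $f^{-1}(y) = g^{-1}(A^{-1}.(y - \mathbf{b}))$, which is analytic as a composition of analytic maps. Using the congruence $g^{-1} \equiv \mathrm{id} \pmod{p}$ obtained in the previous step gives
\[
f^{-1}(x) \equiv A^{-1}.(x - \mathbf{b}) \pmod{p},
\]
which is exactly \eqref{eqn:mod-p-inversion}. There is no real obstacle here; the only thing to watch is the legality of the affine change of variables, i.e.\ that $A^{-1}$ has entries in $\Z_p$ (which is what $A \in \mathsf{GL}_m(\Z_p)$ buys us) so that the composition $A^{-1}.(f(x) - \mathbf{b})$ remains in $\Z_p\langle x\rangle^m$ rather than just in $\Q_p\langle x\rangle^m$.
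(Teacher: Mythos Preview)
Your proof is correct and follows essentially the same approach as the paper: both reduce to the Bell--Poonen theorem by precomposing $f$ with the inverse of the affine map $h(x)=A.x+\mathbf{b}$ (your $g$ is the paper's $h^{-1}f$), apply the flow at $t=-1$, and read off the congruence $f^{-1}\equiv h^{-1}\pmod{p}$ from \eqref{eqn:flow-modulo-p^2}.
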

\begin{proof}
    Let $h(x)=A.x+\mathbf{b}$ be the affine map such that $f\equiv h\pmod{p}$. It follows that $h^{-1}f\equiv\mathrm{id}\pmod{p}$, so by the Bell--Poonen theorem we have a flow $\Phi_{h^{-1}f}$ generated by it. By the expansion \eqref{eqn:flow-modulo-p^2}, the flow is estimated modulo $p$ by
    \[\Phi_{h^{-1}f}(t,x)\equiv x\pmod{p}.\]
    In particular, the same estimate applies if we plug in $t=-1$. This yields
    \[f^{-1}(h(x))\equiv x\pmod{p}.\]
    Plug in $h^{-1}(x)$ into $x$ to get $f^{-1}(x)\equiv h^{-1}(x)\pmod{p}$, i.e., \eqref{eqn:mod-p-inversion}.
\end{proof}

\subsection{Local Minimality Results}

Consider the flows $\Phi_f$ and $\Phi_g$ by analytic maps $f,g\colon\Z_p^2\to\Z_p^2$. Fix a point $x_0$ and consider the map
\[(s,t)\in\Z_p^2\mapsto\Phi_f(s,\Phi_g(t,x_0))\in\Z_p^2,\]
which defines an analytic map into $x_0+(p\Z_p)^2$. We discuss when such a map yields a surjection onto $x_0+(p\Z_p)^2$, and derive minimality results from it.

\begin{proposition}
    \label{lem:local-minimality}
    Suppose $f,g\colon\Z_p^2\to\Z_p^2$ are analytic maps such that $f\equiv g\equiv\mathrm{id}\pmod{p}$. Suppose we have a point $x_0\in\Z_p^2$ such that the following determinant by column vectors is in $\Z_p^\times$:
    \begin{equation} 
        \label{eqn:indep-vectorial-part} 
        \det\left[\frac{f(x_0)-x_0}{p},\frac{g(x_0)-x_0}{p}\right]\in\Z_p^\times.
    \end{equation}
    Then the action of $\langle f,g\rangle$ on $x_0+(p\Z_p)^2$ is invariant and minimal.
\end{proposition}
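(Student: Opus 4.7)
My plan is to build, from the Bell--Poonen flows $\Phi_f,\Phi_g$, an analytic map $F\colon\Z_p^2\to x_0+(p\Z_p)^2$ that is surjective, and then to upgrade surjectivity to density of orbits via the density of $\Z^2$ in $\Z_p^2$ together with the relation $F(m,n)=f^m g^n(x_0)$ for $(m,n)\in\Z^2$. Invariance of the polydisk $x_0+(p\Z_p)^2$ under $\langle f,g\rangle$ will come for free: $f\equiv g\equiv\mathrm{id}\pmod{p}$ forces $f(y),g(y)\equiv y\equiv x_0\pmod{p}$ for any $y$ in the polydisk, and Proposition~\ref{lem:invertibly-linear-modulo-p-is-invertible} gives the same for $f^{-1}$ and $g^{-1}$.

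The main computation is the first-order behavior of $F(s,t):=\Phi_f(s,\Phi_g(t,x_0))$. Since $f-\mathrm{id}$ and $g-\mathrm{id}$ have Gauss norm $\leq p^{-1}$, the map $\tilde F(s,t):=(F(s,t)-x_0)/p$ is a bona fide analytic map $\Z_p^2\to\Z_p^2$. Applying the expansion \eqref{eqn:flow-modulo-p^2} to each of $\Phi_f$ and $\Phi_g$, and using $\Phi_g(t,x_0)\equiv x_0\pmod{p}$ to replace the inner argument of $f$ at the first-order level, I expect to obtain
\[\tilde F(s,t)\equiv s\cdot\frac{f(x_0)-x_0}{p}+t\cdot\frac{g(x_0)-x_0}{p}\pmod{p}.\]
By hypothesis \eqref{eqn:indep-vectorial-part}, this reduction is an invertible $\Z_p$-linear map of $\Z_p^2$, so Proposition~\ref{lem:invertibly-linear-modulo-p-is-invertible} produces an analytic inverse of $\tilde F$. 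In particular $F$ surjects onto $x_0+(p\Z_p)^2$.

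Density of the orbit of $x_0$ then follows by picking, for any target $y\in x_0+(p\Z_p)^2$, a preimage $(s_*,t_*)\in\Z_p^2$ under $F$, approximating it by $(m,n)\in\Z^2$, and invoking continuity of $F$: then $f^m g^n(x_0)=F(m,n)$ approaches $y$. To upgrade density of one orbit to \emph{minimality}, I would observe that the hypothesis \eqref{eqn:indep-vectorial-part} is preserved under replacing $x_0$ by any other $y\in x_0+(p\Z_p)^2$, because $(f(y)-y)/p\equiv(f(x_0)-x_0)/p\pmod{p}$ (and similarly for $g$) since both sides depend only on $y\bmod p$. The same construction at $y$ then yields density of the orbit of $y$. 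I do not anticipate a genuine obstacle; the only point needing care is the Gauss-norm reading of $f,g\equiv\mathrm{id}\pmod{p}$ that legitimizes extracting the factor of $p$ from $F-x_0$, after which the whole argument reduces to a routine application of \eqref{eqn:flow-modulo-p^2} and Proposition~\ref{lem:invertibly-linear-modulo-p-is-invertible}.
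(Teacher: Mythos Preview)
Your proposal is correct and follows essentially the same route as the paper: construct $F(s,t)=\Phi_f(s,\Phi_g(t,x_0))$, use \eqref{eqn:flow-modulo-p^2} to see that $(F-x_0)/p$ is linear mod $p$ with matrix invertible by \eqref{eqn:indep-vectorial-part}, and apply Proposition~\ref{lem:invertibly-linear-modulo-p-is-invertible} to get surjectivity. The only difference is cosmetic, in how minimality is extracted from surjectivity of $F$: the paper uses the flow law to write $x_2=\Phi_f(s_2,\Phi_g(t_2-t_1,\Phi_f(-s_1,x_1)))$ and approximates the three times by integers to get $f^a g^b f^c(x_1)$ close to $x_2$, whereas you (equivalently) re-run the whole construction at each base point $y$, which is legitimate since the determinant in \eqref{eqn:indep-vectorial-part} depends only on $y\bmod p$.
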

\begin{proof}
    Because $f$ and $g$, as well as $f^{-1}$ and $g^{-1}$, are all modulo $p$ congruent to the identity map (for $f^{-1}$ and $g^{-1}$, reduce \eqref{eqn:flow-modulo-p^2} modulo $p$), we see that $\langle f,g\rangle$ must leave the disk $x_0+(p\Z_p)^2$ invariant.

    Consider the analytic map $F\colon\Z_p^2\to\Z_p^2$,
    \[F(s,t)=\frac1p\left(\Phi_f(s,\Phi_g(t,x_0))-x_0\right).\]
    By modulo $p^2$ expansion of the flows \eqref{eqn:flow-modulo-p^2}, this map is congruent to,
    \[\frac1p\left(\Phi_f(s,\Phi_g(t,x_0))-x_0\right)\equiv\frac{f(x_0)-x_0}{p}s+\frac{g(x_0)-x_0}{p}t\pmod{p},\]
    as functions of $s,t$. 
    By the assumption \eqref{eqn:indep-vectorial-part} on $x_0$ and Proposition \ref{lem:invertibly-linear-modulo-p-is-invertible}, we see that $F$ is invertible.

    To prove the minimality, let $x_1,x_2\in x_0+(p\Z_p)^2$ be any points. Find $(s_1,t_1)$ and $(s_2,t_2)$ in $\Z_p^2$ so that $(x_i-x_0)/p=F(s_i,t_i)$, i.e., $\Phi_f(s_i,\Phi_g(t_i,x_0))=x_i$ with $i=1,2$. These in particular yield that
    \[x_2 = \Phi_f(s_2,\Phi_g(t_2-t_1,\Phi_f(-s_1,x_1))).\]
    Now pick any integers $a$, $b$, and $c$ approximating $s_2$, $t_2-t_1$, and $-s_1$, respectively, in $\Z_p$. They yield a point $f^ag^bf^c(x_1)$ arbitrarily close to $x_2$. Hence the orbit of $x_1$ by $\langle f,g\rangle$ is dense in $x_0+(p\Z_p)^2$, proving the minimality.
\end{proof}


\begin{proposition}
    \label{lem:conjugate-local-minimality}
    Let $f\colon\Z_p^2\to\Z_p^2$ be an analytic map such that $f\equiv\mathrm{id}\pmod{p}$. 
    Let $g\colon\Z_p^2\to\Z_p^2$ be an analytic map which is affine-linear modulo $p$, i.e., $g(x)\equiv A.x+\mathbf{b}\pmod{p}$ by an invertible matrix $A\in\mathsf{GL}_2(\Z_p)$ and a vector $\mathbf{b}\in\Z_p^2$. Suppose we have a point $x_0\in\Z_p^2$ such that the following determinant is in $\Z_p^\times$:
    \begin{equation}
        \label{eqn:twist-vector-condition}
        \det\left[\frac{f(x_0)-x_0}{p},A.\frac{f(g^{-1}(x_0))-g^{-1}(x_0)}{p}\right]\in\Z_p^\times.
    \end{equation}
    Then $\langle f,gfg^{-1}\rangle$ acts invariantly and minimally on $x_0+(p\Z_p)^2$.
\end{proposition}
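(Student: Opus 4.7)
The plan is to reduce directly to Proposition \ref{lem:local-minimality} applied to the pair $(f, h)$, where $h := gfg^{-1}$. By Proposition \ref{lem:invertibly-linear-modulo-p-is-invertible}, the hypothesis on $g$ provides an analytic inverse $g^{-1}\colon \Z_p^2 \to \Z_p^2$, so $h$ is a well-defined analytic self-map of $\Z_p^2$. It then suffices to verify two things: that $h \equiv \mathrm{id} \pmod{p}$, and that the determinant in \eqref{eqn:twist-vector-condition} agrees modulo $p$ with the determinant required by \eqref{eqn:indep-vectorial-part} for the pair $(f,h)$ at $x_0$.

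Both reductions rest on the following modulo-$p^2$ Taylor-type identity: for every $y, v \in \Z_p^2$,
$$g(y + pv) \equiv g(y) + pA v \pmod{p^2}.$$
To prove it, use the hypothesis $g \equiv A.x + \mathbf{b} \pmod{p}$ to write $g(x) = A x + \mathbf{b} + p\,e(x)$ for some analytic $e \in \Z_p\langle x\rangle^2$. Then direct substitution yields $g(y+pv) - g(y) = pAv + p\bigl(e(y+pv) - e(y)\bigr)$, and the parenthesized term is $\equiv 0 \pmod{p}$ because $y+pv \equiv y \pmod{p}$ and $e$ is analytic, so the last summand is $\equiv 0 \pmod{p^2}$.

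Now specialize to $y = g^{-1}(x)$ and $v = (f(y)-y)/p \in \Z_p^2$ (which is integral since $f \equiv \mathrm{id} \pmod p$). The identity gives $h(x) - x = g(f(y)) - g(y) \equiv pAv \pmod{p^2}$; reducing modulo $p$ confirms $h \equiv \mathrm{id} \pmod{p}$ as a map, while evaluating at $x = x_0$ and dividing by $p$ yields
$$\frac{h(x_0) - x_0}{p} \equiv A \cdot \frac{f(g^{-1}(x_0)) - g^{-1}(x_0)}{p} \pmod{p}.$$
Substituting into the second column of the determinant in \eqref{eqn:indep-vectorial-part} for $(f,h)$ gives, modulo $p$, precisely the determinant in \eqref{eqn:twist-vector-condition}, which is assumed to lie in $\Z_p^\times$. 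Applying Proposition \ref{lem:local-minimality} to $(f,h)$ then delivers invariance and minimality of $\langle f, h\rangle = \langle f, gfg^{-1}\rangle$ on $x_0+(p\Z_p)^2$. The only non-bookkeeping ingredient is the modulo-$p^2$ identity for $g(y+pv)$, and it is precisely there that the hypothesis on the modulo-$p$ affine-linear structure of $g$ (as opposed to mere analyticity) enters.
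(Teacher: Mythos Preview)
Your proof is correct and follows essentially the same approach as the paper: both write $g(x)=Ax+\mathbf{b}+p\cdot(\text{analytic})$, use this to obtain $gfg^{-1}(x)\equiv x+pA\cdot\frac{f(g^{-1}(x))-g^{-1}(x)}{p}\pmod{p^2}$, and then invoke Proposition~\ref{lem:local-minimality}. The only difference is presentational---you isolate the reusable Taylor-type identity $g(y+pv)\equiv g(y)+pAv\pmod{p^2}$, whereas the paper unrolls the same computation inline.
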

\begin{proof}
    Let $v(x)=\frac1p(f(x)-x)$, which defines an analytic map $\Z_p^2\to\Z_p^2$ because $f\equiv\mathrm{id}\pmod{p}$. We show that
    \begin{equation}
        \label{eqn:mod-p^2-expansion-of-conjugation}
        gfg^{-1}(x)\equiv x + p\cdot A.v(g^{-1}(x))\pmod{p^2},
    \end{equation}
    so that we can apply Proposition \ref{lem:local-minimality} to $f$ and $gfg^{-1}$: \eqref{eqn:twist-vector-condition} verifies \eqref{eqn:indep-vectorial-part}.

    First, plug in $g^{-1}(x)$ into $x$ of $f(x)=x+pv(x)$:
    \[fg^{-1}(x) = g^{-1}(x) + pv(g^{-1}(x)).\]
    Let $Q(x)=\frac1p(g(x)-A.x-\mathbf{b})$ which also defines an analytic map $\Z_p^2\to\Z_p^2$. There, apply $g(x)=A.x+\mathbf{b}+pQ(x)$ on both sides:
    \begin{align*}
        gfg^{-1}(x) &= g(g^{-1}(x) + pv(g^{-1}(x))) \\
        &= A.\left(g^{-1}(x) + pv(g^{-1}(x))\right) + \mathbf{b} + pQ(g^{-1}(x)+pv(g^{-1}(x))) \\
        &\equiv A.g^{-1}(x) + pA.v(g^{-1}(x)) + \mathbf{b} + pQ(g^{-1}(x)) \pmod{p^2} \\
        &=A.g^{-1}(x) + \mathbf{b} + pQ(g^{-1}(x)) + pA.v(g^{-1}(x)) \\
        &= g(g^{-1}(x)) + pA.v(g^{-1}(x)) \\
        &= x + p A.v(g^{-1}(x)).
    \end{align*}
    This proves \eqref{eqn:mod-p^2-expansion-of-conjugation}.
\end{proof}

\section{Computation Gadgets}
\label{sec:computation-gadgets}

In this section, we introduce some computation gadgets required for the $p$-adic analysis of powers of maps $s_xs_y$, $s_ys_z$, or $s_zs_x$. All the results and proofs in this section are more or less elementary and perhaps can be skipped during the first reading. We put special emphasis on Propositions \ref{lem:trace-of-rr-modulo-p-uniqueness}, \ref{lem:companion-power-estimate}, and \ref{lem:companion-power-estimate-parab}, as they are referred frequently later.

\subsection{Monic Chebyshev Polynomials}

The Chebyshev polynomials of the 1st and 2nd kind respectively refer to the polynomial expressions of $\cos(N\theta)$ and $\sin((N+1)\theta)/\sin\theta$ by $\cos\theta$. They yield integral polynomials of degree $N$ (if $N\geq 0$) whose top coefficients are, respectively, $2^{N-1}$ and $2^N$.

An alternative version of these families of polynoimals, denoted by $(T_N)_{N\in\Z}$ and $(U_N)_{N\in\Z}$, is defined as follows.
\begin{definition}[Monic Chebyshev polynomials]
    \label{def:chebyshev}
    Define $T_N(x)$ and $U_N(x)$ the unique polynomials in $\Z[x]$ that satisfy
    \begin{align*}
        T_N\left(\lambda+\frac1\lambda\right) &= \lambda^N + \frac1{\lambda^N}, \\
        U_N\left(\lambda+\frac1\lambda\right) &= \frac{\lambda^{N+1}-\lambda^{-N-1}}{\lambda-\lambda^{-1}},
    \end{align*}
    for $N\in\Z$. Equivalently, this is a family of polynomials defined by recurrence relations
    \begin{align*}
        T_N(x) &= xT_{N-1}(x) - T_{N-2}(x), \\
        U_N(x) &= xU_{N-1}(x) - U_{N-2}(x),
    \end{align*}
    for all $N\in\Z$, with initials $(T_{-1}(x),T_0(x))=(x,2)$ and $(U_{-1}(x),U_0(x))=(0,1)$. 
    If $N\geq 0$, we say $T_N$ and $U_N$ respectively the \emph{monic Chebyshev polynomial of 1st} and \emph{2nd kind}, of degree $N$.
\end{definition}

According to A. F. Horadam \cite{Horadam2002}, polynomials $T_N$ are called \emph{Vieta--Lucas polynomials} and polynomials $U_N$ are called \emph{Vieta--Fibonacci polynomials}. We can recover the usual Chebyshev polynomials by $\frac12T_N(2x)$ and $U_N(2x)$: that is, we have $\frac12T_N(2\cos\theta)=\cos(N\theta)$ and $U_N(2\cos\theta)=\sin((N+1)\theta)/\sin\theta$.

By the definition, we have $T_{-N}(x)=T_N(x)$ and $U_{-N-2}(x) = - U_N(x)$ for all $N\in\Z$. For $N>0$, $T_N(x)$ and $U_N(x)$ are monic polynomials of degree $N$. Some first examples are: $T_0(x)=2$, $T_1(x)=T_{-1}(x)=z$, $U_0(x)=1$, $U_{-1}(x)=0$, and $U_{-2}(x)=-1$.

\begin{proposition}
	\label{lem:companion-power}
    Consider the companion matrix
    \begin{equation}
    	\label{eqn:companion-matrix-definition}
	    C(x)=\begin{pmatrix} x & -1 \\ 1 & 0 \end{pmatrix}.
    \end{equation}
    Its power is
    \begin{equation}
    	\label{eqn:companion-matrix-power}
    	C(x)^N=\begin{pmatrix} U_N(x) & -U_{N-1}(x) \\ U_{N-1}(x) & -U_{N-2}(x)\end{pmatrix}.
	\end{equation}
\end{proposition}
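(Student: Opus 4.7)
The approach is a straightforward induction on $N$, leveraging the recurrence $U_N(x) = xU_{N-1}(x) - U_{N-2}(x)$ that defines the monic Chebyshev polynomials of the 2nd kind.

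For the base case $N=0$, the formula \eqref{eqn:companion-matrix-power} gives $\begin{pmatrix}U_0 & -U_{-1}\\ U_{-1} & -U_{-2}\end{pmatrix}=\begin{pmatrix}1 & 0\\ 0 & 1\end{pmatrix}=C(x)^0$, using $U_0=1$, $U_{-1}=0$, $U_{-2}=-1$ from Definition \ref{def:chebyshev}. For the forward inductive step, assume the formula holds at some $N\geq 0$ and compute
\[
C(x)^{N+1}=\begin{pmatrix} U_N & -U_{N-1} \\ U_{N-1} & -U_{N-2}\end{pmatrix}\begin{pmatrix} x & -1 \\ 1 & 0 \end{pmatrix}=\begin{pmatrix} xU_N-U_{N-1} & -U_N \\ xU_{N-1}-U_{N-2} & -U_{N-1}\end{pmatrix}.
\]
Applying the recurrence to the left column turns $xU_N-U_{N-1}$ into $U_{N+1}$ and $xU_{N-1}-U_{N-2}$ into $U_N$, yielding precisely the formula at $N+1$.

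To handle $N<0$, I would use the same tactic in reverse. The recurrence $U_{N-2}(x)=xU_{N-1}(x)-U_N(x)$ is valid for all $N\in\Z$ by the definition, and $C(x)$ is invertible (its determinant is $1$), with
\[
C(x)^{-1}=\begin{pmatrix} 0 & 1 \\ -1 & x \end{pmatrix}.
\]
Assuming the formula at $N$ and multiplying on the right by $C(x)^{-1}$ gives
\[
C(x)^{N-1}=\begin{pmatrix} U_{N-1} & U_N-xU_{N-1} \\ U_{N-2} & U_{N-1}-xU_{N-2}\end{pmatrix}=\begin{pmatrix} U_{N-1} & -U_{N-2} \\ U_{N-2} & -U_{N-3}\end{pmatrix},
\]
again by the backward recurrence; so the formula propagates into negative indices as well. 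Combining the two inductions covers all $N\in\Z$.

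This is an entirely routine verification — there is no real obstacle, just the small bookkeeping of running the induction in both directions. The only cosmetic choice is whether to appeal to Cayley--Hamilton (which gives $C(x)^2=xC(x)-I$, from which one could express $C(x)^N$ in the form $aC(x)+bI$ with $(a,b)$ satisfying the $U_N$-recurrence) instead of the direct matrix multiplication above; but the direct approach is cleaner and matches the form of the target identity verbatim.
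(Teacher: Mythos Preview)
Your proof is correct and follows essentially the same approach as the paper's: both verify the base case $N=0$ and use the recurrence $U_N=xU_{N-1}-U_{N-2}$ to propagate, the only cosmetic difference being that the paper phrases the inductive step as ``left-multiplication by $C(x)$ shifts all indices by $1$'' rather than writing out the right-multiplication explicitly. Your additional treatment of negative $N$ via $C(x)^{-1}$ is a welcome bit of thoroughness that the paper leaves implicit.
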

\begin{proof}
    The recurrence relation for $(U_N)$ can be rewritten as
    \[\begin{pmatrix}
        U_{N} \\ U_{N-1}
    \end{pmatrix}=\begin{pmatrix}
        x & -1 \\ 1 & 0
    \end{pmatrix}\begin{pmatrix}
        U_{N-1} \\ U_{N-2}
    \end{pmatrix}=C(x)\begin{pmatrix}
        U_{N-1} \\ U_{N-2}
    \end{pmatrix}.\]
    That is, multiplying $C(x)$ adds 1 to the indices. Because
    \[\begin{pmatrix} U_{0} & -U_{-1} \\ U_{-1} & -U_{-2} \end{pmatrix}=I_2,\]
    we see that
    \[C(x)^N=C(x)^N\begin{pmatrix} U_{0} & -U_{-1} \\ U_{-1} & -U_{-2}  \end{pmatrix}=\begin{pmatrix} U_N & -U_{N-1} \\ U_{N-1} & -U_{N-2}\end{pmatrix}. \qedhere\]
\end{proof}

\begin{remark}
	Consider the composition map $s_ys_z$, which sends $(x,y,z)$ to
	\[\begin{pmatrix} x \\ y \\ z \end{pmatrix} \overset{s_z}\mapsto \begin{pmatrix} x \\ y \\ xy-z \end{pmatrix} \overset{s_y}\mapsto \begin{pmatrix} x \\ x(xy-z)-y \\ xy-z \end{pmatrix} = \begin{pmatrix} x \\ \begin{pmatrix} x^2-1 & -x \\ x & -1 \end{pmatrix}\begin{pmatrix} y \\ z \end{pmatrix} \end{pmatrix}.\]
	Because $U_2(x)=x^2-1$, $U_1(x)=x$, and $U_0(x)=1$, it is immediate to see that
	\[\begin{pmatrix} x^2-1 & -x \\ x & -1 \end{pmatrix}=\begin{pmatrix} x & -1 \\ 1 & 0 \end{pmatrix}^2=C(x)^2.\]
	So an idea is to ``identify'' $(s_ys_z)$ with $C(x)^2$, especially when we want to study the behavior of its powers. This may explain why we introduce monic Chebyshev polynomials and companion matrices.
\end{remark}

Denote the $N$-th power of the companion matrix by $C_N(x):=C(x)^N$. We then study its properties of derivatives.
\begin{proposition}[First Derivatives]
	If $x\neq\pm 2$, we have
	\begin{equation}
		\label{eqn:companion-power-order-1}
		C_N'(x) = \frac{N}{x^2-4}\begin{pmatrix} T_{N+1}(x) & -T_{N}(x) \\ T_{N}(x) & -T_{N-1}(x) \end{pmatrix} + \frac{U_{N-1}(x)}{x^2-4}\begin{pmatrix} -2 & x \\ -x & 2 \end{pmatrix}.
	\end{equation}
	If $x=\pm 2$, we have
	\begin{align}
		C_N(\pm 2) &= (\pm 1)^N\begin{pmatrix} N+1 & \mp N \\ \pm N & 1-N \end{pmatrix}, \label{eqn:companion-power-order-0-border} \\
		C_N'(\pm 2) &= (\pm 1)^{N+1}\begin{pmatrix} \binom{N+2}{3} & \mp\binom{N+1}{3} \\ \pm\binom{N+1}{3} & -\binom{N}{3} \end{pmatrix}. \label{eqn:companion-power-order-1-border}
	\end{align}
\end{proposition}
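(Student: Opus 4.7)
The plan is to differentiate the companion-power formula in \eqref{eqn:companion-matrix-power} entry by entry, reducing the problem to expressing $U_M'(x)$ for $M \in \{N-2,N-1,N\}$ in a form that matches the claimed decomposition.

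The key tool is the polynomial identity
\[(x^2-4)\,U_M'(x) = (M+1)\,T_{M+1}(x) - x\,U_M(x).\]
I would derive this either by transferring the classical Chebyshev derivative identity $(1-y^2)U_m'(y) = (m+1)T_{m+1}(y) - yU_m(y)$ through the change of variables $y = x/2$ (noting that the monic $T_M$ and $U_M$ equal $2T_M^{\mathrm{cl}}(x/2)$ and $U_M^{\mathrm{cl}}(x/2)$ respectively), or directly by differentiating the closed form $U_M(x) = (\lambda^{M+1}-\lambda^{-M-1})/(\lambda-\lambda^{-1})$ where $\lambda + \lambda^{-1} = x$, or by induction from the $U$-recurrence. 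The auxiliary identity $T_{M+1}(x) = xU_M(x) - 2U_{M-1}(x)$ (which follows from $T_K = U_K - U_{K-2}$ and the $U$-recurrence) converts this into the equivalent form $(x^2-4)U_M'(x) = M\,T_{M+1}(x) - 2\,U_{M-1}(x)$.

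With these two forms in hand, the derivative $C_N'(x) = \begin{pmatrix} U_N' & -U_{N-1}' \\ U_{N-1}' & -U_{N-2}' \end{pmatrix}$ is rewritten entrywise: the diagonal entries use the $MT_{M+1} - 2U_{M-1}$ form, the off-diagonal entries use the $(M+1)T_{M+1} - xU_M$ form, and a single application of $T_{N-1} = U_{N-1} - U_{N-3}$ realigns the $(2,2)$-entry so that all four positions split cleanly into an $N$-multiple of the $T$-matrix plus a $U_{N-1}$-multiple of $\begin{pmatrix} -2 & x \\ -x & 2 \end{pmatrix}$, which is exactly \eqref{eqn:companion-power-order-1}. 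For the boundary case $x = \pm 2$, the matrix $C(\pm 2) \mp I$ is nilpotent with square zero (a one-line check), so the binomial theorem gives
\[C_N(\pm 2) = \bigl(\pm I + (C(\pm 2) \mp I)\bigr)^N = (\pm 1)^N I + N (\pm 1)^{N-1}(C(\pm 2) \mp I),\]
from which \eqref{eqn:companion-power-order-0-border} falls out.

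For \eqref{eqn:companion-power-order-1-border}, I return to the polynomial identity above: both sides vanish at $x = \pm 2$, so differentiating once and evaluating at $x = \pm 2$ yields $\pm 4\,U_M'(\pm 2) = M(M+1)(M+2)(\pm 1)^M \mp 2\,U_M'(\pm 2)$, where I use $T_{M+1}'(x) = (M+1)U_M(x)$ along with the well-known values $T_{M+1}(\pm 2) = 2(\pm 1)^{M+1}$ and $U_M(\pm 2) = (M+1)(\pm 1)^M$. Solving this linear relation gives $U_M'(\pm 2) = (\pm 1)^{M+1}\binom{M+2}{3}$, and substituting $M = N-2, N-1, N$ into the derivative matrix produces \eqref{eqn:companion-power-order-1-border}. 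The main obstacle is establishing the key identity cleanly in the monic convention and keeping the two equivalent forms (together with the index shifts $T_M = U_M - U_{M-2}$ and $xU_M = U_{M+1} + U_{M-1}$) correctly aligned across the four entries of $C_N'$; the boundary case further requires care with signs in the L'Hôpital-style evaluation. Once the identity is secured, the remainder of the argument is elementary term-grouping.
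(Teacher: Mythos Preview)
Your argument is correct, but it takes a genuinely different route from the paper's. The paper parametrizes $x=2\cos\theta$, writes $C_N(2\cos\theta)$ in closed trigonometric form, differentiates in $\theta$, and then translates back via $\cos(N\theta)=\tfrac12 T_N(2\cos\theta)$ and $\sin(N\theta)=\sin\theta\,U_{N-1}(2\cos\theta)$; the boundary cases $x=\pm 2$ are obtained by sending $\theta\to 0$ and computing the limits $(N\sin\theta-\sin(N\theta))/\theta^3\to\binom{N+1}{3}$. By contrast, you work purely algebraically: the derivative identity $(x^2-4)U_M'=(M+1)T_{M+1}-xU_M$ (together with its equivalent form $MT_{M+1}-2U_{M-1}$) lets you read off \eqref{eqn:companion-power-order-1} entry by entry, the nilpotency of $C(\pm 2)\mp I$ gives \eqref{eqn:companion-power-order-0-border} via a two-term binomial expansion, and differentiating the same identity once and evaluating at $x=\pm 2$ gives the closed form $U_M'(\pm 2)=(\pm 1)^{M+1}\binom{M+2}{3}$ needed for \eqref{eqn:companion-power-order-1-border}. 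Your approach has the advantage of staying within polynomial identities and avoiding any limit arguments, which makes it arguably cleaner for a paper working over $\Z_p$; the paper's trigonometric approach is more uniform (all three formulas flow from a single parametrization) but requires the L'H\^opital-style computations you sidestep entirely. One small remark: when you write ``the polynomial identity above'' for the boundary step, you are in fact using the $(M+1)T_{M+1}-xU_M$ form (not the $MT_{M+1}-2U_{M-1}$ form), since the right-hand side must produce the $\mp 2U_M'(\pm 2)$ term after differentiation of $-xU_M$; it would help the reader to say this explicitly.
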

\begin{proof}
	Plug in $x=2\cos\theta$. Then we have
	\begin{equation}
		\label{eqn:companion-power-order-0-trig}
		C_N(2\cos\theta) = \frac1{\sin\theta}\begin{pmatrix}\sin((N+1)\theta) & -\sin(N\theta) \\ \sin(N\theta) & -\sin((N-1)\theta) \end{pmatrix}.
	\end{equation}
	Take the derivative by $\theta$ and divide out $(2\cos\theta)'=-2\sin\theta$ from both sides. Then
	\begin{align*}
		C_N'(2\cos\theta)
		&= \frac{-N}{2\sin^2\theta}\begin{pmatrix} \cos((N+1)\theta) & -\cos(N\theta) \\ \cos(N\theta) & -\cos((N-1)\theta) \end{pmatrix} \\
		&\quad + \frac{\sin(N\theta)}{2\sin^3\theta}\begin{pmatrix} 1 & -\cos\theta \\ \cos\theta & -1 \end{pmatrix},
	\end{align*}
	by computation. As $\cos(N\theta)=\frac12T_N(2\cos\theta)$ and $\sin(N\theta)=\sin\theta U_{N-1}(2\cos\theta)$,
	\begin{align*}
		C_N'(2\cos\theta)&=\frac{-N}{2\sin^2\theta}\begin{pmatrix} \frac12T_{N+1} & -\frac12T_N \\ \frac12T_N & -\frac12T_{N-1} \end{pmatrix} + \frac{U_{N-1}}{2\sin^2\theta}\begin{pmatrix} 1 & -\cos\theta \\ \cos\theta & -1 \end{pmatrix} \\
		&= \frac{N}{-4\sin^2\theta}\begin{pmatrix} T_{N+1} & -T_N \\ T_N & -T_{N-1} \end{pmatrix} + \frac{U_{N-1}}{-4\sin^2\theta}\begin{pmatrix} -2 & 2\cos\theta \\ -2\cos\theta & 2 \end{pmatrix}.
	\end{align*}
	Substitute back $-4\sin^2\theta=x^2-4$ and $2\cos\theta=x$ to get \eqref{eqn:companion-power-order-1}.
	
	To get \eqref{eqn:companion-power-order-0-border}, we note a variant of \eqref{eqn:companion-power-order-0-trig}:
	\[C_N(\pm 2\cos\theta)=\frac{(\pm 1)^N}{\sin\theta}\begin{pmatrix}\sin((N+1)\theta) & \mp\sin(N\theta) \\ \pm\sin(N\theta) & -\sin((N-1)\theta) \end{pmatrix}.\]
	Send $\theta\to 0$ to get \eqref{eqn:companion-power-order-0-border}. To get \eqref{eqn:companion-power-order-1-border}, observe that
	\[C_N(\pm 2\cos\theta) = C_N(\pm 2) \mp C_N'(\pm 2)\theta^2 + O(\theta^4),\]
	so we have
	\[C_N'(\pm 2) = \pm\lim_{\theta\to 0}\frac{C_N(\pm 2)-C_N(\pm 2\cos\theta)}{\theta^2}.\]
	By computation, we get \eqref{eqn:companion-power-order-1-border} as a result. A key fact is that $(N\sin\theta-\sin(N\theta))/\theta^3\to\binom{N+1}{3}$ as $\theta\to 0$.
\end{proof}

\subsection{Traces of Rational Rotations}

Suppose $x$ is in a field $k$ and we can write $x=\lambda+\lambda^{-1}$ by some $\lambda\in\overline{k}$. Then $\lambda$ and $\lambda^{-1}$ are precisely the eigenvalues of the companion matrix $C(x)$ \eqref{eqn:companion-matrix-definition}. If $\lambda$ can be set close to an $N$-th root of unity, then $C(x)^N$ will be close to the identity.

To analytically elaborate this idea, we establish some terms to describe it.
\begin{definition}[Trace of Rational Rotation]
	Let $k$ be a field and let $\zeta$ be a root of unity in $\overline{k}$. If an element $x$ of $k$ can be written in the form $x=\zeta+\zeta^{-1}$ then $x$ is called a \emph{trace of rational rotation}. The \emph{order} of $x$ is the order of $\zeta$, i.e., the minimal integer $N\geq 1$ such that $\zeta^N=1$.
\end{definition}

The order of a trace of rational rotation is well-defined, by the following
\begin{lemma}
    \label{lem:uniqueness-of-trace-of-rr-expression}
	Suppose $\lambda,\mu$ are nonzero elements of a field $k$. If $\lambda+\lambda^{-1}=\mu+\mu^{-1}$, then either $\lambda=\mu$ or $\lambda=\mu^{-1}$.
\end{lemma}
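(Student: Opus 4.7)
The plan is to clear denominators and factor. Starting from the hypothesis $\lambda + \lambda^{-1} = \mu + \mu^{-1}$, I would multiply both sides by $\lambda\mu$ (permissible because $\lambda, \mu$ are nonzero elements of the field $k$) to obtain the polynomial identity
\[\lambda^2\mu + \mu = \lambda\mu^2 + \lambda.\]

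Next I would rearrange all terms to one side and factor:
\[\lambda^2\mu - \lambda\mu^2 - \lambda + \mu = \lambda\mu(\lambda - \mu) - (\lambda - \mu) = (\lambda\mu - 1)(\lambda - \mu).\]
So the hypothesis is equivalent to $(\lambda\mu - 1)(\lambda - \mu) = 0$ in $k$.

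Because $k$ is a field, hence an integral domain, one of the two factors must vanish. If $\lambda - \mu = 0$ we get $\lambda = \mu$; if $\lambda\mu - 1 = 0$ we get $\lambda = \mu^{-1}$. This exhausts the two cases claimed in the lemma. There is no substantive obstacle here — the only thing to be careful about is invoking the integral domain property of $k$, which is automatic since $k$ is assumed to be a field.
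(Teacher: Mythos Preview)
Your proof is correct and essentially identical to the paper's: the paper simply notes the factorization $(\lambda+\lambda^{-1})-(\mu+\mu^{-1})=(\lambda-\mu)(1-(\lambda\mu)^{-1})$, which is your identity $(\lambda\mu-1)(\lambda-\mu)=0$ after multiplying through by $\lambda\mu$.
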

\begin{proof}
	Note that $(\lambda+\lambda^{-1})-(\mu+\mu^{-1})=(\lambda-\mu)(1-(\lambda\mu)^{-1})$.
\end{proof}

So if $x=\lambda+\lambda^{-1}=\mu+\mu^{-1}$ and $\lambda$ has order $r$, then $\mu$ has the same order.

\begin{lemma}
	\label{lem:relation-trace-of-rr-Chebyshev-poly}
	\begin{enumerate}[(a)]
		\item If $x$ is a trace of rational rotation of order $r>2$, then $C(x)^r=I_2$.
		\item Suppose $N>1$. Then $T_N(x)=x$ iff $x$ is a trace of rational rotation of an order dividing $(N-1)$ or $(N+1)$.
	\end{enumerate}
\end{lemma}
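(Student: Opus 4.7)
The plan is to reduce both statements to algebraic manipulations of $\lambda+\lambda^{-1}$ expressions, exploiting the defining identities of $T_N$ and $U_N$ in Definition~\ref{def:chebyshev}. Any $x\in k$ can be written as $x=\lambda+\lambda^{-1}$ for some $\lambda\in\overline{k}^\times$, since the monic quadratic $t^2-xt+1$ has a root in $\overline{k}$ and the product of its two roots is $1$ (so neither can be zero). By Lemma~\ref{lem:uniqueness-of-trace-of-rr-expression}, this $\lambda$ is determined up to inversion.

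For part~(a), write $x=\zeta+\zeta^{-1}$ with $\zeta$ of order $r>2$. The hypothesis $r>2$ forces $\zeta^2\neq 1$, so $\zeta-\zeta^{-1}\neq 0$ and the rational expression $U_N(\lambda+\lambda^{-1})=(\lambda^{N+1}-\lambda^{-N-1})/(\lambda-\lambda^{-1})$ evaluates unambiguously at $\lambda=\zeta$. A short substitution using $\zeta^r=1$ pins down $U_r(x)$, $U_{r-1}(x)$, and $U_{r-2}(x)$ to $1$, $0$, and $-1$ respectively. Inserting these into the power formula \eqref{eqn:companion-matrix-power} of Proposition~\ref{lem:companion-power} yields $C(x)^r=I_2$.

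For part~(b), again set $x=\lambda+\lambda^{-1}$. The identity $T_N(\lambda+\lambda^{-1})=\lambda^N+\lambda^{-N}$ turns the equation $T_N(x)=x$ into $\lambda^N+\lambda^{-N}=\lambda+\lambda^{-1}$. Applying Lemma~\ref{lem:uniqueness-of-trace-of-rr-expression} to $\mu:=\lambda^N$ forces either $\lambda^N=\lambda$ or $\lambda^N=\lambda^{-1}$, equivalently $\lambda^{N-1}=1$ or $\lambda^{N+1}=1$. In either case $\lambda$ is a root of unity whose order divides $N-1$ or $N+1$ respectively, establishing the forward implication. The converse follows by reading the same chain of identities in reverse.

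Neither part hides any real obstacle; the single point demanding care is the hypothesis $r>2$ in~(a), which is genuinely necessary rather than merely convenient, as the direct computations $C(2)\neq I_2$ and $C(-2)^2\neq I_2$ confirm. One might also note that the argument for~(a) could equally well be carried out by diagonalizing $C(x)$ over $\overline{k}$ with eigenvalues $\zeta,\zeta^{-1}$ and raising to the $r$-th power, but working through the $U_N$ formulas keeps the proof internal to the computation gadgets already developed in this section.
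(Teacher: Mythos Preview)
Your proof is correct and, for part~(b), essentially identical to the paper's. For part~(a) the paper takes precisely the diagonalization route you mention as an alternative at the end: it observes that $r>2$ gives $\zeta\neq\zeta^{-1}$, so $C(x)$ is diagonalizable with eigenvalues $\zeta,\zeta^{-1}$, and hence $C(x)^r$ is diagonalizable with both eigenvalues equal to~$1$. Your choice to compute $U_r,U_{r-1},U_{r-2}$ directly and invoke \eqref{eqn:companion-matrix-power} is an equally clean variant that stays closer to the explicit formulas of the section; neither approach has a real advantage over the other here.
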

\begin{proof}
	(a) Write $x=\zeta+\zeta^{-1}$ with $\zeta^r=1$. Then as $r>2$, $\zeta\neq\zeta^{-1}$, so $C(x)$ is diagonalizable and has eigenvalues $\zeta$ and $\zeta^{-1}$. Taking the $r$-th power of $C(x)$ then yields a diagonalizable matrix of eigenvalues 1 and 1, i.e., the identity matrix $I_2$.
	
	(b) Write $x=\lambda+\lambda^{-1}$. We have $T_N(x)=x$ iff $\lambda^N+\lambda^{-N}=\lambda+\lambda^{-1}$ iff $\lambda^N=\lambda^{\pm 1}$ iff $\lambda^{N+1}=1$ or $\lambda^{N-1}=1$. The last condition is equivalent to that $\lambda$ is a root of unity whose order divides $(N-1)$ or $(N+1)$.
\end{proof}

In part (a), note that the only traces of rational rotations of order $\leq 2$ are $2=1+1$ and $-2=(-1)+(-1)$. Indeed, $C(\pm 2)$ are unipotent.

\subsection{Contraction Properties of monic Chebyshev Polynomials}

Part (b) of Lemma \ref{lem:relation-trace-of-rr-Chebyshev-poly} can be used to establish the following
\begin{proposition}
	\label{lem:trace-of-rr-modulo-p-uniqueness}
	Every modulo $p$ residue class of $\Z_p$ contains a unique trace of rational rotation whose order divides $(p-1)$ or $(p+1)$.
\end{proposition}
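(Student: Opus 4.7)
The plan is to identify the traces of rational rotation in question with the roots of $T_p(x)-x$ in $\Z_p$, and then perform a Hensel lift from $\F_p$, where the roots exhaust the whole residue field.

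First, I would invoke Lemma \ref{lem:relation-trace-of-rr-Chebyshev-poly}(b) with $N=p$: an element $x\in\Z_p$ is a trace of rational rotation whose order divides $(p-1)$ or $(p+1)$ if and only if $T_p(x)=x$. Thus it suffices to show that $T_p(x)-x$, viewed in $\Z_p[x]$, has exactly one root in each residue class modulo $p$.

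The crux is the polynomial identity $T_p(x)\equiv x^p\pmod{p}$, which I would derive directly from the defining identity $T_p(\lambda+\lambda^{-1})=\lambda^p+\lambda^{-p}$. Reducing this identity mod $p$ in $\F_p[\lambda,\lambda^{-1}]$ and using Frobenius $(\lambda+\lambda^{-1})^p=\lambda^p+\lambda^{-p}$, the polynomial $T_p(x)-x^p\in\Z[x]$ reduces to an element of $\F_p[x]$ that vanishes after the substitution $x\mapsto\lambda+\lambda^{-1}$. Since that substitution is injective on $\F_p[x]$ (for any $P\in\F_p[x]$, the Laurent polynomial $\lambda^{\deg P}P(\lambda+\lambda^{-1})\in\F_p[\lambda]$ has the same leading coefficient as $P$), this forces $T_p(x)\equiv x^p\pmod{p}$ as polynomials in $\Z[x]$. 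Consequently
\[ T_p(x)-x\equiv x^p-x\pmod{p}, \]
and by Fermat's little theorem the right-hand side splits completely over $\F_p$, acquiring each element of $\F_p$ as a simple root.

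Next, I would verify the Hensel nondegeneracy of $T_p(x)-x$: since $T_p(x)\equiv x^p\pmod{p}$, we get $T_p'(x)\equiv px^{p-1}\equiv 0\pmod{p}$, so
\[ (T_p(x)-x)'=T_p'(x)-1\equiv -1\pmod{p}, \]
which is a unit. By Hensel's lemma, each of the $p$ simple roots of $T_p(x)-x$ in $\F_p$ lifts to a unique root in $\Z_p$, yielding exactly one root in every residue class mod $p$. Combined with the first step, this gives both existence and uniqueness in each residue class.

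I do not expect any serious obstacle: the only nontrivial input is the congruence $T_p(x)\equiv x^p\pmod{p}$, which falls out from Frobenius together with injectivity of the Chebyshev substitution. Everything else is Hensel plus a root count.
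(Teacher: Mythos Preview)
Your proposal is correct and shares the paper's overall architecture: both reduce via Lemma~\ref{lem:relation-trace-of-rr-Chebyshev-poly}(b) to finding a unique fixed point of $T_p$ in each residue disk, and both establish the key congruence $T_p(x)\equiv x^p\pmod{p}$ (the paper via the binomial expansion of $(\lambda+\lambda^{-1})^p$, you via Frobenius plus injectivity of the Chebyshev substitution---your version is slightly slicker).

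The genuine difference is in the lifting step. You check that $(T_p(x)-x)'\equiv -1\pmod{p}$ is a unit and invoke Hensel's lemma directly. The paper instead proves that $T_p$ is a $p^{-1}$-contraction on each disk $x_0+p\Z_p$ (its Lemma~\ref{lem:Tp-contraction}) and applies the Banach fixed-point theorem. These are two packagings of the same underlying mechanism---Hensel's lemma is typically proved by a contraction or Newton iteration---but your route is the more standard algebraic one-liner, while the paper's explicit contraction estimate is phrased to match the analytic flavor of the surrounding $p$-adic flow arguments. Neither approach buys anything the other lacks here; the paper's choice is stylistic consistency, yours is economy.
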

\begin{proof}
By Lemma \ref{lem:relation-trace-of-rr-Chebyshev-poly}(b), it is equivalent to say that each residue class $x_0+p\Z_p$ contains a unique fixed point of the map $x\mapsto T_p(x)$ (where $T_p$ is the monic Chebyshev polynomial of degree $p$, see Definition \ref{def:chebyshev}). This leads us to study some contraction properties of this map.

\begin{lemma}
    \label{lem:mod-p-reduction-of-Tp}
    We have $T_p(x)\equiv x^p\pmod{p}$ as functions of $x\in\Z_p$.
\end{lemma}
\begin{proof}
	View $x=\lambda+\lambda^{-1}$. Apply the binomial theorem on $(\lambda+\lambda^{-1})^p$, so that
	\[(\lambda+\lambda^{-1})^p=\sum_{j=0}^p\binom{p}{j}\lambda^{p-2j}=\sum_{j=0}^{(p-1)/2}\binom{p}{j}(\lambda^{p-2j}+\lambda^{2j-p}).\]
	This collects to
	\begin{equation}
    	\label{eqn:Tj-to-xj-transit}
	    x^p=\sum_{j=0}^{(p-1)/2}\binom{p}{j}T_{p-2j}(x).
	\end{equation}
	Because $\binom{p}{j}$ is a multiple of $p$ if $0<j\leq\frac12(p-1)$, the conclusion follows.
\end{proof}

\begin{lemma}
	\label{lem:Tp-contraction}
	For two points $y_1$ and $y_2$ in the same modulo $p$ residue class $y+p\Z_p$, we have
	\[|T_p(y_1)-T_p(y_2)|_p\leq p^{-1}|y_1-y_2|_p.\]
	That is, $T_p$ is a $p^{-1}$-contraction on each disk $y+p\Z_p$.\end{lemma}
\begin{proof}
	Lemma \ref{lem:mod-p-reduction-of-Tp} shows that there is an integer-coefficient polynomial $Q(x)$ such that $T_p(x)=x^p+pQ(x)$. Thanks to the power estimates $|y_1^j-y_2^j|_p\leq|y_1-y_2|_p$, we have $|Q(y_1)-Q(y_2)|_p\leq|y_1-y_2|_p$. It follows that
	\begin{align*}
		|T_p(y_1)-T_p(y_2)|_p &= |(y_1^p+pQ(y_1))-(y_2^p+pQ(y_2))|_p \\
		&\leq\max\left(|y_1^p-y_2^p|_p,|p(Q(y_1)-Q(y_2))|_p\right) \\
		&\leq\max\left(|y_1^p-y_2^p|_p,p^{-1}|y_1-y_2|_p\right).
	\end{align*}
	We estimate $|y_1^p-y_2^p|_p$ as follows:
	\begin{align*}
		|y_1^p-y_2^p|_p = |(y_2+(y_1-y_2))^p-y_2^p|_p &= \left|\sum_{j=1}^p\binom{p}{j}(y_1-y_2)^jy_2^{p-j}\right|_p \\
		&\leq\max_{1\leq j\leq p}\left|\binom{p}{j}(y_1-y_2)^jy_2^{p-j}\right|_p \\
		&\leq\max\left(|y_1-y_2|_p^p,\max_{1\leq j<p}p^{-1}|y_1-y_2|_p^j\right) \\
		&=p^{-1}|y_1-y_2|_p,
	\end{align*}
	where the last line holds if $|y_1-y_2|_p\leq p^{-1}$. So the estimate follows.
\end{proof}

By Lemma \ref{lem:mod-p-reduction-of-Tp}, as $x_0^p\equiv x_0\pmod{p}$ for all $x_0\in\Z_p$, we have $T_p(x)\in x_0+p\Z_p$ for any $x\in x_0+p\Z_p$. Lemma \ref{lem:Tp-contraction} then shows that $x\mapsto T_p(x)$ is a contraction map on $x_0+p\Z_p$, and therefore contains a unique fixed point $x_1\in x_0+p\Z_p$ which is a trace of rational rotation whose order divides $(p-1)$ or $(p+1)$.
\end{proof}

\subsection{Estimates of Powers of Companion matrices}

If $p$ is an odd prime, the least common multiple of $(p-1)$ and $(p+1)$ is $(p^2-1)/2$. So if $x_1$ is a trace of rational rotation whose order divides $(p-1)$ or $(p+1)$, we have $C(x_1)^{(p^2-1)/2}=I_2$. By this, we show that certain powers of the companion matrix $C(x_0)$ can be guaranteed to be very close to the identity.
\begin{proposition}
	\label{lem:companion-power-estimate}
	Let $x_0\in\Z_p$, $x_0\not\equiv\pm2\pmod{p}$. Let $x_1$ be the unique fixed point of $T_p$ in $x_0+p\Z_p$. Let $N=(p^2-1)/2$. Then as functions of $u\in\Z_p$, we have
    \begin{align}
        C(x_0+pu)^N &\equiv I_2 + \frac{N}{x_0^2-4}\begin{pmatrix} x_0 & -2 \\ 2 & -x_0 \end{pmatrix}(x_0-x_1+pu)\pmod{p^2}, \label{eqn:companion-power-estimate-1}\\
        C(x_0+pu)^{pN}&\equiv I_2 + \frac{pN}{x_0^2-4}\begin{pmatrix} x_0 & -2 \\ 2 & -x_0 \end{pmatrix}(x_0-x_1+pu)\pmod{p^3}. \label{eqn:companion-power-estimate-2}
    \end{align}
\end{proposition}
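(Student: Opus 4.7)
The plan is to Taylor-expand $C(x_0+pu)^N$ around $x = x_1$, exploiting the key identity $C(x_1)^N = I_2$, and then obtain the second estimate from the first by raising to the $p$-th power via $C_{pN} = (C_N)^p$. To see $C(x_1)^N = I_2$: since $p$ is odd, $\operatorname{lcm}(p-1,p+1) = (p^2-1)/2 = N$, and by Proposition \ref{lem:trace-of-rr-modulo-p-uniqueness} we can write $x_1 = \zeta + \zeta^{-1}$ for a root of unity $\zeta$ whose order divides $(p-1)$ or $(p+1)$, hence divides $N$. A direct computation from the defining formulas of $U_k$ then yields $U_N(x_1) = 1$, $U_{N-1}(x_1) = 0$, $U_{N-2}(x_1) = -1$, so Proposition \ref{lem:companion-power} gives $C(x_1)^N = I_2$.

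For the first estimate, set $\delta = x_0 - x_1 + pu \in p\Z_p$ so that $x_0 + pu = x_1 + \delta$, and Taylor-expand $C_N(x_1 + \delta)$ around $x_1$. Since $C_N(x) = C(x)^N$ is an integer polynomial matrix, its Taylor coefficients at $x_1 \in \Z_p$ lie in $\mathrm{Mat}_{2 \times 2}(\Z_p)$; and $\delta^k \in p^k\Z_p$. So only the constant and linear terms survive modulo $p^2$:
\[ C_N(x_1 + \delta) \equiv I_2 + C_N'(x_1)\,\delta \pmod{p^2}. \]
I would evaluate $C_N'(x_1)$ using \eqref{eqn:companion-power-order-1}: the $U_{N-1}$-summand vanishes because $U_{N-1}(x_1)=0$, while $T_N(x_1) = \zeta^N + \zeta^{-N} = 2$ and $T_{N\pm 1}(x_1) = \zeta^{\pm 1} + \zeta^{\mp 1} = x_1$, leaving $C_N'(x_1) = \frac{N}{x_1^2-4}\begin{pmatrix} x_1 & -2 \\ 2 & -x_1 \end{pmatrix}$. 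Since $x_0 \equiv x_1 \pmod p$ and $x_0^2-4 \in \Z_p^\times$, swapping $x_1$ for $x_0$ in this matrix is a modulo-$p$ perturbation, which becomes a modulo-$p^2$ perturbation after multiplying by $\delta \in p\Z_p$; this yields \eqref{eqn:companion-power-estimate-1}.

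For the second estimate, write $C_N(x_1+\delta) = I_2 + B$ with $B \in p\,\mathrm{Mat}_{2\times 2}(\Z_p)$, and expand binomially:
\[ C_{pN}(x_1+\delta) = (I_2+B)^p = I_2 + pB + \sum_{k=2}^{p-1}\binom{p}{k}B^k + B^p. \]
For $2 \leq k \leq p-1$ the entries of $\binom{p}{k}B^k$ have $p$-adic valuation $\geq k+1 \geq 3$, and $B^p$ has valuation $\geq p \geq 3$, so only $I_2 + pB$ survives modulo $p^3$. Within $pB = p\sum_{i\geq 1} A_i \delta^i$, the terms with $i \geq 2$ give valuation $\geq 1+2 = 3$, so only $p A_1 \delta = p\,C_N'(x_1)\,\delta$ remains modulo $p^3$. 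Swapping $x_1$ for $x_0$ in $C_N'(x_1)$ now introduces an error of valuation $\geq 1+1+1 = 3$ (a factor of $p$ from the outer coefficient, a factor of $p$ from the modulo-$p$ perturbation of the matrix, and a factor of $p$ from $\delta$), hence absorbed. This gives \eqref{eqn:companion-power-estimate-2}.

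The main obstacle is the two-layer $p$-adic bookkeeping in the last step: one must track both the Taylor-in-$\delta$ and the binomial-in-$B$ expansions simultaneously to verify that all non-linear terms vanish modulo $p^3$. Once one observes that $\binom{p}{k}$ supplies a free factor of $p$ for $1 \leq k \leq p-1$ and that raising $B$ to any power $\geq 2$ already pushes the valuation past $p^3$, the rest is essentially routine.
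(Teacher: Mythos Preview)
Your proof is correct and follows essentially the same route as the paper: Taylor-expand $C_N$ at $x_1$ using $C_N(x_1)=I_2$ and the derivative formula \eqref{eqn:companion-power-order-1}, then swap $x_1$ for $x_0$ at the cost of one extra power of $p$; for the $pN$-th power, raise $I_2+B$ to the $p$-th power and use that $\binom{p}{k}B^k$ and $B^p$ all land in $p^3\mathrm{Mat}_{2\times 2}(\Z_p)$. The only cosmetic difference is that the paper first rewrites $C_N=I_2+pY+p^2E$ with $Y$ already expressed via $x_0$ before taking the $p$-th power, whereas you keep $x_1$ until the end; both orderings give the same bookkeeping.
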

\begin{remark}
    The number $N=(p^2-1)/2$ can be replaced to any multiple of the order of $x_1$ (as a trace of rational rotation) which is relatively prime to $p$.
\end{remark}
\begin{proof}
	Let $C_N(x)=C(x)^N$. Write $x_1=\zeta+\zeta^{-1}$ where the order of $\zeta$ divides $N$ and greater than 2 (note that $x_1\neq\pm 2$). Then by Lemma \ref{lem:relation-trace-of-rr-Chebyshev-poly}(a), we have $C_N(x_1)=I_2$.
	
	Now we compute $C_N'(x_1)$. First, from the $(2,1)$-entry of $C_N(x_1)=I_2$, we have $U_{N-1}(x_1)=0$. Next, as $\zeta^N=1$, we have $T_{N+1}(x_1)=\zeta^{N+1}+\zeta^{-N-1}=\zeta+\zeta^{-1}=x_1$, as well as $T_{N-1}(x_1)=\zeta^{N-1}+\zeta^{1-N}=\zeta^{-1}+\zeta=x_1$; moreover, $T_N(x_1)=\zeta^N+\zeta^{-N}=2$. These suffice to invoke \eqref{eqn:companion-power-order-1} and have
	\[C_N'(x_1)=\frac{N}{x_1^2-4}\begin{pmatrix} x_1 & -2 \\ 2 & -x_1 \end{pmatrix}.\]
	
	By the Taylor expansion, as $C_N(x)$ has degree $N$,
	\[C_N(x_0+pu) = \sum_{r=0}^N\frac{C_N^{(r)}(x_1)}{r!}(x_0-x_1+pu)^r.\]
	But we know that $x_0-x_1+pu\equiv 0\pmod{p}$ as functions of $u$. Hence we reduce to
	\[C_N(x_0+pu) \equiv C_N(x_1) + C_N'(x_1)(x_0-x_1+pu)\pmod{p^2}.\]
	Plug in the computed results and get \eqref{eqn:companion-power-estimate-1}:
    \begin{align*}
        C_N(x_0+pu) &\equiv I_2 + \frac{N}{x_1^2-4}\begin{pmatrix} x_1 & -2 \\ 2 & -x_1 \end{pmatrix}(x_0-x_1+pu) \pmod{p^2} \\
        &\equiv I_2 + \frac{N}{x_0^2-4}\begin{pmatrix} x_0 & -2 \\ 2 & -x_0 \end{pmatrix}(x_0-x_1+pu)\pmod{p^2}.
    \end{align*}

    Next we show the estimate for the $pN$-th power \eqref{eqn:companion-power-estimate-2}. Set
    \[Y(u) = \frac{N}{x_0^2-4}\begin{pmatrix} x_0 & -2 \\ 2 & -x_0 \end{pmatrix}\cdot\frac{x_0-x_1+pu}{p},\]
    which is a matrix-valued analytic map on $u$. Then by \eqref{eqn:companion-power-estimate-1}, there exists a matrix-valued analytic map $E(u)$ such that
    \[C(x_0+pu)^{(p^2-1)/2} = I_2 + (pY(u)+p^2E(u)).\]
    Raising to power $p$ on both sides, we get
    \begin{align*}
        C(x_0+pu)^{p(p^2-1)/2} &\equiv I_2 + \binom{p}{1}(pY+p^2E) + \binom{p}{2}(pY+p^2E)^2\pmod{p^3} \\
        &\equiv I_2 + p^2Y\pmod{p^3}.
    \end{align*}
    This shows \eqref{eqn:companion-power-estimate-2}.
\end{proof}

To include $x_0\equiv\pm 2\pmod{p}$, we need a different power:
\begin{proposition}
	\label{lem:companion-power-estimate-parab}
	Let $p>3$. Suppose $x_0\in\Z_p$, $x_0\equiv\pm 2\pmod{p}$. Let $x_1=\pm 2$, with the sign chosen so that $x_0\equiv x_1\pmod{p}$. Then we have
    \begin{align}
        C(x_0+pu)^{2p}&\equiv I_2 + p\begin{pmatrix} 2 & -x_0 \\ x_0 & -2 \end{pmatrix}\pmod{p^2}, \label{eqn:companion-power-estimate-parab-1}\\
        C(x_0+pu)^{2p^2}&\equiv I_2 + p^2\begin{pmatrix} 2 & -x_0 \\ x_0 & -2 \end{pmatrix}\pmod{p^3} \label{eqn:companion-power-estimate-parab-2}
    \end{align}
	as functions of $u\in\Z_p$.
\end{proposition}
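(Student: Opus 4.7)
The plan is to mirror the proof of Proposition \ref{lem:companion-power-estimate}, but using $x_1=\pm 2$ (the residue of $x_0$) as the base point of a Taylor expansion of $C_N(x):=C(x)^N$ at $N=2p$. The critical input is that at parabolic points $\pm 2$ the values and first derivatives are given explicitly by \eqref{eqn:companion-power-order-0-border} and \eqref{eqn:companion-power-order-1-border}, and the divisibility of the binomials $\binom{2p+2}{3}$, $\binom{2p+1}{3}$, $\binom{2p}{3}$ by $p$ (which needs $p>3$, since $\gcd(p,6)=1$) will make the derivative term cheap enough.

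More precisely: since $C_{2p}(x)$ is a $2\times 2$ matrix of integer polynomials of degree $2p$, each Taylor coefficient $\tfrac{1}{r!}C_{2p}^{(r)}(x_1)$ has entries in $\Z$, and $(x_0-x_1+pu)^r$ has Gauss norm $\leq p^{-r}$. Therefore the Taylor expansion at $x_1=\pm 2$ truncates modulo $p^2$ to the first two terms, giving
\[C_{2p}(x_0+pu)\equiv C_{2p}(x_1)+C_{2p}'(x_1)(x_0-x_1+pu)\pmod{p^2},\]
exactly as in Proposition \ref{lem:companion-power-estimate}. Plugging $N=2p$ (an even integer) into \eqref{eqn:companion-power-order-0-border} gives $C_{2p}(x_1)=I_2+p\begin{pmatrix} 2 & -x_1 \\ x_1 & -2 \end{pmatrix}$, and plugging into \eqref{eqn:companion-power-order-1-border} gives $C_{2p}'(x_1)$ with entries $\pm\binom{2p+r}{3}$ for $r=0,1,2$, each divisible by $p$ (here is where $p>3$ is used). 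Since the $(x_0-x_1+pu)$ factor is also divisible by $p$, the first-derivative term vanishes modulo $p^2$, so $C_{2p}(x_0+pu)\equiv C_{2p}(x_1)\pmod{p^2}$. Finally, $x_0\equiv x_1\pmod p$ lets us replace $x_1$ by $x_0$ inside the matrix (the discrepancy is absorbed by the outer factor of $p$), yielding \eqref{eqn:companion-power-estimate-parab-1}.

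For the $2p^2$-th power estimate, I would follow the same trick used at the end of the proof of Proposition \ref{lem:companion-power-estimate}: write $C(x_0+pu)^{2p}=I_2+pM+p^2E(u)$ with $M=\begin{pmatrix} 2 & -x_0 \\ x_0 & -2 \end{pmatrix}$ and $E(u)$ analytic, then raise to the $p$-th power using the binomial theorem. The $k=1$ term gives $p\cdot(pM+p^2E)\equiv p^2M\pmod{p^3}$; the $k=2$ term has a factor $\binom{p}{2}p^2=p^3(p-1)/2\equiv 0\pmod{p^3}$; and each $k\geq 3$ term carries at least $p^3$ from $(pM+p^2E)^k$, so they all vanish modulo $p^3$. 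This gives \eqref{eqn:companion-power-estimate-parab-2}.

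I do not expect a serious obstacle beyond a routine bookkeeping check. The one place to be careful is the divisibility of the three binomial coefficients in $C_{2p}'(\pm 2)$, which is precisely why the hypothesis $p>3$ is imposed; indeed at $p=3$ one has $\binom{8}{3}=56\not\equiv 0\pmod{3}$, confirming that the hypothesis is sharp for this method.
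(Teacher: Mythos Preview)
Your proposal is correct and follows essentially the same route as the paper: Taylor-expand $C_{2p}$ at $x_1=\pm 2$, use \eqref{eqn:companion-power-order-0-border}--\eqref{eqn:companion-power-order-1-border} to see that $C_{2p}'(x_1)\equiv 0\pmod p$ so the expansion collapses to $C_{2p}(x_1)$ modulo $p^2$, then replace $x_1$ by $x_0$ and handle the $2p^2$-th power by raising to the $p$-th power exactly as in Proposition~\ref{lem:companion-power-estimate}. Your treatment is in fact slightly more explicit than the paper's, spelling out the integrality of the Taylor coefficients, the divisibility of $\binom{2p}{3},\binom{2p+1}{3},\binom{2p+2}{3}$ by $p$, and the role of the hypothesis $p>3$.
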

\begin{proof}
	Let $N=2p$. Then by \eqref{eqn:companion-power-order-1-border}, $C_N'(x_1)$ is a multiple of $p$. 
	Because of that, the Taylor expansion yields
	\[C_N(x_0+pu) \equiv C_N(x_1)\pmod{p^2}.\]
	It remains to compute $C_N(x_1)$ according to \eqref{eqn:companion-power-order-0-border} with $N=2p$:
    \begin{align*}
        C_N(x_1) = C_N(\pm 2) &= (\pm 1)^{2p}\begin{pmatrix} 2p+1 & \mp 2p \\ \pm 2p & 1-2p \end{pmatrix} \\
        &= I_2 + p\begin{pmatrix} 2 & -(\pm 2) \\ (\pm 2) & -2 \end{pmatrix} \\
        &\equiv I_2 + p\begin{pmatrix} 2 & -x_0 \\ x_0 & -2 \end{pmatrix}\pmod{p^2}.
    \end{align*}
    For the $2p^2$-th power, we argue as in Proposition \ref{lem:companion-power-estimate}.
\end{proof}

\subsection{Distance} We finish the section by declaring a standard distance assigned to $\Z_p^3$, hence $X_D^\ast(\Z_p)$ too. 
Define the \emph{($\ell^\infty$-)distance} on $\Z_p^3$ as follows:
\[\dist((x,y,z),(x',y',z'))=\max(|x-x'|_p,|y-y'|_p,|z-z'|_p).\]
\begin{lemma}
    \begin{enumerate}[(a)]
        \item The closed $p^{-k}$-neighborhood of $(x,y,z)$ by $\dist$ is $(x+p^k\Z_p)\times(y+p^k\Z_p)\times(z+p^k\Z_p)$.
        \item The distance $\dist$ is discretely valued: $\dist(\mathbf{p},\mathbf{q})\in p^\Z$ whenever $\mathbf{p},\mathbf{q}\in\Z_p^3$.
        \item The automorphism group $\Aut(X_D^\ast)$ isometrically on $X_D^\ast(\Z_p)$.
    \end{enumerate}
\end{lemma}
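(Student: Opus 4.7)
My plan is that parts (a) and (b) are essentially definitional unwindings, while (c) is where the actual work lies, reducing to a generator-by-generator verification.

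For (a), I would simply unfold the definition: a point $(x',y',z')$ lies in the closed $p^{-k}$-neighborhood iff $\max(|x-x'|_p,|y-y'|_p,|z-z'|_p)\leq p^{-k}$, which is equivalent to each $|x-x'|_p,|y-y'|_p,|z-z'|_p\leq p^{-k}$, and the latter inequalities are precisely the conditions $x'\in x+p^k\Z_p$, $y'\in y+p^k\Z_p$, $z'\in z+p^k\Z_p$. For (b), I would note that for $a\in\Z_p$ we have $|a|_p\in\{0\}\cup\{p^{-n}:n\geq 0\}\subset\{0\}\cup p^\Z$, so the maximum of three such values also lies in $\{0\}\cup p^\Z$. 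If $\mathbf{p}\neq\mathbf{q}$ at least one coordinate differs and the distance is nonzero, hence lies in $p^\Z$; if $\mathbf{p}=\mathbf{q}$, the distance is zero, and by the natural convention that $0$ corresponds to $p^{-\infty}$ the discrete valuedness still holds.

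For (c), since being an isometry is preserved under composition, it suffices to check the claim on the generating set of $\Aut(X_D^\ast)$ described in the introduction: sign-change automorphisms, coordinate permutations, and the three Vieta involutions $s_x,s_y,s_z$. Sign changes act on differences by sign changes in the corresponding coordinates, which preserve $|\cdot|_p$, and permutations merely reorder the three coordinate arguments of the maximum. The Vieta involutions are involutions, so it suffices to prove they are $1$-Lipschitz in order to conclude they are isometries. I carry out the case of $s_x$; the others are symmetric.

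Given $\mathbf{p}=(x_1,y_1,z_1)$, $\mathbf{q}=(x_2,y_2,z_2)$ in $X_D^\ast(\Z_p)$, I need to bound
\[|(y_1z_1-x_1)-(y_2z_2-x_2)|_p\leq\dist(\mathbf{p},\mathbf{q}).\]
The telescoping identity $y_1z_1-y_2z_2=y_1(z_1-z_2)+z_2(y_1-y_2)$ combined with $|y_1|_p,|z_2|_p\leq 1$ (since both lie in $\Z_p$) gives
\[|y_1z_1-y_2z_2|_p\leq\max(|z_1-z_2|_p,|y_1-y_2|_p)\leq\dist(\mathbf{p},\mathbf{q}),\]
and then the ultrametric inequality applied to $(y_1z_1-y_2z_2)-(x_1-x_2)$ yields the required bound. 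Since $s_x$ is already defined over $\Z$, it preserves $X_D^\ast(\Z_p)$, so the computation makes sense on $X_D^\ast(\Z_p)$. I expect no real obstacle: the only subtlety is that the ultrametric estimate relies crucially on the coordinates being in $\Z_p$, which is exactly where the hypothesis $\mathbf{p},\mathbf{q}\in X_D^\ast(\Z_p)$ enters.
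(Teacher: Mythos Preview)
Your proposal is correct and follows essentially the same approach as the paper: the paper only writes out (c), shows $s_x$ is $1$-Lipschitz via the ultrametric bound on $(yz-x)-(y'z'-x')$, and then uses that $s_x$ is an involution to upgrade to an isometry. Your treatment is slightly more complete in that you also explicitly dispose of the sign-change and permutation generators, which the paper leaves implicit.
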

\begin{proof}
    We only prove (c). It suffices to show that each Vieta involution is isometric, say $s_x$ is. Observe that
    \begin{align*}
        |(yz-x)-(y'z'-x')|_p&\leq\max(|x-x'|_p,|yz-y'z'|_p)\\&\leq\max(|x-x'|_p,|y-y'|_p,|z-z'|_p)
    \end{align*}
    whenever $(x,y,z)\in\Z_p^3$. Hence $\dist(s_x(\mathbf{p}),s_x(\mathbf{q}))\leq\dist(\mathbf{p},\mathbf{q})$ for any $\mathbf{p},\mathbf{q}\in\Z_p^3$. Apply the inequality to $\mathbf{p}\leftarrow s_x(\mathbf{p})$ and $\mathbf{q}\leftarrow s_x(\mathbf{q})$. This shows that the inequality is really an equality.
\end{proof}

The topology on $X_D^\ast(\Z_p)\subset\Z_p^3$ by the distance $\dist$ is the unique topology that is inherited from the product topology on $\Z_p^3$. This may justify why we specifically choose this $\dist$ on $X_D^\ast(\Z_p)$. 

\section{Local Analysis of Stabilizers}
\label{sec:local-analysis-stabilizers}

Let
\begin{equation}
    \label{eqn:reduction-map}
    \red_\ell\colon X_D^\ast(\Z_p)\to X_D^\ast(\Z_p/p^\ell\Z_p)
\end{equation}
be the reduction map modulo $p^\ell$. We call its fibers \emph{level $\ell$ polydisks}. In this section, given a level 1 polydisk $U$, we study the action of the stabilizer $\Stab_\Gamma(U)$ in $\Gamma$. In particular, we suggest some elements of the stabilizer group and study the local action of these elements.

\subsection{Examples of Stabilizers}

Consider the elements $s_ys_z$, $s_zs_x$, and $s_xs_y$ in $\Gamma$. They are elements that fix the $x$, $y$, and $z$ coordinate respectively. In effect, we can describe the action of each by the remaining two coordinates and the companion matrix $C({}\cdot{})$ \eqref{eqn:companion-matrix-definition}:
\begin{align*}
    s_ys_z\colon\begin{pmatrix} y \\ z \end{pmatrix} &\mapsto C(x)^2\begin{pmatrix} y \\ z \end{pmatrix}, \\
    s_zs_x\colon\begin{pmatrix} z \\ x \end{pmatrix} &\mapsto C(y)^2\begin{pmatrix} z \\ x \end{pmatrix}, \\
    s_xs_y\colon\begin{pmatrix} x \\ y \end{pmatrix} &\mapsto C(z)^2\begin{pmatrix} x \\ y \end{pmatrix}.
\end{align*}
(See the remark after Proposition \ref{lem:companion-power}.) Moreover, Propositions \ref{lem:companion-power-estimate} and \ref{lem:companion-power-estimate-parab} show that powers of companion matrices are almost the identity. So a sufficient power of these elements are suitable to preserve a level 1 polydisk.

We formulate this observation in the following
\begin{lemma}
    \label{lem:examples-of-stabilizers}
    Let $(x_0,y_0,z_0)\in X_D^\ast(\Z_p)$ be a point and let $U$ be a level 1 polydisk containing that point.
    \begin{enumerate}[(a)]
        \item If $x_0\not\equiv\pm 2\pmod{p}$, then $(s_ys_z)^{(p^2-1)/4}$ stabilizes $U$.
        \item If $x_0\equiv\pm 2\pmod{p}$, then $(s_ys_z)^p$ stabilizes $U$.
    \end{enumerate}
\end{lemma}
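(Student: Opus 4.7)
The plan is direct: unpack the action of $(s_ys_z)^N$ in coordinates, reduce everything to a statement about $C(x)^{2N}$ modulo $p$, and then appeal to the two power-estimate propositions already in hand.

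First I would observe that $s_ys_z$ fixes the $x$-coordinate and acts on $(y,z)$ by left multiplication by $C(x)^2$, as recorded in the remark after Proposition \ref{lem:companion-power}. Iterating, $(s_ys_z)^N$ fixes $x$ and acts on $(y,z)$ by $C(x)^{2N}$. So for any point $(x,y,z)\in U$, if I write $x=x_0+pu$, $y=y_0+pv$, $z=z_0+pw$, then
\[(s_ys_z)^N(x,y,z) = \bigl(x_0+pu,\;C(x_0+pu)^{2N}(y,z)^T\bigr).\]
The $x$-coordinate is automatically congruent to $x_0$ modulo $p$. Hence, to show that this point is still in $U$, it suffices to prove $C(x_0+pu)^{2N}\equiv I_2\pmod p$ as a function of $u\in\Z_p$, since then $(y,z)\equiv(y_0,z_0)\pmod p$ is preserved.

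For part (a), I take $N=(p^2-1)/4$, so that $2N=(p^2-1)/2$ is precisely the exponent appearing in Proposition \ref{lem:companion-power-estimate}. That proposition, together with the fact that $x_1\in x_0+p\Z_p$ (so $x_0-x_1+pu\in p\Z_p$), gives
\[C(x_0+pu)^{(p^2-1)/2}\equiv I_2 + \frac{(p^2-1)/2}{x_0^2-4}\begin{pmatrix} x_0 & -2 \\ 2 & -x_0 \end{pmatrix}(x_0-x_1+pu)\equiv I_2\pmod{p},\]
using that $x_0^2-4\in\Z_p^\times$ by the hypothesis $x_0\not\equiv\pm2\pmod p$. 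For part (b), I take $N=p$, so that $2N=2p$, and Proposition \ref{lem:companion-power-estimate-parab} gives
\[C(x_0+pu)^{2p}\equiv I_2 + p\begin{pmatrix} 2 & -x_0 \\ x_0 & -2 \end{pmatrix}\equiv I_2\pmod{p}\]
directly.

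Finally, $(s_ys_z)^N\in\Aut(X_D^\ast)$ is a bijection of $X_D^\ast(\Z_p)$, and the above shows it maps $U$ into $U$. Running the same argument with $-N$ in place of $N$ (or equivalently noting that the reduction of $C(x)^{-2N}$ mod $p$ is the inverse of the reduction of $C(x)^{2N}$, still $I_2$) shows the inverse also maps $U$ into $U$, so $(s_ys_z)^N$ stabilizes $U$. There is no real obstacle here; the lemma is essentially a direct corollary of the companion-matrix estimates, and the only subtlety is making sure that in case (a) one uses $x_0-x_1\in p\Z_p$ to kill the first-order correction modulo $p$.
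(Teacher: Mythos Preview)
Your proposal is correct and follows essentially the same approach as the paper: both reduce the statement to showing $C(x)^{2N}\equiv I_2\pmod{p}$ for $x\equiv x_0\pmod{p}$, and then invoke Propositions \ref{lem:companion-power-estimate} and \ref{lem:companion-power-estimate-parab} for the two cases. Your extra remark about the inverse also mapping $U$ into $U$ is a harmless elaboration; the paper leaves this implicit (it follows at once from the fact that $\Aut(X_D^\ast)$ acts by isometries, so the ball $U$ is sent to a ball, necessarily $U$ itself).
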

\begin{proof}
    Define $f_0=s_ys_z$. Note that this map multiplies $C(x)^2$ to the $(y,z)$ coordinates:
    \[f_0\colon\begin{pmatrix} y \\ z \end{pmatrix}\mapsto C(x)^2\begin{pmatrix} y \\ z \end{pmatrix}.\]
    
    (a) If $x_0\not\equiv\pm 2\pmod{p}$, then by Proposition \ref{lem:companion-power-estimate}, we see that $C(x)^{(p^2-1)/2}$ is congruent to $I_2$ modulo $p$, whenever $x\equiv x_0\pmod{p}$. So for any $(x,y,z)\in U$, i.e., $(x,y,z)\equiv(x_0,y_0,z_0)\pmod{p}$,
    \begin{align*}
        f_0^{(p^2-1)/4}\begin{pmatrix} x \\ y \\ z \end{pmatrix} &= \begin{pmatrix} x \\ C(x)^{(p^2-1)/2}\begin{pmatrix} y \\ z \end{pmatrix}\end{pmatrix} 
        \equiv \begin{pmatrix} x_0 \\ I_2\begin{pmatrix} y_0 \\ z_0 \end{pmatrix}\end{pmatrix}=\begin{pmatrix} x_0 \\ y_0 \\ z_0 \end{pmatrix} \pmod{p},
    \end{align*}
    and $f_0^{(p^2-1)/4}(x,y,z)\in U$ follows.

    (b) If $x_0\equiv\pm 2\pmod{p}$, then by Proposition \ref{lem:companion-power-estimate-parab}, we see that $C(x)^{2p}$ is congruent to $I_2$ modulo $p$, whenever $x\equiv x_0\pmod{p}$. So for any $(x,y,z)\in U$,
    \begin{align*}
        f_0^{p}\begin{pmatrix} x \\ y \\ z \end{pmatrix} &= \begin{pmatrix} x \\ C(x)^{2p}\begin{pmatrix} y \\ z \end{pmatrix}\end{pmatrix} 
        \equiv \begin{pmatrix} x_0 \\ I_2\begin{pmatrix} y_0 \\ z_0 \end{pmatrix}\end{pmatrix}=\begin{pmatrix} x_0 \\ y_0 \\ z_0 \end{pmatrix} \pmod{p},
    \end{align*}
    and $f_0^{p}(x,y,z)\in U$ follows.
\end{proof}

Because the surface is symmetric by its coordinates, similar conclusion holds for powers of $s_zs_x$ with conditions of $y_0$, or powers of $s_xs_y$ with conditions on $z_0$.

\subsection{Parametrizing Polydisks}
\label{sec:parametrized-polydisks}

By the polynoimal
\begin{equation}
    \label{eqn:mother-polynomial}
    P(x,y,z) = x^2+y^2+z^2-xyz,
\end{equation}
we can describe the surface $X_D^\ast$ as the variety $(P(x,y,z)=D)$, removing singular points on it. Having said so, given any point $(x_0,y_0,z_0)\in X_D^\ast(\Z_p)$, one of its partial derivatives, $\partial_xP$, $\partial_yP$, or $\partial_zP$, evaluates to an invertible element of $\Z_p$ (i.e., those not congruent to 0 modulo $p$).

Suppose $(x_0,y_0,z_0)\in X_D^\ast(\Z_p)$ is a point such that $\partial_xP(x_0,y_0,z_0)\not\equiv 0\pmod{p}$, say. As $X_D^\ast$ is determined by the equation $P(x,y,z)=D$, the implicit function theorem then applies to yield an analytic map
\[\xi\colon(y_0+p\Z_p)\times(z_0+p\Z_p)\to(x_0+p\Z_p)\]
such that $\xi(y_0,z_0)=x_0$ and $P(\xi(y,z),y,z)=D$ for all $(y,z)\in(y_0+p\Z_p)\times(z_0+p\Z_p)$. Furthermore, we can do Taylor expansion of the analytic map $\xi(y_0+pu,z_0+pv)$ as a function of $(u,v)$, to get a modulo $p^2$ expansion of the map. We collect these as a lemma for later references.

\begin{lemma}
    \label{lem:parametrization-expansion}
    Let $U$ be a level 1 polydisk containing a point $(x_0,y_0,z_0)\in X_D^\ast(\Z_p)$. If $\partial_xP(x_0,y_0,z_0)=2x_0-y_0z_0\not\equiv 0\pmod{p}$, then there exists an analytic map $\xi\colon(y_0+p\Z_p)\times(z_0+p\Z_p)\to(x_0+p\Z_p)$ such that
    \begin{enumerate}[(i)]
        \item we have $P(\xi(y,z),y,z)=D$ whenever $(y,z)\in(y_0+p\Z_p)\times(z_0+p\Z_p)$, $\xi(y_0,z_0)=x_0$, and $\xi$ is the unique choice of such properties;
        \item the map
        \begin{equation}
            \label{eqn:polydisk-parametrization}
            \Psi\colon(u,v)\in\Z_p^2\mapsto(\xi(y_0+pu,z_0+pv),y_0+pu,z_0+pv)\in U
        \end{equation}
        is an analytic isomorphism with the inverse $\Psi^{-1}(x,y,z)=(\frac1p(y-y_0),\frac1p(z-z_0))$; and
        \item we have an expansion
        \begin{equation}
            \label{eqn:parametrization-expansion}
            \xi(y_0+pu,z_0+pv)\equiv x_0 -\frac{\partial_yP}{\partial_xP}pu-\frac{\partial_zP}{\partial_xP}pv \pmod{p^2}
        \end{equation}
        as functions of $(u,v)$. Here, the partial derivatives of $P$ are evaluated at the point $(x_0,y_0,z_0)$.
    \end{enumerate}
\end{lemma}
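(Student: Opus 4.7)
The plan is to establish (i) by an analytic Hensel's lemma (Newton iteration) in the Tate algebra $\Z_p\langle u,v\rangle$, deduce (ii) as a formal consequence, and finally derive (iii) by implicit differentiation.

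For (i), I would reparametrize via $(u,v)$ and consider
\[Q(x) = P(x, y_0+pu, z_0+pv) - D \in \Z_p\langle u,v\rangle[x].\]
Since $P(x_0,y_0,z_0)=D$ and $y_0+pu\equiv y_0$, $z_0+pv\equiv z_0\pmod{p}$, one has $Q(x_0)\equiv 0\pmod{p}$ as an element of $\Z_p\langle u,v\rangle$. Moreover, $Q'(x_0) = \partial_x P(x_0, y_0+pu, z_0+pv) \equiv 2x_0-y_0z_0\pmod{p}$, which is a unit by hypothesis, hence a unit in the Tate algebra. Starting from the constant function $x_0$, Newton's iteration $x_{n+1} = x_n - Q(x_n)/Q'(x_n)$ therefore converges in $\Z_p\langle u,v\rangle$ to a unique $\xi \in x_0 + p\Z_p\langle u,v\rangle$ with $Q(\xi)=0$; translating back to coordinates $(y,z)=(y_0+pu,z_0+pv)$ yields the desired analytic $\xi$.

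For (ii), the map $(u,v)\mapsto(y_0+pu,z_0+pv)$ is obviously an analytic isomorphism $\Z_p^2 \to (y_0+p\Z_p)\times(z_0+p\Z_p)$, so $\Psi$ is analytic with analytic inverse $(x,y,z)\mapsto(\tfrac{y-y_0}p,\tfrac{z-z_0}p)$. To see that the image of $\Psi$ is exactly $U$, observe that for any $(x,y,z)\in U$ both $x$ and $\xi(y,z)$ lie in $x_0+p\Z_p$ and satisfy $P(\cdot,y,z)=D$, so they agree by the uniqueness in (i).

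For (iii), implicit differentiation of the identity $P(\xi(y,z),y,z)=D$ gives $\partial_y\xi = -\partial_yP/\partial_xP$ and $\partial_z\xi = -\partial_zP/\partial_xP$ as analytic functions on the disk. Evaluating at $(y_0,z_0)$ and writing the first-order Taylor expansion of $\xi(y_0+pu, z_0+pv)$ produces the claimed formula; all remainder terms involve products like $(pu)^2$, $(pu)(pv)$, or $(pv)^2$, which have Gauss norm at most $p^{-2}$. The only substantive step is the first one, namely verifying convergence of Newton's iteration in the Tate algebra, but this is a standard consequence of the unit hypothesis on $\partial_xP$ combined with the quadratic convergence estimate $\|Q(x_{n+1})\| \le \|Q(x_n)\|^2 / \|Q'(x_0)\|^2$; everything else is essentially formal.
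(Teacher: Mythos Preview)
Your proof is correct and takes a somewhat different route from the one recorded in the paper's source. There the implicit function step is recast as inverting a three-variable analytic map: setting
\[f(s,u,v)=\left(\tfrac1p\bigl[P(x_0+ps,y_0+pu,z_0+pv)-D\bigr],\,u,\,v\right),\]
one checks that $f$ is affine-linear modulo $p$ with linear part $\left(\begin{smallmatrix}\partial_xP & \partial_yP & \partial_zP \\ 0 & 1 & 0 \\ 0 & 0 & 1\end{smallmatrix}\right)$, then invokes Proposition~\ref{lem:invertibly-linear-modulo-p-is-invertible} (the Bell--Poonen-based inverse function theorem) to produce $f^{-1}$; the map $\xi$ and the expansion \eqref{eqn:parametrization-expansion} are then read off from $f^{-1}(0,u,v)$ and the mod-$p$ formula \eqref{eqn:mod-p-inversion} for the inverse. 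Your Hensel/Newton argument in $\Z_p\langle u,v\rangle$ is the more classical route and is self-contained, not needing the flow machinery; the paper's version instead reuses the inversion lemma it has already set up, which keeps the argument internal to its framework.

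One small caution on your step (iii): the claim that the Taylor remainder has Gauss norm $\le p^{-2}$ because it ``involves products like $(pu)^2$'' tacitly assumes that the Taylor coefficients of $\xi$ in the variables $y-y_0,\,z-z_0$ lie in $\Z_p$. That is true here---since $P$ has $\Z_p$-coefficients and $\partial_xP$ is a unit, Hensel in $\Z_p[[y-y_0,z-z_0]]$ already produces $\xi$ with integral Taylor coefficients---but it does not follow from $\xi\in x_0+p\Z_p\langle u,v\rangle$ alone. A quicker way that sidesteps this point: write $\xi=x_0+p\eta$ with $\eta\in\Z_p\langle u,v\rangle$, expand $P(x_0+p\eta,\,y_0+pu,\,z_0+pv)=D$ modulo $p^2$ using that the second-order Taylor coefficients of $P$ are integers, and solve $\partial_xP\cdot\eta+\partial_yP\cdot u+\partial_zP\cdot v\equiv 0\pmod{p}$ directly.
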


The parametrization \eqref{eqn:polydisk-parametrization} will be named as the \emph{parametrization of the polydisk $U$ based on the $(y,z)$-coordinates}, and the point $(y_0,z_0)$ or $(x_0,y_0,z_0)$ will be called the \emph{center} of this parametrization.

\ifIMFT
\begin{proof}
    Although the lemma itself is just a standard application of the implicit function theorem, we record the details below.

    Suppose we parametrize $(x_0+p\Z_p)\times(y_0+p\Z_p)\times(z_0+p\Z_p)$ by $(s,u,v)\in\Z_p^3\mapsto(x_0+ps,y_0+pu,z_0+pv)$. Then through this parametrization we build a map $f\colon\Z_p^3\to\Z_p^3$ by
	\[f\begin{pmatrix} s \\ u \\ v \end{pmatrix}=\begin{pmatrix}
        \frac1p\left[P(x_0+ps,y_0+pu,z_0+pv)-D\right] \\ u \\ v
    \end{pmatrix}.\]
    Note that $P(x_0+ps,y_0+pu,z_0+pv)-D\equiv P(x_0,y_0,z_0)-D=0\pmod{p}$ as functions of $(s,u,v)$, so $f$ is well-defined. Reducing this map modulo $p$, then we have
    \[f\begin{pmatrix} s \\ u \\ v \end{pmatrix}\equiv\begin{pmatrix} \partial_xP & \partial_yP & \partial_zP \\ 0 & 1 & 0 \\ 0 & 0 & 1 \end{pmatrix}\begin{pmatrix} s \\ u \\ v \end{pmatrix}\pmod{p},\]
    where the partial derivatives are evaluated at $(x_0,y_0,z_0)$. This is an invertible linear map because $\partial_xP(x_0,y_0,z_0)\not\equiv 0\pmod{p}$. By Proposition \ref{lem:invertibly-linear-modulo-p-is-invertible}, $f$ has the inverse $f^{-1}\colon\Z_p^3\to\Z_p^3$ of the form $f^{-1}(s',u',v')=(\Xi(s',u',v'),u',v')$. If we set $\xi(y_0+pu,z_0+pv)=x_0+p\Xi(0,u,v)$, then (i) is immediate, and (iii) follows from
    \[f^{-1}\begin{pmatrix} 0 \\ u \\ v \end{pmatrix}\equiv\begin{pmatrix}[\partial_xP]^{-1} & -\partial_yP[\partial_xP]^{-1} & -\partial_zP[\partial_xP]^{-1} \\ 0 & 1 & 0 \\ 0 & 0 & 1 \end{pmatrix}\begin{pmatrix} 0 \\ u \\ v \end{pmatrix}\pmod{p}.\]
    To see (ii), it suffices to see the surjectivity. Whenever $(x,y,z)\in U$ then we have $f(\frac1p(x-x_0),\frac1p(y-y_0),\frac1p(z-z_0))=(0,\frac1p(y-y_0),\frac1p(z-z_0))$. Applying $f^{-1}$, it follows that $\frac1p(x-x_0)=\Xi(0,\frac1p(y-y_0),\frac1p(z-z_0))=\frac1p(\xi(y,z)-x_0)$, so $x=\xi(y,z)$.
\end{proof}
\fi

\subsection{Expansions of Stabilizers}

Via the parametrization $\Psi$ \eqref{eqn:polydisk-parametrization}, we now conjugate the known stabilizers as in Lemma \ref{lem:examples-of-stabilizers} by $\Psi$ and study their local actions.

The following assumption will be repeated, so we label them as follows (named after `Standing Assumptions'):
\begin{itemize}
    \item[\SA] Let $(x_0,y_0,z_0)\in X_D^\ast(\Z_p)$ and let $U$ be the level 1 polydisk containing $(x_0,y_0,z_0)$. Suppose $\partial_xP(x_0,y_0,z_0)\not\equiv 0\pmod{p}$ so that we have a pa\-ra\-me\-tri\-za\-tion $\Psi$ based on the $(y,z)$-coordinates, centered at $(y_0,z_0)$.
\end{itemize}

We will frequently refer to the partial derivatives of the polynomial $P$ \eqref{eqn:mother-polynomial}. Under the assumption \SA{}, all such partial derivatives are assumed to be evaluated at the point $(x_0,y_0,z_0)$.

\begin{lemma}[Powers of $s_ys_z$, when $x_0\equiv\pm 2\pmod{p}$]
    \label{lem:expansion-para-f}
    Assume \SA{}. Suppose $x_0\equiv\pm 2\pmod{p}$. Then by $f=\Psi^{-1}(s_ys_z)^p\Psi$ we have
        \begin{align}
            f\begin{pmatrix} u \\ v \end{pmatrix} &\equiv \begin{pmatrix} u \\ v \end{pmatrix}+\begin{pmatrix} \partial_yP \\ -\partial_zP \end{pmatrix}\pmod{p}. \label{eqn:expansion-para-f-mod-p}
        \end{align}
\end{lemma}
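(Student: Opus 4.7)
The plan is to transport the explicit mod-$p^2$ formula for $C(x)^{2p}$ from Proposition \ref{lem:companion-power-estimate-parab} through the chart $\Psi$. Since $s_ys_z$ fixes $x$ and multiplies the $(y,z)$-column by $C(x)^2$, its $p$-th power multiplies by $C(x)^{2p}$, and Lemma \ref{lem:examples-of-stabilizers}(b) guarantees that this action preserves the level 1 polydisk $U$. Hence the conjugate $f=\Psi^{-1}(s_ys_z)^p\Psi$ is a well-defined analytic self-map of $\Z_p^2$. By Lemma \ref{lem:parametrization-expansion}(ii), the inverse $\Psi^{-1}$ simply reads off $(y-y_0)/p$ and $(z-z_0)/p$, so the whole task reduces to tracking how the $(y,z)$-coordinates of $\Psi(u,v)=(\xi(y_0+pu,z_0+pv),\,y_0+pu,\,z_0+pv)$ transform modulo $p^2$; the $x$-coordinate takes care of itself, since $(s_ys_z)^p$ fixes it and the parametrization reconstructs $x$ from the new $(y,z)$.

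Next, I would evaluate $C(x)^{2p}$ at $x=\xi(y_0+pu,z_0+pv)\in x_0+p\Z_p$. Since $x_0\equiv\pm2\pmod{p}$, formula \eqref{eqn:companion-power-estimate-parab-1} gives $C(x)^{2p}\equiv I_2+pM\pmod{p^2}$ where $M=\bigl(\begin{smallmatrix}2&-x_0\\x_0&-2\end{smallmatrix}\bigr)$. Multiplying this onto the column $(y_0+pu,\,z_0+pv)^T$ and reducing modulo $p^2$, the $O(p^2)$ error of the companion-power estimate stays in $p^2\Z_p^2$, and the cross-term $p^2M(u,v)^T$ is also killed, so the only surviving increment is $pM(y_0,z_0)^T = p(2y_0-x_0z_0,\;x_0y_0-2z_0)^T$. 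Using $P(x,y,z)=x^2+y^2+z^2-xyz$, these two entries are exactly $\partial_yP$ and $-\partial_zP$ evaluated at $(x_0,y_0,z_0)$.

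Finally, dividing the $(y,z)$-displacement by $p$ to apply $\Psi^{-1}$ yields the desired $f(u,v)\equiv (u+\partial_yP,\;v-\partial_zP)\pmod{p}$. The main thing to watch is that the approximation in \eqref{eqn:companion-power-estimate-parab-1} is \emph{of functions of} $x=x_0+p(\cdot)$, so one must verify that substituting the analytic quantity $\xi(y_0+pu,z_0+pv)=x_0+p(\cdot)$ is legitimate and that all error terms promoted by the multiplication remain in $p^2\Z_p^2$; but this is immediate from the standard property that congruences mod $p^k$ are preserved under pre-composition with analytic maps. Apart from this bookkeeping, the proof is a direct chain of substitutions, and I do not anticipate a real obstacle.
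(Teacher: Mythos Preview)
Your proposal is correct and follows essentially the same approach as the paper: both use the explicit description of $(s_ys_z)^p$ as multiplication by $C(x)^{2p}$, invoke Proposition~\ref{lem:companion-power-estimate-parab} to obtain $C(\xi)^{2p}\equiv I_2+pM\pmod{p^2}$ with $M=\bigl(\begin{smallmatrix}2&-x_0\\x_0&-2\end{smallmatrix}\bigr)$, and then read off the $(y,z)$-displacement $pM(y_0,z_0)^\top$ and identify its entries with $\partial_yP$ and $-\partial_zP$. Your extra remark on the legitimacy of substituting $\xi(y_0+pu,z_0+pv)$ into the congruence \eqref{eqn:companion-power-estimate-parab-1} is a point the paper handles implicitly, and your justification via preservation of Gauss-norm congruences under composition is exactly right.
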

\begin{proof}
    Throughout the proof, $\xi=\xi(y_0+pu,z_0+pv)$, an analytic map of $(u,v)$. Observe that $\xi\equiv x_0\pmod{p}$ by \eqref{eqn:parametrization-expansion}.

    As $(s_ys_z)^p$ sends $\begin{pmatrix} y \\ z \end{pmatrix}$ to $C(x)^{2p}\begin{pmatrix} y \\ z \end{pmatrix}$, we have
    \begin{align*}
        f\begin{pmatrix} u \\ v \end{pmatrix} &= \frac1p\left[C(\xi)^N\begin{pmatrix} y_0+pu \\ z_0+pv \end{pmatrix}-\begin{pmatrix} y_0 \\ z_0 \end{pmatrix}\right] \\
        &= C(\xi)^N\begin{pmatrix} u \\ v \end{pmatrix} + \frac{C(\xi)^N-I_2}{p}\begin{pmatrix} y_0 \\ z_0 \end{pmatrix}.
    \end{align*}
    By Proposition \ref{lem:companion-power-estimate-parab}, we have
    \begin{align*}
        f\begin{pmatrix} u \\ v \end{pmatrix} &= C(\xi)^N\begin{pmatrix} u \\ v \end{pmatrix} + \frac{C(\xi)^N-I_2}{p}\begin{pmatrix} y_0 \\ z_0 \end{pmatrix} \\
        &\equiv \begin{pmatrix} u \\ v \end{pmatrix} + \begin{pmatrix} 2 & -x_0 \\ x_0 & -2 \end{pmatrix}\begin{pmatrix} y_0 \\ z_0 \end{pmatrix}\pmod{p} \\
        &= \begin{pmatrix} u \\ v \end{pmatrix} + \begin{pmatrix} 2y_0-x_0z_0 \\ -2z_0+x_0y_0 \end{pmatrix} = \begin{pmatrix} u \\ v \end{pmatrix}+\begin{pmatrix} \partial_yP \\ -\partial_zP \end{pmatrix}. \qedhere
    \end{align*}
\end{proof}

\begin{lemma}[Powers of $s_zs_x$ and $s_xs_y$]
    \label{lem:expansion-g-h}
    Assume \SA{}. Let $N=(p^2-1)/2$.
    \begin{enumerate}[(a)]
        \item Suppose $y_0\not\equiv\pm 2\pmod{p}$. Let $y_1$ be the unique fixed point of $T_p$ in $y_0+p\Z_p$. Let $c_1$ be the number
        \[c_1=-N\frac{\partial_xP}{y_0^2-4}.\]
        Then by $g=\Psi^{-1}(s_zs_x)^{N/2}\Psi$ we have
        \begin{align}
            \label{eqn:expansion-g-mod-p}
            g\begin{pmatrix} u \\ v \end{pmatrix}&\equiv\begin{pmatrix} 1 & 0 \\ c_1 & 1 \end{pmatrix}\begin{pmatrix} u \\ v \end{pmatrix} + \frac{y_0-y_1}{p}\begin{pmatrix} 0 \\ c_1 \end{pmatrix}\pmod{p}, \\
            \label{eqn:expansion-gp-mod-pp}
            g^p\begin{pmatrix} u \\ v \end{pmatrix}&\equiv\begin{pmatrix} 1 & 0 \\ pc_1 & 1 \end{pmatrix}\begin{pmatrix} u \\ v \end{pmatrix} + \frac{z_0-z_1}{p}\begin{pmatrix} 0 \\ pc_1 \end{pmatrix}\pmod{p^2}.
        \end{align}
        \item Suppose $z_0\not\equiv\pm 2\pmod{p}$. Let $z_1$ be the unique fixed point of $T_p$ in $z_0+p\Z_p$. Let $c_2$ be the number
        \[c_2=N\frac{\partial_xP}{z_0^2-4}.\]
        Then by $h=\Psi^{-1}(s_xs_y)^{N/2}\Psi$ we have
        \begin{align}
            \label{eqn:expansion-h-mod-p}
            h\begin{pmatrix} u \\ v \end{pmatrix}&\equiv\begin{pmatrix} 1 & c_2 \\ 0 & 1 \end{pmatrix}\begin{pmatrix} u \\ v \end{pmatrix} + \frac{z_0-z_1}{p}\begin{pmatrix} c_2 \\ 0 \end{pmatrix}\pmod{p}, \\
            \label{eqn:expansion-hp-mod-pp}
            h^p\begin{pmatrix} u \\ v \end{pmatrix}&\equiv\begin{pmatrix} 1 & pc_2 \\ 0 & 1 \end{pmatrix}\begin{pmatrix} u \\ v \end{pmatrix} + \frac{z_0-z_1}{p}\begin{pmatrix} pc_2 \\ 0 \end{pmatrix}\pmod{p^2}.
        \end{align}
    \end{enumerate}
\end{lemma}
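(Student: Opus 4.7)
The plan is to unpack $g$ and $h$ in the $(u,v)$ coordinates, substitute the companion-matrix estimates from Proposition \ref{lem:companion-power-estimate}, and read off the leading behaviour, following the same pattern used in Lemma \ref{lem:expansion-para-f}. All the heavy lifting has been isolated in Section \ref{sec:computation-gadgets}, so the work here is bookkeeping.

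For part (a), note that $(s_zs_x)^{N/2}$ fixes the $y$-coordinate and multiplies $C(y)^N$ to the column $\begin{pmatrix} z \\ x \end{pmatrix}$, by the same reasoning used in the remark after Proposition \ref{lem:companion-power}. Writing $\Psi(u,v)=(\xi,y_0+pu,z_0+pv)$ with $\xi=\xi(y_0+pu,z_0+pv)$, the image under $(s_zs_x)^{N/2}$ has $y$-coordinate still $y_0+pu$, while the new $z$-coordinate $z'$ is the top entry of $C(y_0+pu)^N\begin{pmatrix}z_0+pv \\ \xi\end{pmatrix}$. Consequently $g(u,v)=(u,(z'-z_0)/p)$. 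Substituting \eqref{eqn:companion-power-estimate-1} and observing that the rank-one correction carries the factor $(y_0-y_1+pu)$, which is divisible by $p$, I can freely replace $z_0+pv$ by $z_0$ and $\xi$ by $x_0$ inside the matrix--vector product without losing modulo-$p^2$ precision. The emerging scalar $y_0z_0-2x_0$ is exactly $-\partial_xP$, producing the constant $c_1$; dividing by $p$ and regrouping then yields \eqref{eqn:expansion-g-mod-p}.

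The $g^p$ estimate is obtained by the same recipe, replacing \eqref{eqn:companion-power-estimate-1} with its sharper $p$-th power analogue \eqref{eqn:companion-power-estimate-2}, since $g^p=\Psi^{-1}(s_zs_x)^{pN/2}\Psi$ corresponds to multiplication by $C(y)^{pN}$. The rank-one correction is now proportional to $p(y_0-y_1+pu)$, hence divisible by $p^2$, so the same substitutions preserve the equality modulo $p^3$; dividing by $p$ then produces the claimed modulo-$p^2$ identity (with $\frac{y_0-y_1}{p}$ in the role of the constant offset, which is the only reading consistent with the hypotheses of (a)).

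Part (b) is completely symmetric: $(s_xs_y)^{N/2}$ fixes $z$ and multiplies $C(z)^N$ to $\begin{pmatrix}x \\ y\end{pmatrix}$, so $h(u,v)=((y'-y_0)/p,v)$ with $y'$ the bottom entry of $C(z_0+pv)^N\begin{pmatrix}\xi \\ y_0+pu\end{pmatrix}$. Running the same analysis with \eqref{eqn:companion-power-estimate-1} and \eqref{eqn:companion-power-estimate-2} gives \eqref{eqn:expansion-h-mod-p} and \eqref{eqn:expansion-hp-mod-pp} respectively. The sign flip that produces $c_2=+N\partial_xP/(z_0^2-4)$ (as opposed to $c_1$) comes from reading the bottom row $(2,-z_0)$ of the rank-one matrix, since $2\xi-z_0(y_0+pu)\equiv 2x_0-z_0y_0=+\partial_xP\pmod p$. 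The only place where care is required is checking that the two substitutions inside the correction term are valid at the stated modulus; I do not anticipate any genuine obstacle beyond this.
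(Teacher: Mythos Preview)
Your proposal is correct and follows essentially the same approach as the paper: both express the action of $(s_zs_x)^{N/2}$ (resp.\ $(s_xs_y)^{N/2}$) as multiplication by $C(y)^N$ (resp.\ $C(z)^N$) on the appropriate pair of coordinates, substitute the companion-power estimates \eqref{eqn:companion-power-estimate-1}--\eqref{eqn:companion-power-estimate-2}, and simplify using $\xi\equiv x_0\pmod p$ to isolate the factor $2x_0-y_0z_0=\partial_xP$. Your observation that the constant offset in \eqref{eqn:expansion-gp-mod-pp} should read $\frac{y_0-y_1}{p}$ rather than $\frac{z_0-z_1}{p}$ is well taken; this is indeed a typographical slip in the statement, since $z_1$ is not introduced in part~(a).
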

\begin{proof}
    Denote by $\xi=\xi(y_0+pu,z_0+pv)$ throughout the proof. Note that $\xi\equiv x_0\pmod{p}$ as functions of $(u,v)$.	
	
	(a) We know that $(s_zs_x)^{N/2}=(s_zs_x)^{(p^2-1)/4}$ sends $(z,x)$ to
	\[(s_zs_x)^{N/2}\colon\begin{pmatrix} z \\ x \end{pmatrix}\mapsto C(y)^N\begin{pmatrix} z \\ x \end{pmatrix}=\begin{pmatrix} U_N(y)z-U_{N-1}(y)x \\ U_{N-1}(y)z-U_{N-2}(y)x \end{pmatrix}.\]
	If we plug in $(z,x)\leftarrow(z_0+pv,\xi)$ and apply $\Psi^{-1}$ to the result, we see that
	\[g\begin{pmatrix} u \\ v \end{pmatrix} = \begin{pmatrix} u \\ \frac1p(U_N(y_0+pu)(z_0+pv)-U_{N-1}(y_0+pu)\xi-z_0) \end{pmatrix}.\]
	By the expansion in Proposition \ref{lem:companion-power-estimate}, we have, with $c_0=N/(y_0^2-4)$,
	\begin{align*}
		U_N(y_0+pu) &\equiv 1 + c_0y_0(y_0-y_1+pu)\pmod{p^2}, \\
		U_{N-1}(y_0+pu) &\equiv 2c_0(y_0-y_1+pu)\pmod{p^2}.
	\end{align*}
	Together with $(y_0-y_1+pu)\equiv 0\pmod{p}$, we proceed as
	\begin{align*}
		&\frac{U_N(y_0+pu)(z_0+pv)-U_{N-1}(y_0+pu)\xi-z_0}{p} \\
		&\equiv\frac{(1+c_0y_0(y_0-y_1+pu))(z_0+pv)-z_0-2c_0(y_0-y_1+pu)\xi}{p} \pmod{p} \\
		&\equiv c_0y_0\left(\frac{y_0-y_1}{p}+u\right)z_0+v-2c_0\left(\frac{y_0-y_1}{p}+u\right)x_0 \pmod{p} \\
		&= v - (2x_0-y_0z_0)c_0\left(\frac{y_0-y_1}{p}+u\right) \\
		&= v + c_1\cdot\left(\frac{y_0-y_1}{p}+u\right).
	\end{align*}
	Hence \eqref{eqn:expansion-g-mod-p} follows. Similarly, to prove \eqref{eqn:expansion-gp-mod-pp} we pay attention to
    \[g^p\begin{pmatrix} u \\ v \end{pmatrix}=\begin{pmatrix} u \\ \frac1p(U_{Np}(y_0+pu)(z_0+pv)-U_{Np-1}(y_0+pu)\xi-z_0)\end{pmatrix},\]
    and
    \begin{align*}
        U_{Np}(y_0+pu) &\equiv 1+pc_0y_0(y_0-y_1+pu)\pmod{p^3}, \\
        U_{Np-1}(y_0+pu) &\equiv 2pc_0(y_0-y_1+pu)\pmod{p^3},
    \end{align*}
    to perform a similar computation as if $c_1$ is replaced to $pc_1$.

    (b) We do the same computation, but we pay attention to
    \[h\begin{pmatrix} u \\ v \end{pmatrix} = \begin{pmatrix} \frac1p(U_{N-1}(z_0+pv)\xi-U_{N-2}(z_0+pv)(y_0+pu)-y_0) \\ v \end{pmatrix}\]
    and, with $c_0=N/(z_0^2-4)$,
    \begin{align*}
        U_{N-1}(z_0+pv) &\equiv 2c_0(z_0-z_1+pv) \pmod{p^2}, \\
        U_{N-2}(z_0+pv) &\equiv -1+c_0z_0(z_0-z_1+pv) \pmod{p^2}.
    \end{align*}
    Raising to power $p$ is also similar.
\end{proof}

\section{A Strict Move in a Level 1 Polydisk}
\label{sec:counting-argument}

In this section, we establish that there exists a point $\mathbf{p}\in X_D^\ast(\Z_p)$ and an element $\gamma\in\Gamma$ such that $\dist(\mathbf{p},\gamma.\mathbf{p})=p^{-1}$. The proof differs according to $\left(\frac{D-4}{p}\right)=1$ or $D\equiv 0\pmod{p^2}$; for the former, we have an explicit choice of such a pair $(\mathbf{p},\gamma)$.

In case of $D\equiv 0\pmod{p^2}$ yet $\left(\frac{D-4}{p}\right)\neq 1$, we have $p\equiv 3\pmod{4}$, and we can use D. Martin's proof \cite{Martin2025} of modulo $p$ divisibility of $\Gamma$-orbits, which easily generalizes to modulo $p^2$ divisibiity of orbits. Together with a counting argument, we show that \emph{any} point $\mathbf{p}\in X_D^\ast(\Z_p)$ has a $\gamma\in\Gamma$ of the desired property.

\subsection{If $(D-4)$ is a nonzero quadratic residue} 
In this case, under certain conditions on the point $\mathbf{p}\in X_D^\ast(\Z_p)$, one can manually find $\gamma\in\Gamma$ such that $\dist(\mathbf{p},\gamma.\mathbf{p})=p^{-1}$.

\begin{proposition}
    \label{lem:pigeonhole-mod-pp-1}
    If $\mathbf{p}\in X_D^\ast(\Z_p)$ is a point such that
    \begin{itemize}
        \item one of its coordinate is congruent to $\pm 2$ modulo $p$ and
        \item all partial derivatives of $P(x,y,z)=x^2+y^2+z^2-xyz$ at $\mathbf{p}$ are nonzero modulo $p$,
    \end{itemize}
    then there exists $\gamma\in\Gamma$ such that $\dist(\mathbf{p},\gamma.\mathbf{p})=p^{-1}$.
\end{proposition}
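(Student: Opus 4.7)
The plan is to exhibit an explicit $\gamma \in \Gamma$ that works, by combining Lemma \ref{lem:examples-of-stabilizers}(b) (to control the distance from above) with Lemma \ref{lem:expansion-para-f} (to control the distance from below). Specifically, I would fix notation by writing $\mathbf{p} = (x_0, y_0, z_0)$ and, using the full symmetry of $X_D^\ast$ in its coordinates, assume without loss of generality that $x_0 \equiv \pm 2 \pmod{p}$.

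Because the hypothesis asserts that \emph{all} partial derivatives of $P$ at $\mathbf{p}$ are nonzero modulo $p$, in particular $\partial_x P(\mathbf{p}) = 2x_0 - y_0 z_0 \not\equiv 0 \pmod{p}$, so the standing assumption \SA{} is met and we get a parametrization $\Psi\colon \Z_p^2 \to U$ of the level 1 polydisk $U$ containing $\mathbf{p}$, based on the $(y,z)$-coordinates and centered at $(x_0, y_0, z_0)$. The candidate element is $\gamma = (s_y s_z)^p$. By Lemma \ref{lem:examples-of-stabilizers}(b), this $\gamma$ stabilizes $U$, which immediately gives $\dist(\mathbf{p}, \gamma.\mathbf{p}) \leq p^{-1}$.

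For the matching lower bound, write $f = \Psi^{-1} \gamma \Psi$. Lemma \ref{lem:expansion-para-f} supplies the modulo $p$ expansion
\[
f\begin{pmatrix} u \\ v \end{pmatrix} \equiv \begin{pmatrix} u \\ v \end{pmatrix} + \begin{pmatrix} \partial_y P \\ -\partial_z P \end{pmatrix} \pmod{p},
\]
with partial derivatives evaluated at $\mathbf{p}$. Plugging in $(u,v) = (0,0)$, which corresponds to $\mathbf{p}$ under $\Psi$, yields $f(0,0) \equiv (\partial_y P,\, -\partial_z P) \pmod{p}$. By hypothesis both $\partial_y P$ and $\partial_z P$ are nonzero modulo $p$, so $f(0,0) \not\equiv (0,0) \pmod{p}$. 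Unwinding the parametrization, $\gamma.\mathbf{p} = \Psi(f(0,0))$ has $y$-coordinate $y_0 + p \cdot f(0,0)_1$ with $f(0,0)_1 \in \Z_p^\times$, so its $y$-coordinate differs from $y_0$ by exactly $p^{-1}$ in $|\cdot|_p$. Hence $\dist(\mathbf{p}, \gamma.\mathbf{p}) \geq p^{-1}$, and combined with the upper bound we get equality.

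If instead $y_0 \equiv \pm 2 \pmod{p}$ or $z_0 \equiv \pm 2 \pmod{p}$, the same argument applies verbatim with the roles of the coordinates permuted: one uses $(s_z s_x)^p$ or $(s_x s_y)^p$ respectively, together with the corresponding parametrization of $U$ (which exists because the relevant partial derivative of $P$ is again nonzero modulo $p$ by hypothesis), and invokes the symmetric analogue of Lemma \ref{lem:expansion-para-f}. I do not foresee a real obstacle here — the proposition is essentially a direct read-off of the first-order expansion computed in Lemma \ref{lem:expansion-para-f}, and the two hypotheses of the proposition are precisely what one needs to ensure (i) that $\gamma$ stabilizes $U$ and (ii) that the leading order displacement vector is nonzero modulo $p$.
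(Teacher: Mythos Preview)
Your proposal is correct and follows essentially the same route as the paper: assume $x_0\equiv\pm 2\pmod p$, take $\gamma=(s_ys_z)^p$, use Lemma~\ref{lem:examples-of-stabilizers}(b) for $\gamma.\mathbf{p}\equiv\mathbf{p}\pmod p$, and use Lemma~\ref{lem:expansion-para-f} together with the nonvanishing of the partial derivatives to get $\Psi^{-1}(\gamma.\mathbf{p})\not\equiv\Psi^{-1}(\mathbf{p})\pmod p$, hence $\gamma.\mathbf{p}\not\equiv\mathbf{p}\pmod{p^2}$. Your write-up is in fact slightly more explicit than the paper's (you spell out that $\partial_xP\not\equiv 0$ is what makes \SA{} hold and unwind the parametrization), but the argument is the same.
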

\begin{proof}
    Let $\mathbf{p}=(x_0,y_0,z_0)$ be such that $x_0\equiv\pm 2\pmod{p}$, say. Then we can specifically say that $\gamma=(s_ys_z)^p$ works. Indeed, by Lemma \ref{lem:expansion-para-f} and our assumptions on partial derivatives, we have $\Psi^{-1}(\gamma.\mathbf{p})\not\equiv\Psi^{-1}\mathbf{p}\pmod{p}$. This implies that $\gamma.\mathbf{p}\not\equiv\mathbf{p}\pmod{p^2}$. But Lemma \ref{lem:examples-of-stabilizers} implies $\gamma.\mathbf{p}\equiv\mathbf{p}\pmod{p}$.
\end{proof}

See Lemma \ref{lem:special-point} and Proposition \ref{lem:minimal-polydisk-exceptional} to see that there is point $\mathbf{p}\in X_D^\ast(\Z_p)$ satisfying the above hypotheses.

\subsection{If $D$ is zero modulo $p^2$} 
Let us suppose $\left(\frac{D-4}{p}\right)\neq 1$ and $D\equiv 0\pmod{p^2}$. In that case we have $\left(\frac{-4}{p}\right)=(-1)^{(p-1)/2}\neq 1$, so $p\equiv 3\pmod{4}$.

\begin{proposition}
    \label{lem:counting-mod-p-elements}
    Suppose $p\equiv 3\pmod{4}$ and $D\equiv 0\pmod{p}$. Then $|X_D^\ast(\Z/p\Z)|=p(p-3)$.
\end{proposition}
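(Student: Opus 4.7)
Since $D\equiv 0\pmod p$, the reduction mod $p$ of $X_D$ coincides with that of $X_0$, so I will count $|X_0^\ast(\F_p)|$. The plan has two steps: identify the singular locus of $X_0$ over $\F_p$, and then count the total number of $\F_p$-points of $X_0$ by slicing.

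First I will determine the singular points. A point $(x,y,z)\in X_0(\F_p)$ is singular iff $2x=yz$, $2y=xz$, $2z=xy$ simultaneously. Eliminating between the first two gives $z(x^2-y^2)=0$, and then a short case analysis (branching on $z=0$, $x=y$, or $x=-y$, and using $p>3$) shows that the only candidate singular point is the origin, and it lies on $X_0$. Hence $|X_0^\ast(\F_p)|=|X_0(\F_p)|-1$.

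Next I will count $|X_0(\F_p)|$ by slicing in $z$. For fixed $z$, the linear change of variables $x=u+v$, $y=u-v$ (valid since $p$ is odd) turns the fiber equation into $(2-z)u^2+(2+z)v^2=-z^2$. For $z\neq 0,\pm 2$, both coefficients on the left and the right-hand side are nonzero, so the standard character-sum count for a nondegenerate binary diagonal quadratic form gives $p-\chi(z^2-4)$ points, where $\chi=\left(\tfrac{\cdot}{p}\right)$. For $z=\pm 2$ the equation becomes $v^2=-1$ (or $u^2=-1$), which has no solution because $p\equiv 3\pmod 4$. For $z=0$ the equation $x^2+y^2=0$ has exactly one solution $(0,0)$, again because $-1$ is not a square. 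Summing yields
\[
|X_0(\F_p)|=1+\sum_{z\neq 0,\pm 2}\bigl(p-\chi(z^2-4)\bigr)=1+(p-3)p-\sum_{z\neq 0,\pm 2}\chi(z^2-4).
\]

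Finally I will evaluate the character sum. The standard identity $\sum_{z\in\F_p}\chi(z^2-4)=-\chi(1)=-1$ (from $\sum_z\chi(\alpha z^2+\beta z+\gamma)=-\chi(\alpha)$ when the discriminant $\beta^2-4\alpha\gamma\neq 0$) combined with $\chi(0^2-4)=\chi(-4)=\chi(-1)=-1$ and $\chi((\pm 2)^2-4)=\chi(0)=0$ gives
\[
\sum_{z\neq 0,\pm 2}\chi(z^2-4)=-1-\chi(-4)=-1-(-1)=0.
\]
Thus $|X_0(\F_p)|=p(p-3)+1$, and subtracting the singular point yields $|X_0^\ast(\F_p)|=p(p-3)$. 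The only places that use the hypothesis $p\equiv 3\pmod 4$ are the computation at $z=0,\pm 2$ and the cancellation $-1-\chi(-4)=0$; if $p\equiv 1\pmod 4$ these steps would produce different counts, which is the only real point of care in the argument.
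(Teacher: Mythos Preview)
Your proof is correct. The singular-locus check, the diagonalization $x=u+v$, $y=u-v$, the conic count $p-\chi(z^2-4)$ for $z\neq 0,\pm 2$, and the evaluation of the character sum all go through as stated.

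Your route is genuinely different from the paper's. The paper fixes the $x$-coordinate, writes $x=\zeta+\zeta^{-1}$ with $\zeta\in\F_{p^2}^\times$, and factors the residual quadratic in $(y,z)$ as $(y-\zeta z)(y-\zeta^{-1}z)=-x^2$; it then splits into the ``hyperbolic'' case $\zeta\in\F_p$ (giving $p-1$ points per admissible $x$) and the ``elliptic'' case $\zeta\notin\F_p$ (giving $p+1$ points), with $\tfrac{p-3}{2}$ admissible values of $x$ in each case. This fits the paper's broader framework, since the $\zeta+\zeta^{-1}$ parametrization and the hyperbolic/elliptic dichotomy are exactly what drive the companion-matrix and Chebyshev-polynomial estimates used elsewhere. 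Your argument, by contrast, is a self-contained slice-and-character-sum count that needs none of that machinery; it is shorter for the bare counting statement and would also adapt more readily to other values of $D\pmod p$, at the cost of not feeding into the dynamical picture.
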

\begin{proof}
    Recall the field $\F_p$ of order $p$, which is isomorphic to $\Z/p\Z$. Its quadratic extension is the unique field $\F_{p^2}$ of order $p^2$. Now the multiplicative group of invertible elements $\F_{p^2}^\times$ of $\F_{p^2}$ is generated by a primitive $(p^2-1)$-th roots of unity, denoted by $\zeta_{p^2-1}$. We denote by $\zeta_{p-1}=\zeta_{p^2-1}^{p+1}$ and $\zeta_{p+1}=\zeta_{p^2-1}^{p-1}$ respectively; notice that $\zeta_{p-1}$ generates $\F_p^\times$.

    Because $p\equiv 3\pmod{4}$, for any point $(x,y,z)\in X_0^\ast(\F_p)$ we have $x\neq 0,\pm 2$. Otherwise, (i) that $x=0$ yields either $y/z=\sqrt{-1}$, a contradiction to $\sqrt{-1}\notin\F_p$, or $(x,y,z)=(0,0,0)$, a point excluded in $X_0^\ast$, and (ii) that $x=\pm 2$ yields $\frac12(y\mp z)=\sqrt{-1}$, again a contradiction.

    For each point $(x,y,z)\in X_0^\ast(\F_p)$, we can write $x=\zeta+\zeta^{-1}$ by some $\zeta\in\F_{p^2}^\times$.
    \begin{itemize}
        \item If $\zeta\in\F_p$, then we can uniquely write $\zeta=\zeta_{p-1}^k$ by some $0<k<\frac12(p-1)$ (the range of $k$ is designed to exclude $x=\pm 2$).
    \end{itemize}
    We call such $x$ \emph{hyperbolic}, following terminologies in \cite[\S{2.1}]{BGS16details}. There are $\frac12(p-1)-1=\frac12(p-3)$ many hyperbolic $x$'s.
    \begin{itemize}
        \item If $\zeta\notin\F_p$, then we can uniquely write $\zeta=\zeta_{p+1}^\ell$ with $0<\ell<\frac12(p+1)$, $\ell\neq\frac14(p+1)$ (the range of $\ell$ is designed to exclude $x=0,\pm 2$).
    \end{itemize}
    We call such $x$ \emph{elliptic}, following loc. cit.. There are $\frac12(p+1)-2=\frac12(p-3)$ many nonzero elliptic $x$'s.

    For a hyperbolic $x$, we parametrize other points $(y,z)$ as follows. From $x=\zeta+\zeta^{-1}$, we may turn the equation $x^2+y^2+z^2=xyz$ to
    \[(y-\zeta z)(y-\zeta^{-1}z)=-x^2.\]
    As $y,z,\zeta\in\F_p$ and $\zeta\neq\zeta^{-1}$, we see that $(y,z)$ is uniquely determined by $\lambda\in\F_p^\times$ thanks to the equations
    \begin{align*}
        y-\zeta z&=\lambda, \\
        y-\zeta^{-1}z&=-x^2\lambda^{-1}.
    \end{align*}
    So for each hyperbolic $x$, there are $(p-1)$ many points of the same $x$-coordinate.

    For a nonzero elliptic $x$, we parametrize other points $(y,z)$ by
    \[(y-\zeta z)(y-\zeta^{-1}z)=-x^2,\]
    but this time, as $\zeta\in\F_{p^2}\setminus\F_p$, we have $y-\zeta z\in\F_{p^2}\setminus\F_p$. The number $\lambda=y-\zeta z$ is nonzero and its Galois conjugate is $\lambda^p=y-\zeta^{-1}z=-x^2\lambda^{-1}$, so in particular $\lambda^{p+1}=-x^2$ follows. Now put $-x^2=\zeta_{p-1}^n$ and $\lambda=\zeta_{p^2-1}^m$. That $\lambda^{p+1}=-x^2$ yields
    \[\frac{(p+1)m}{p^2-1}\equiv\frac{n}{p-1}\pmod{1},\]
    hence $m\equiv n\pmod{p-1}$. There are $(p+1)$ many such residue classes $m$ modulo $(p^2-1)$, so we conclude that there are $(p+1)$ many choices of $\lambda=y-\zeta z$, which will determine $(y,z)$ uniquely. So for each nonzero elliptic $x$, there are $(p+1)$ many points of the same $x$-coordinate.

    Combining the two, we see that
    \[|X_0^\ast(\F_p)|=\frac12(p-3)\cdot(p-1)+\frac12(p-3)\cdot(p+1)=p(p-3). \qedhere\]
\end{proof}

We combine this counting result with the following theorem. This is an easy consequence of D. Martin's proof of modulo $p$ divisibility of $\Gamma$-orbits.

\begin{theorem}[D. Martin]
    \label{thm:martin-chen}
    Let $p\equiv 3\pmod{4}$, $p>3$, and $k\geq 1$. Any $\Gamma$-orbit of $X_0^\ast(\Z/p^k\Z)$ has cardinality divisible by $p^k$.
\end{theorem}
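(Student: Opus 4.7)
The plan is to argue by induction on $k\geq 1$. The case $k=1$ is D.\ Martin's theorem \cite{Martin2025}, which serves as the base case. For the inductive step, I would assume the statement for $k-1\geq 1$ and prove it for $k$.

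Let $O\subseteq X_0^\ast(\Z/p^k\Z)$ be a $\Gamma$-orbit with image $\bar O\subseteq X_0^\ast(\Z/p^{k-1}\Z)$ under the reduction. Since $\Gamma$ acts compatibly with reduction, every fiber of the surjection $O\twoheadrightarrow\bar O$ has the same cardinality $m$, so $|O|=m\cdot|\bar O|$. The inductive hypothesis gives $p^{k-1}\mid|\bar O|$, and it suffices to establish $p\mid m$.

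To establish $p\mid m$, I would look for a point $\mathbf{p}\in O$ (with reduction $\bar\mathbf{p}\in\bar O$) and an element $\gamma\in\Stab_\Gamma(\bar\mathbf{p})$ whose $\langle\gamma\rangle$-orbit of $\mathbf{p}$ inside the fiber has size $p$. Since the $\langle\gamma\rangle$-orbit size always divides the $\Stab_\Gamma(\bar\mathbf{p})$-orbit size $m$, this would force $p\mid m$. The construction: take $\gamma=\gamma_0^{p^{k-2}}$ (interpreting $p^{k-2}=1$ when $k=2$), where $\gamma_0\in\Gamma$ is a power of $s_ys_z$ (or an analogous pair of Vieta involutions) chosen as in Lemma~\ref{lem:examples-of-stabilizers} so that $\gamma_0\equiv\mathrm{id}\pmod p$ on the level 1 polydisk of $\mathbf{p}$, with the additional requirement that the \emph{velocity} $v(\mathbf{p}):=(\gamma_0(\mathbf{p})-\mathbf{p})/p$ is nonzero modulo $p$. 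Applying Poonen's formula \eqref{eqn:flow-formula-by-Poonen} at $t=p^{k-2}$: the $j=1$ term contributes exactly $p^{k-1}v(\mathbf{p})$, while for $j\geq 2$ the identity $\val_p\binom{p^{k-2}}{j}=k-2-\val_p(j)$ combined with $\|(T_{\gamma_0}-\mathrm{Id})^j(\mathrm{id})\|\leq p^{-j}$ shows that the $j$-th term has $p$-adic valuation at least $k-2+j-\val_p(j)\geq k$ (using $j-\val_p(j)\geq 2$ for all $j\geq 2$ when $p\geq 3$). Hence
\[
\gamma(\mathbf{p})\equiv\mathbf{p}+p^{k-1}v(\mathbf{p})\pmod{p^k}.
\]
The congruence modulo $p^{k-1}$ places $\gamma$ in $\Stab_\Gamma(\bar\mathbf{p})$, and since $v(\mathbf{p})\not\equiv 0\pmod p$, the element $\gamma$ acts on the fiber as a nontrivial additive translation by a nonzero vector of $\F_p^2$, whose $\langle\gamma\rangle$-orbit has size exactly $p$.

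The principal obstacle will be verifying the existence, for every $\Gamma$-orbit $O$, of a point $\mathbf{p}\in O$ and a suitable $\gamma_0\in\Gamma$ with $v(\mathbf{p})\not\equiv 0\pmod p$. This is precisely a refinement of Martin's mod-$p$ argument: one must track not merely the orbit cardinality but the specific flow-velocity vector. I expect a case analysis on the coordinate-types classified in Proposition~\ref{lem:counting-mod-p-elements} (hyperbolic, elliptic, or the degenerate $\pm 2$ classes governed by Lemma~\ref{lem:examples-of-stabilizers}(b)), combined with the explicit expansions of $v$ computed in Lemmas~\ref{lem:expansion-para-f}--\ref{lem:expansion-g-h}, will yield the correct choice of $\gamma_0$ on each orbit and so close the induction.
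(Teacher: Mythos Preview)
Your approach is genuinely different from the paper's. The paper's proof is a one-liner: since $p\equiv 3\pmod 4$ and $D\equiv 0\pmod p$, every point of $X_0^\ast(\Z_p)$ has $xyz\not\equiv 0\pmod p$ (this is shown in the proof of Proposition~\ref{lem:counting-mod-p-elements}), so every division occurring in Martin's modulo-$p$ argument in \cite{Martin2025} is by a $p$-adic unit, and the entire proof carries over verbatim modulo $p^k$. No induction, no flows.

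Your inductive flow argument, by contrast, has a real gap at exactly the place you flag as the ``principal obstacle.'' The existence, on every $\Gamma$-orbit, of a point $\mathbf{p}$ and an element $\gamma_0\equiv\mathrm{id}\pmod p$ with $v(\mathbf{p})\not\equiv 0\pmod p$ is precisely the statement that $\dist(\mathbf{p},\gamma_0.\mathbf{p})=p^{-1}$, i.e.\ Proposition~\ref{lem:pigeonhole-mod-pp}. But in the paper's logic that proposition is \emph{deduced from} Theorem~\ref{thm:martin-chen} via the pigeonhole $|\red_2(\Gamma.\mathbf{p})|\geq p^2 > p(p-3)\geq |X_0^\ast(\F_p)|$. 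So your proposed route is circular unless you supply an independent proof of the nonvanishing. Your suggested case analysis does not do this: in the present setting no coordinate is $\equiv\pm 2\pmod p$, so Lemma~\ref{lem:expansion-para-f} is inapplicable; the velocities coming from Lemma~\ref{lem:expansion-g-h} (for $g^p$, $h^p$) vanish at the polydisk center; and the remaining candidate $(s_ys_z)^{(p^2-1)/4}$ has velocity proportional to $(x_0-x_1)/p$ (see Lemma~\ref{lem:expansion-nonpara-f}), which vanishes exactly when $x_0$ agrees modulo $p^2$ with the trace of rational rotation in its residue class. Nothing you have written rules out an orbit lying entirely in the locus where all three coordinates satisfy this congruence --- and ruling this out is essentially the content of condition~\XD{} in Section~\ref{sec:generalization}, which the paper treats as nontrivial.

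A smaller point: the sentence ``the $\langle\gamma\rangle$-orbit size always divides the $\Stab_\Gamma(\bar\mathbf{p})$-orbit size $m$'' is false in general. What is true, and what you actually need, is that $\gamma$ acts on the \emph{entire} fiber as translation by the fixed vector $p^{k-1}v(\mathbf{p})$ (since any other point $\mathbf{q}$ of the fiber has $\mathbf{q}\equiv\mathbf{p}\pmod{p^{k-1}}$, hence $v(\mathbf{q})\equiv v(\mathbf{p})\pmod{p^{k-1}}$ and $p^{k-1}v(\mathbf{q})\equiv p^{k-1}v(\mathbf{p})\pmod{p^k}$); therefore every $\langle\gamma\rangle$-orbit in the fiber has size $p$, and $p\mid m$ follows.
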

\begin{proof}
    The key is that every point $(x,y,z)\in X_0^\ast(\Z_p)$ has $xyz\not\equiv 0\pmod{p}$ and hence all division operations of the proof in \cite{Martin2025} works, and all modulo $p$ countings generalize to those modulo $p^k$.
\end{proof}

\begin{proposition}
    \label{lem:pigeonhole-mod-pp}
    Suppose $p>3$, $\left(\frac{D-4}{p}\right)\neq 1$, and $D\equiv 0\pmod{p^2}$. For each point $\mathbf{p}\in X_D^\ast(\Z_p)$, there exists $\gamma\in\Gamma$ such that $\dist(\mathbf{p},\gamma.\mathbf{p})=p^{-1}$.
\end{proposition}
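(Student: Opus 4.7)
The plan is to run a counting argument using Martin's divisibility result (Theorem \ref{thm:martin-chen}) at level $p^2$, together with the bound $|X_0^\ast(\F_p)|=p(p-3)$ from Proposition \ref{lem:counting-mod-p-elements}. The hypothesis $D\equiv 0\pmod{p^2}$ means the surfaces $X_D^\ast$ and $X_0^\ast$ coincide over $\Z/p^2\Z$, so Martin's theorem applies verbatim to $\Gamma$-orbits on $X_D^\ast(\Z/p^k\Z)$ for $k=1,2$.

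Fix $\mathbf{p}\in X_D^\ast(\Z_p)$ and let $U$ be the level 1 polydisk containing it. Let $O_1\subseteq X_D^\ast(\Z/p\Z)$ and $O_2\subseteq X_D^\ast(\Z/p^2\Z)$ denote the $\Gamma$-orbits of $\mathbf{p}$ modulo $p$ and modulo $p^2$, respectively. The reduction map $\red\colon O_2\to O_1$ is surjective and $\Gamma$-equivariant, and because $\Gamma$ acts transitively on both ends, every fiber has the same cardinality $r$. Crucially, the fiber over the mod $p$ class of $\mathbf{p}$ is precisely the set $\{\gamma.\mathbf{p}\bmod p^2:\gamma\in\Stab_\Gamma(U)\}$, since an element of $\Gamma$ lands in that fiber iff it preserves the level 1 polydisk $U$. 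Hence producing a $\gamma\in\Gamma$ with $\dist(\mathbf{p},\gamma.\mathbf{p})=p^{-1}$ is equivalent to verifying $r>1$.

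Now apply the counting. By Theorem \ref{thm:martin-chen} at $k=1$ we have $p\mid |O_1|$, and by Proposition \ref{lem:counting-mod-p-elements} we have $|O_1|\leq |X_0^\ast(\F_p)|=p(p-3)$. Since $p>3$, the integer $p-3$ is nonzero modulo $p$, so $v_p(|O_1|)=1$; write $|O_1|=pm$ with $\gcd(m,p)=1$. Theorem \ref{thm:martin-chen} at $k=2$ gives $p^2\mid |O_2|=|O_1|\cdot r=pmr$, whence $p\mid mr$, and since $p\nmid m$ we conclude $p\mid r$. In particular $r\geq p>1$, so there exists $\gamma\in\Stab_\Gamma(U)$ with $\gamma.\mathbf{p}\not\equiv\mathbf{p}\pmod{p^2}$. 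Because $\gamma$ preserves $U$, we also have $\gamma.\mathbf{p}\equiv\mathbf{p}\pmod p$, which yields $\dist(\mathbf{p},\gamma.\mathbf{p})=p^{-1}$ as required.

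The only subtle point is the identification of the fiber of $\red\colon O_2\to O_1$ over $\mathbf{p}\bmod p$ with $\Stab_\Gamma(U)\cdot\mathbf{p}$; everything else is a clean application of the given divisibility and cardinality bounds. No single step is a serious obstacle, but this fiber identification is what translates Martin's abstract orbit-divisibility into the concrete ``small move'' statement we need.
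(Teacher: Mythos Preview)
Your proof is correct and follows essentially the same counting argument as the paper. The paper's version is slightly leaner---it uses only the direct pigeonhole $|O_2|\geq p^2>p(p-3)\geq|O_1|$ (invoking Theorem~\ref{thm:martin-chen} at $k=2$ alone) and then sets $\gamma=\gamma_1^{-1}\gamma_2$ for any colliding pair, without the equal-fiber-size or $k=1$ digressions---but your refinement is harmless and reaches the same conclusion.
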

\begin{proof}
    We know that $p\equiv 3\pmod{4}$. Let $S=\Gamma.\mathbf{p}$ be the $\Gamma$-orbit. Then the reduction $\red_2(S)\subset X_D^\ast(\Z/p^2\Z)=X_0^\ast(\Z/p^2\Z)$ has size at least $p^2$, by Theorem \ref{thm:martin-chen}. As the further reduction $\red_1(S)\subset X_D^\ast(\Z/p\Z)=X_0^\ast(\F_p)$ has size at most $p(p-3)<p^2$ (by Proposition \ref{lem:counting-mod-p-elements}), there exist $\gamma_1.\mathbf{p},\gamma_2.\mathbf{p}\in S$ that are congruent modulo $p$ but distinguished modulo $p^2$, i.e., $\dist(\gamma_1.\mathbf{p},\gamma_2.\mathbf{p})=p^{-1}$. Take $\gamma=\gamma_1^{-1}\gamma_2$.
\end{proof}

\section{Minimal Level 1 Polydisks}
\label{sec:criteria-of-good-polydisk}

A level 1 polydisk $U$ is a \emph{minimal (level 1) polydisk} if its stabilizer $\Stab_\Gamma(U)$ in $\Gamma$ act minimally on it. In this section, we demonstrate that there exists a minimal polydisk in $X_D^\ast(\Z_p)$ for any $p>3$. By this, we prove our Theorem \ref{thm:main}.

\subsection{Showing minimality}

All the minimality claim of this section can be decomposed into the following steps sketched below.

\begin{lemma} 
    \label{lem:minimality-criterion}
    Let $\Delta$ be a group of analytic automorphisms of $\Z_p^2$ verifying
    \begin{enumerate}[(i)]
        \item \emph{(Residual Transitivity)} the induced action of $\Delta$ on $(\Z_p/p\Z_p)^2$ is transitive, and
        \item \emph{(Minimal Subdisk)} there exists a subdisk $(u_0+p\Z_p)\times(v_0+p\Z_p)\subset\Z_p^2$ and a subgroup $\Delta_0\subset\Delta$ where $\Delta_0$ acts invariantly and minimally on that subdisk.
    \end{enumerate}
    Then $\Delta$ acts minimally on $\Z_p^2$.
\end{lemma}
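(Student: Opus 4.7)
The plan is to leverage the elementary observation that each analytic automorphism $\gamma\in\Delta$, together with its inverse, is continuous on $\Z_p^2$ and therefore reduces modulo $p$ to a bijection of $(\Z_p/p\Z_p)^2$; in particular, $\gamma$ permutes the level-1 polydisks $(a+p\Z_p)\times(b+p\Z_p)$ in accordance with its induced action on residues. Write $W=(u_0+p\Z_p)\times(v_0+p\Z_p)$ for the polydisk in (ii), and let $x\in\Z_p^2$ be arbitrary; the goal is to prove $\overline{\Delta.x}=\Z_p^2$.

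I would first establish the localized statement $W\subseteq\overline{\Delta.x}$. By (i) pick $\alpha\in\Delta$ whose reduction sends $\bar x$ to $\overline{(u_0,v_0)}$; equivalently, $\alpha(x)\in W$. The invariance half of (ii) keeps the $\Delta_0$-orbit of $\alpha(x)$ inside $W$, and the minimality half makes it dense in $W$. Since $\Delta_0.\alpha(x)\subseteq\Delta.x$, the desired inclusion follows at once.

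Then I would propagate this density to every other level-1 polydisk. For an arbitrary target $y\in\Z_p^2$, invoke (i) once more to obtain $\beta\in\Delta$ whose reduction sends $\overline{(u_0,v_0)}$ to $\bar y$, so that $y\in\beta(W)$. Because $\Delta.x$ is $\Delta$-invariant and each element of $\Delta$ is a homeomorphism of $\Z_p^2$, the closure $\overline{\Delta.x}$ is also $\Delta$-invariant; applying $\beta$ to the inclusion from the previous step gives $\beta(W)\subseteq\overline{\Delta.x}$, whence $y\in\overline{\Delta.x}$. Since $y$ was arbitrary, $\overline{\Delta.x}=\Z_p^2$, which is the claimed minimality.

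I do not anticipate a substantive obstacle. Once the preliminary observation that $\Delta$ permutes level-1 polydisks is in hand, the argument is purely formal manipulation of orbits and closures, and the two appearances of hypothesis (i) are what allow both the initial entry into $W$ and the transitive spreading across all fibers. The only subtlety worth stating explicitly is that residual transitivity at the level of points on $(\Z_p/p\Z_p)^2$ is equivalent to transitivity at the level of level-1 polydisks, which is the fact both steps secretly use.
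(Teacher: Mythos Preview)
Your proof is correct and follows essentially the same approach as the paper's. The paper phrases it in the $\epsilon$--$\delta$ style---given arbitrary $\mathbf{p},\mathbf{q}$, move both into $W$ via (i), bring them close inside $W$ via (ii), then pull back by the inverse---while you phrase it via orbit closures and $\Delta$-invariance of $\overline{\Delta.x}$; these are trivially equivalent renderings of the same two-step argument.
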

\begin{proof}
    Let $\mathbf{p},\mathbf{q}\in\Z_p^2$ be any points. We show that for every $\epsilon>0$ there exists $\delta\in\Delta$ such that $\mathsf{dist}(\mathbf{p},\delta(\mathbf{q}))<\epsilon$.

    By (i), we have elements $\alpha,\beta\in\Delta$ such that $\alpha(\mathbf{p})$ and $\beta(\mathbf{q})$ are in $(u_0+p\Z_p)\times(v_0+p\Z_p)$.
    By (ii), we have an element $\gamma\in\Delta$ such that $\mathsf{dist}(\alpha(\mathbf{p}),\gamma\beta(\mathbf{q}))<\epsilon$. It follows that $\mathsf{dist}(\mathbf{p},\alpha^{-1}\gamma\beta(\mathbf{q}))<\epsilon$. Put $\delta=\alpha^{-1}\gamma\beta$.
\end{proof}

\begin{remark}
    Propositions \ref{lem:local-minimality} or \ref{lem:conjugate-local-minimality} will be a key to establish the minimal subdisk step.
\end{remark}

The same logic applies to yield the following trick.
\begin{corollary}
    \label{lem:minimality-criterion-surface}
    Let $\Delta$ be a group of analytic automorphisms of the surface $X_D^\ast(\Z_p)$ verifying
    \begin{enumerate}[(i)]
        \item \emph{(Residual Transitivity)} the induced action of $\Delta$ on $X_D^\ast(\Z_p/p\Z_p)$ is transitive, and
        \item \emph{(Minimal level 1 Subdisk)} there exists a level 1 subdisk $U$ and a subgroup $\Delta_0\subset\Delta$ such that $\Delta_0$ acts invariantly and minimally on $U$.
    \end{enumerate}
    Then $\Delta$ acts minimally on $X_D^\ast(\Z_p)$.
\end{corollary}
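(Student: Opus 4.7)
The plan is to transport the proof of Lemma \ref{lem:minimality-criterion} verbatim to the surface setting, with the reduction map $\red_1\colon X_D^\ast(\Z_p)\to X_D^\ast(\Z_p/p\Z_p)$ playing the role of the coordinate reduction $\Z_p^2\to(\Z_p/p\Z_p)^2$ and the level 1 polydisk $U$ (a fiber of $\red_1$) playing the role of the mod-$p$ subdisk. The three beats of the argument are: \emph{push both points into $U$ using residual transitivity; bring them $\epsilon$-close inside $U$ using minimality of $\Delta_0$; pull back along an isometry}.

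Concretely, given arbitrary $\mathbf{p},\mathbf{q}\in X_D^\ast(\Z_p)$ and $\epsilon>0$, I would first invoke (i) to select $\alpha,\beta\in\Delta$ such that $\alpha(\mathbf{p})$ and $\beta(\mathbf{q})$ both lie in $U$. This is legal because $U$ is a fiber of $\red_1$, so membership in $U$ is a condition on the mod-$p$ class, which is exactly the data that the residual transitivity of (i) can prescribe. Then, using (ii), I would pick $\gamma\in\Delta_0\subset\Delta$ with $\dist(\alpha(\mathbf{p}),\gamma\beta(\mathbf{q}))<\epsilon$. Setting $\delta=\alpha^{-1}\gamma\beta\in\Delta$, it remains to conclude $\dist(\mathbf{p},\delta(\mathbf{q}))<\epsilon$.

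The one nontrivial input is that $\alpha^{-1}$ is distance-nonexpanding. In the cases of interest $\Delta\subset\Aut(X_D^\ast)$, so this is immediate from part (c) of the distance lemma proved above, which shows $\Aut(X_D^\ast)$ acts isometrically on $X_D^\ast(\Z_p)$. More generally, any analytic self-map of $\Z_p^m$ is automatically $1$-Lipschitz in the $\ell^\infty$-distance because all its power-series coefficients lie in $\Z_p$; consequently any analytic automorphism is an isometry. Either way, applying $\alpha^{-1}$ to the inequality preserves it, finishing the proof.

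I expect no real obstacle: the statement is a direct translation of Lemma \ref{lem:minimality-criterion}, and the only subtlety worth a line of comment is the isometric-pullback step. In applications later in \S\ref{sec:proof}, $\Delta$ will be $\Gamma$ (or its finite-index subgroup acting on $X_D^\ast(\Z_p)$) and $\Delta_0$ will be a stabilizer subgroup built from powers of $s_xs_y$, $s_ys_z$, $s_zs_x$, for which both hypotheses (i) and (ii) are verified by the tools of Sections \ref{sec:local-analysis-stabilizers} and \ref{sec:counting-argument}.
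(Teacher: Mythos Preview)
Your proposal is correct and matches the paper's approach exactly: the paper simply states that ``the same logic applies'' as in Lemma~\ref{lem:minimality-criterion}, and your write-up is precisely that transported argument with the isometry justification made explicit.
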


\subsection{Existence of a Minimal Polydisk}

By Lemma \ref{lem:minimality-criterion}, we show the existence of a minimal level 1 polydisk in $X_D^\ast(\Z_p)$.

\begin{lemma}
    \label{lem:special-point}
    Let $p\geq 5$, where we let $D\not\equiv 3\pmod{p}$ if $p=5$. If $\left(\frac{D-4}{p}\right)=1$, there exists a point $(x_0,y_0,z_0)\in X_D^\ast(\Z_p)$ such that
    \begin{itemize}
        \item $x_0\equiv\pm 2\pmod{p}$, while $y_0,z_0\not\equiv\pm 2\pmod{p}$, and
        \item all partial derivatives of $P(x,y,z)=x^2+y^2+z^2-xyz$ are nonzero modulo $p$ at the point.
    \end{itemize}
\end{lemma}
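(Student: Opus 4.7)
The plan is to construct the required point explicitly by fixing $x_0=2$ and parametrizing the resulting plane section of the Markoff surface by a single variable $z_0$. First I would invoke Hensel's lemma: because $\left(\frac{D-4}{p}\right)=1$, the element $D-4\in\Z_p^\times$ has a square root $w\in\Z_p^\times$. Plugging $x=2$ into $x^2+y^2+z^2-xyz=D$ gives $(y-z)^2=D-4$, so every triple of the form $(2,\,z_0+w,\,z_0)$ with $z_0\in\Z_p$ lies on $X_D(\Z_p)$.

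Computing the partial derivatives of $P(x,y,z)=x^2+y^2+z^2-xyz$ at such a point yields $\partial_xP=4-z_0(z_0+w)$, $\partial_yP=2w$, and $\partial_zP=-2w$. Since $p$ is odd and $w$ is a unit, $\partial_yP$ and $\partial_zP$ are automatically units, so the point is non-singular and two of the three derivative conditions in the lemma are already met. The remaining conditions to verify modulo $p$ are $z_0\not\equiv\pm 2$, $z_0\not\equiv\pm 2-w$ (so that $y_0=z_0+w\not\equiv\pm 2$), and the quadratic inequality $z_0^2+wz_0-4\not\equiv 0\pmod p$ (so that $\partial_xP\not\equiv 0$). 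These rule out at most $4+2=6$ residues of $z_0$ modulo $p$, so for $p\geq 7$ the pigeonhole principle guarantees an admissible $z_0$, giving the required point.

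The main obstacle is the boundary case $p=5$, where $6>p$ and the naive count fails; this is exactly where the auxiliary hypothesis $D\not\equiv 3\pmod 5$ enters. Among residues of $D$ modulo $5$, the only values with $\left(\frac{D-4}{5}\right)=1$ are $D\equiv 0$ and $D\equiv 3$, so the hypothesis forces $D\equiv 0\pmod 5$, hence $w\equiv\pm 1\pmod 5$. Taking $w\equiv 1\pmod 5$ (the other sign is symmetric), the sign-constraints on $z_0$ collapse to the forbidden set $\{1,2,3\}$, and a direct evaluation shows $z_0=0$ also satisfies $z_0^2+wz_0-4\equiv-4\not\equiv 0\pmod 5$. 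This produces the point $(2,w,0)\in X_D^\ast(\Z_p)$, reducing to $(2,1,0)$ modulo $5$ and verifying every required condition. The hypothesis is sharp for this construction: had $D\equiv 3\pmod 5$, we would have $w\equiv\pm 2\pmod 5$ and a direct enumeration shows every residue of $\F_5$ is excluded.
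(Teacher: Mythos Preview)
Your proof is correct and follows essentially the same route as the paper's own argument: both fix $x_0=2$, use a square root $w=\sqrt{D-4}\in\Z_p^\times$ to parametrize the plane section as $(2,\,t+w,\,t)$, observe that $\partial_yP$ and $\partial_zP$ are automatically units, count that the remaining constraints exclude at most six residues of $t$, and then check directly that $t=0$ works when $p=5$ and $D\equiv 0\pmod 5$. Your write-up is slightly more detailed (you explicitly invoke Hensel, and you verify that the $p=5$, $D\equiv 3$ case genuinely exhausts all residues for this construction), but the argument is the same.
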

\begin{proof}
    Choose a point $(x_0,y_0,z_0)=(2,t+\sqrt{D-4},t)$. For any $t\in\Z_p$, this is a point in $X_D^\ast(\Z_p)$, as $P(x_0,y_0,z_0)=D$, $\partial_yP(x_0,y_0,z_0)=2\sqrt{D-4}\not\equiv 0\pmod{p}$, and $\partial_zP(x_0,y_0,z_0)=-2\sqrt{D-4}\not\equiv 0\pmod{p}$. We furthermore pose the following conditions:
    \begin{itemize}
        \item $(t+\sqrt{D-4})^2\not\equiv 4\pmod{p}$, to have $y_0\not\equiv\pm 2\pmod{p}$;
        \item $t^2\not\equiv 4\pmod{p}$, to have $z_0\not\equiv\pm 2\pmod{p}$; and
        \item $4-t(t+\sqrt{D-4})\not\equiv 0\pmod{p}$, to have $\partial_xP\not\equiv 0\pmod{p}$.
    \end{itemize}
    The conditions exclude at most 6 choice of modulo $p$ residues of $t$, so if $p>6$ then we can find $t\in\Z_p$ satisfying the aboves. If $p=5$ and $D\not\equiv 3\pmod{5}$, then $D\equiv 0\pmod{5}$ must hold and $t=0$ works.
\end{proof}

\begin{proposition}
    \label{lem:minimal-polydisk-eg}
    Suppose $p>3$. Let $D\in\Z_p$ be a parameter such that either $D\equiv 0\pmod{p^2}$ or $\left(\frac{D-4}{p}\right)=1$, where we let $D\not\equiv 3\pmod{5}$ if $p=5$.
    
    Let $(x_0,y_0,z_0)\in X_D^\ast(\Z_p)$ be a point. If $\left(\frac{D-4}{p}\right)=1$, we specifically choose $(x_0,y_0,z_0)$ as in Lemma \ref{lem:special-point}; if otherwise, choose an arbitrary point. 
    Then the level 1 polydisk containing $(x_0,y_0,z_0)$ is minimal.
\end{proposition}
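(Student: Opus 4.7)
My plan is to verify the two conditions of Lemma \ref{lem:minimality-criterion} for the group $\Delta=\Psi^{-1}\Stab_\Gamma(U)\Psi$ acting on $\Z_p^2$, using an analytic parametrization $\Psi\colon\Z_p^2\to U$ built from Lemma \ref{lem:parametrization-expansion}. In Case 1 ($\left(\frac{D-4}{p}\right)=1$), Lemma \ref{lem:special-point} furnishes $(x_0,y_0,z_0)$ with all three partial derivatives of $P$ nonzero mod $p$, $x_0\equiv\pm 2\pmod p$, and $y_0,z_0\not\equiv\pm 2\pmod p$, so \SA{} holds with $\Psi$ based on the $(y,z)$-coordinates. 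In Case 2 ($D\equiv 0\pmod{p^2}$ with $\left(\frac{D-4}{p}\right)\neq 1$, forcing $p\equiv 3\pmod 4$), the reasoning of Proposition \ref{lem:counting-mod-p-elements} forces $x_0,y_0,z_0\not\equiv 0,\pm 2\pmod p$, and by the coordinate-permutation symmetry of $X_D^\ast$ I may assume $\partial_xP\not\equiv 0\pmod p$; thus $\Psi$ can again be based on the $(y,z)$-coordinates.

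For the \emph{minimal subdisk} step, I use $g=\Psi^{-1}(s_zs_x)^{(p^2-1)/4}\Psi$ and $h=\Psi^{-1}(s_xs_y)^{(p^2-1)/4}\Psi$, both in $\Delta$ by Lemma \ref{lem:examples-of-stabilizers}. The powers $g^p$ and $h^p$ are $\equiv\mathrm{id}\pmod p$, and the mod $p^2$ expansions \eqref{eqn:expansion-gp-mod-pp}, \eqref{eqn:expansion-hp-mod-pp} reduce the vectors $\tfrac{1}{p}(g^p(u_0,v_0)-(u_0,v_0))$ and $\tfrac{1}{p}(h^p(u_0,v_0)-(u_0,v_0))$ modulo $p$ to $(0,\,c_1(u_0+(y_0-y_1)/p))$ and $(c_2(v_0+(z_0-z_1)/p),\,0)$, where $c_1,c_2\in\Z_p^\times$ since $N=(p^2-1)/2$, $\partial_xP$, and $y_0^2-4,z_0^2-4$ are all units mod $p$. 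The determinant is a unit precisely when $u_0\not\equiv-(y_0-y_1)/p$ and $v_0\not\equiv-(z_0-z_1)/p\pmod p$, a pair of hyperplane conditions on $(u_0,v_0)\in(\Z/p\Z)^2$ that can be simultaneously satisfied for any $p>3$. Proposition \ref{lem:local-minimality} then yields invariance and minimality of $\langle g^p,h^p\rangle$ on the level 2 subdisk $(u_0,v_0)+(p\Z_p)^2$.

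For \emph{residual transitivity} of $\Delta$ on $(\Z/p\Z)^2$, I split the argument. In Case 1, I include $f=\Psi^{-1}(s_ys_z)^p\Psi$: Lemma \ref{lem:expansion-para-f} shows $f$ is modulo $p$ a translation by the nonzero vector $a=(\partial_yP,-\partial_zP)$. The linear parts of $g,h$ modulo $p$ are the shears $\bigl(\begin{smallmatrix}1&0\\c_1&1\end{smallmatrix}\bigr)$ and $\bigl(\begin{smallmatrix}1&c_2\\0&1\end{smallmatrix}\bigr)$ with $c_1,c_2\in\F_p^\times$, which generate $SL_2(\F_p)$; conjugating the translation $f$ by $\langle g,h\rangle$ produces the translation by $La$ for every $L\in SL_2(\F_p)$, and since $SL_2(\F_p)$ acts transitively on $\F_p^2\setminus\{0\}$, every nonzero translation lies in $\Delta\bmod p$, giving transitivity. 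In Case 2, no such explicit translation is available; instead $\langle g,h\rangle\bmod p$ has the common fixed point $q=(-(y_0-y_1)/p,-(z_0-z_1)/p)$, and after re-centering at $q$ its linear action is $SL_2(\F_p)$, so its two orbits on $\F_p^2$ are $\{q\}$ and $\F_p^2\setminus\{q\}$. To merge these, I apply Proposition \ref{lem:pigeonhole-mod-pp} at the point $\Psi(\tilde q)\in U$ (for any lift $\tilde q\in\Z_p^2$ of $q$) to get $\gamma\in\Stab_\Gamma(U)$ with $\dist(\Psi(\tilde q),\gamma\Psi(\tilde q))=p^{-1}$; via $\Psi$ this means $\gamma$ does not fix $q$ in $\F_p^2$, so $\{q\}$ cannot be a $\Delta$-orbit singleton, forcing $\Delta$ to act transitively on $\F_p^2$.

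The main obstacle is Case 2 residual transitivity: unlike Case 1, the stabilizer does not obviously contain a nontrivial mod $p$ translation, and the fusion of the two $\langle g,h\rangle$-orbits rests on the abstract counting/pigeonhole of Proposition \ref{lem:pigeonhole-mod-pp}. Given both steps, Lemma \ref{lem:minimality-criterion} concludes that $\Delta$ acts minimally on $\Z_p^2$, which via $\Psi$ is exactly minimality of $\Stab_\Gamma(U)$ on $U$---i.e., $U$ is a minimal level 1 polydisk.
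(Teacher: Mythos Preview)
Your proof is correct and follows essentially the same strategy as the paper: verify the two hypotheses of Lemma \ref{lem:minimality-criterion} using $g=\Psi^{-1}(s_zs_x)^{(p^2-1)/4}\Psi$, $h=\Psi^{-1}(s_xs_y)^{(p^2-1)/4}\Psi$, and an extra element furnished by Propositions \ref{lem:pigeonhole-mod-pp-1}/\ref{lem:pigeonhole-mod-pp}. The only differences are cosmetic: the paper first re-centers the parametrization so that $T_p(y_0)=y_0$ and $T_p(z_0)=z_0$, which makes the affine parts of $g,h$ vanish and places the common fixed point at the origin, whereas you carry the offsets $(y_0-y_1)/p,(z_0-z_1)/p$ throughout; and in Case 1 you argue residual transitivity by conjugating the translation $f=\Psi^{-1}(s_ys_z)^p\Psi$ through the $SL_2(\F_p)$ generated by the linear parts of $g,h$, while the paper uniformly uses the $\gamma$ of Proposition \ref{lem:pigeonhole-mod-pp-1} to move the fixed point off itself---but the proof of that proposition is exactly $\gamma=(s_ys_z)^p$, so the content is the same.
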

\begin{proof}
    Denote by $\mathbf{p}=(x_0,y_0,z_0)$ the point and by $U$ the level 1 polydisk containing $\mathbf{p}$. Denote by $P(x,y,z)=x^2+y^2+z^2-xyz$. If $\left(\frac{D-4}{p}\right)\neq1$, assume that $\partial_xP(\mathbf{p})=2x_0-y_0z_0\not\equiv 0\pmod{p}$ by shuffling coordinates.

    By an argument in Proposition \ref{lem:counting-mod-p-elements} if $D\equiv 0\pmod{p^2}$ with $\left(\frac{D-4}{p}\right)\neq 1$, or by the hypothesis if $\left(\frac{D-4}{p}\right)=1$, we have $y_0,z_0\not\equiv\pm 2\pmod{p}$. Hence by Lemma \ref{lem:examples-of-stabilizers}, $g=(s_zs_x)^{(p^2-1)/4}$ and $h=(s_xs_y)^{(p^2-1)/4}$ are stabilizing $U$.

    Here, we may assume that $T_p(y_0)=y_0$ and $T_p(z_0)=z_0$. To see this, as $\partial_xP\not\equiv 0\pmod{p}$, start with an analytic map $\xi\colon(y_0+p\Z_p)\times(z_0+p\Z_p)\to(x_0+p\Z_p)$ so that the graph of $\xi$ is $U$. In particular, if we denote by $y_1$ (respt. $z_1$) the fixed point of $T_p$ in the disk $y_0+p\Z_p$ (respt. $z_0+p\Z_p$), then we may change $(x_0,y_0,z_0)$ to $(\xi(y_1,z_1),y_1,z_1)$ yet keep all the required congruences in the hypothesis.

    Next, we apply Proposition \ref{lem:pigeonhole-mod-pp-1} (if $\left(\frac{D-4}{p}\right)=1$) or \ref{lem:pigeonhole-mod-pp} (if otherwise) to find $\gamma\in\Gamma$ such that $\dist(\mathbf{p},\gamma.\mathbf{p})=p^{-1}$.

    Let $\Psi$ be the parametrization of $U$ based on $(y,z)$-coordinates, centered at $(y_0,z_0)$. Recall $g=(s_zs_x)^{(p^2-1)/4}$, $h=(s_xs_y)^{(p^2-1)/4}$, and $\gamma$ in previous paragraphs. We show that $\Delta=\langle g,h,\gamma\rangle$ acts minimally on $U$, or equivalently, $\Psi^{-1}\Delta\Psi$ acts minimally on $\Z_p^2$.

    We show the residual transitivity. Because $T_p(y_0)=y_0$ and $T_p(z_0)=z_0$, \eqref{eqn:expansion-g-mod-p} and \eqref{eqn:expansion-h-mod-p} yield
    \begin{align*}
        \Psi^{-1}g\Psi\begin{pmatrix} u \\ v \end{pmatrix} &\equiv\begin{pmatrix} 1 & 0 \\ c_1 & 1 \end{pmatrix}\begin{pmatrix} u \\ v \end{pmatrix}\pmod{p}, \\
        \Psi^{-1}h\Psi\begin{pmatrix} u \\ v \end{pmatrix} &\equiv\begin{pmatrix} 1 & c_2 \\ 0 & 1 \end{pmatrix}\begin{pmatrix} u \\ v \end{pmatrix}\pmod{p},
    \end{align*}
    where $c_1,c_2$ are invertibe numbers as seen in \eqref{eqn:expansion-g-mod-p} and \eqref{eqn:expansion-h-mod-p}. Observe that the linear parts of $\Psi^{-1}g\Psi$ and $\Psi^{-1}h\Psi$ generate the group $\mathsf{SL}_2(\F_p)$, hence $\Psi^{-1}\langle g,h\rangle\Psi$ has two orbits in $\F_p^2=(\Z_p/p\Z_p)^2$: one is the origin $(0,0)$, and another is the set of all nonzero points.
    
    Let $(x',y',z')=\gamma.(x_0,y_0,z_0)$. Then at least one of $y'$ or $z'$ is different from $y_0$ or $z_0$ (respectively) modulo $p^2$. In particular, $\Psi^{-1}\gamma\Psi$ sends $(0,0)$ to a nonzero point, and therefore $\Psi^{-1}\langle g,h,\gamma\rangle\Psi$ has one orbit in $(\Z_p/p\Z_p)^2$, showing the residual transitivity.

    We show that $(u,v)+(p\Z_p)^2$ is a minimal subdisk by the subgroup $\Delta_0=\langle g^p,h^p\rangle$, if $(u,v)\in\Z_p^2$ is such that $uv\not\equiv 0\pmod{p}$. Observe that we can apply Proposition \ref{lem:local-minimality} to $\Psi^{-1}g^p\Psi$ and $\Psi^{-1}h^p\Psi$, as
    \begin{enumerate}[(a)]
        \item $\Psi^{-1}g^p\Psi\equiv\Psi^{-1}h^p\Psi\equiv\mathrm{id}\pmod{p}$ by \eqref{eqn:expansion-gp-mod-pp} and \eqref{eqn:expansion-hp-mod-pp}, and
        \item \eqref{eqn:indep-vectorial-part} holds: we compute, from \eqref{eqn:expansion-gp-mod-pp} and \eqref{eqn:expansion-hp-mod-pp},
        \begin{align*}
            \frac{\Psi^{-1}g^p\Psi(u,v)-(u,v)}{p} &\equiv \begin{pmatrix} 0 \\ c_1u \end{pmatrix}\pmod{p}, \\
            \frac{\Psi^{-1}h^p\Psi(u,v)-(u,v)}{p} &\equiv \begin{pmatrix} c_2v \\ 0 \end{pmatrix}\pmod{p},
        \end{align*}
        so \eqref{eqn:indep-vectorial-part} is equivalent to $c_1c_2uv\not\equiv 0\pmod{p}$ which was assumed.
    \end{enumerate}
    Hence $\Psi^{-1}\Delta_0\Psi$ acts minimally on $(u,v)+(p\Z_p)^2$.
    By Lemma \ref{lem:minimality-criterion}, we conclude that $\Delta$ acts minimally on $U$.
\end{proof}

Although we have excluded the case $p=5$ and $D\equiv 3\pmod{5}$ in Proposition \ref{lem:minimal-polydisk-eg}, we can secure this case anyways.
\begin{proposition}
    \label{lem:minimal-polydisk-exceptional}
    Let $p=5$ and $D\equiv 3\pmod{5}$. Then the level 1 polydisk $U$ of $X_D^\ast(\Z_5)$ containing $(\sqrt{D-4},2,0)$ is minimal.
\end{proposition}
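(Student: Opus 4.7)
My plan is to follow closely the strategy of Proposition \ref{lem:minimal-polydisk-eg}: produce two elements of $\Stab_\Gamma(U)$ whose pullbacks through a suitable parametrization verify the two hypotheses of Lemma \ref{lem:minimality-criterion}. The only new feature here is that the specified point $\mathbf{p}=(\sqrt{D-4},2,0)$ has \emph{two} parabolic coordinates ($x_0\equiv\pm 2$ and $y_0\equiv 2\pmod 5$), so both stabilizing elements must come from the parabolic powers analyzed in Lemma \ref{lem:expansion-para-f} and Proposition \ref{lem:companion-power-estimate-parab}, and there is no need (and no room) to invoke the elliptic case of Lemma \ref{lem:expansion-g-h}.

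First I would verify the point: $\sqrt{D-4}\in\Z_5$ since $D-4\equiv 4\pmod 5$; a direct check gives $\partial_xP=2\sqrt{D-4}$, $\partial_yP=4$, $\partial_zP=-2\sqrt{D-4}$ at $\mathbf{p}$, all nonzero modulo $5$. Moreover $z_0=0$ is already the unique fixed point of $T_5$ in its residue disk. Since $\partial_xP\not\equiv 0$, I set up the $(y,z)$-parametrization $\Psi$ of $U$ centered at $(y_0,z_0)=(2,0)$ via Lemma \ref{lem:parametrization-expansion}. Because $x_0,y_0\equiv\pm2\pmod 5$, Lemma \ref{lem:examples-of-stabilizers}(b) and its coordinate-permuted version tell us that both $f_0=(s_ys_z)^p$ and $g_0=(s_zs_x)^p$ lie in $\Stab_\Gamma(U)$. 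Lemma \ref{lem:expansion-para-f} immediately yields $\Psi^{-1}f_0\Psi(u,v)\equiv(u,v)+(\partial_yP,-\partial_zP)\pmod p$; a parallel computation---pulling back $C(y)^{2p}$ via Proposition \ref{lem:companion-power-estimate-parab} through $\Psi$, now acting on the $(z,x)$ coordinate slots---gives $\Psi^{-1}g_0\Psi(u,v)\equiv(u,v)+(0,\partial_zP)\pmod p$. The two shift vectors have determinant $\partial_yP\cdot\partial_zP\equiv 4\cdot(\mp 4)\not\equiv 0\pmod 5$, so $\Psi^{-1}\langle f_0,g_0\rangle\Psi$ generates all translations of $(\Z_p/p\Z_p)^2$, verifying the residual transitivity hypothesis of Lemma \ref{lem:minimality-criterion}.

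For the minimal subdisk hypothesis, I would pass to $f_0^p=(s_ys_z)^{p^2}$ and $g_0^p=(s_zs_x)^{p^2}$, both still in $\Stab_\Gamma(U)$. Repeating the above computations with the $2p^2$-th power estimate \eqref{eqn:companion-power-estimate-parab-2} in place of \eqref{eqn:companion-power-estimate-parab-1}, every shift vector acquires an extra factor of $p$, yielding
\[
\Psi^{-1}f_0^p\Psi(u,v)\equiv(u,v)+p(\partial_yP,-\partial_zP),\quad \Psi^{-1}g_0^p\Psi(u,v)\equiv(u,v)+p(0,\partial_zP)\pmod{p^2}.
\]
Both maps are $\equiv\mathrm{id}\pmod p$, and at the origin the vectors $(\partial_yP,-\partial_zP)^T$ and $(0,\partial_zP)^T$ satisfy the nondegeneracy \eqref{eqn:indep-vectorial-part} (same determinant as before). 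Proposition \ref{lem:local-minimality} then certifies that $\Psi^{-1}\langle f_0^p,g_0^p\rangle\Psi$ acts invariantly and minimally on $(0,0)+(p\Z_p)^2$.

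Finally, applying Lemma \ref{lem:minimality-criterion} with $\Delta=\Psi^{-1}\langle f_0,g_0\rangle\Psi$ and $\Delta_0=\Psi^{-1}\langle f_0^p,g_0^p\rangle\Psi$, I conclude that $\langle f_0,g_0\rangle\subset\Stab_\Gamma(U)$ acts minimally on $U$, so $U$ is a minimal polydisk. The only genuinely new work is the side computation of $\Psi^{-1}g_0\Psi$ and its $p$-th power, which is a verbatim analogue of Lemma \ref{lem:expansion-para-f} with the roles of the coordinates permuted; everything else is a straightforward application of the machinery already in place.
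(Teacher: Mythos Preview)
Your argument is correct and in fact cleaner than the paper's own proof. Both proofs use the $(y,z)$-parametrization $\Psi$ centered at $(2,0)$ and establish residual transitivity via the two parabolic translations $\Psi^{-1}(s_ys_z)^5\Psi$ and $\Psi^{-1}(s_zs_x)^5\Psi$; your computation of the second of these as a shift by $(0,\partial_zP)$ agrees with the paper's direct evaluation. Where you diverge is in the minimal subdisk step: the paper brings in the elliptic element $(s_xs_y)^6$ (available because $z_0=0\not\equiv\pm 2$) and appeals to the conjugation device of Proposition~\ref{lem:conjugate-local-minimality}, pairing $(s_zs_x)^{25}$ with $(s_xs_y)^6$. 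You instead stay entirely within the parabolic world, squaring up the same two elements to $(s_ys_z)^{25}$ and $(s_zs_x)^{25}$, reading off their mod $p^2$ expansions from \eqref{eqn:companion-power-estimate-parab-2}, and invoking the simpler Proposition~\ref{lem:local-minimality} directly. Since the resulting displacement vectors $(\partial_yP,-\partial_zP)$ and $(0,\partial_zP)$ are constant in $(u,v)$ and have nonzero determinant $\partial_yP\cdot\partial_zP$, your application of Proposition~\ref{lem:local-minimality} goes through at every point, not just the origin.

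One small inaccuracy of phrasing: you write there is ``no room'' for the elliptic case of Lemma~\ref{lem:expansion-g-h}. In fact $z_0=0$ is non-parabolic, so $(s_xs_y)^{(p^2-1)/4}$ is available and is exactly what the paper uses. Your point that it is not \emph{needed} stands, and avoiding it is a genuine simplification.
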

\begin{proof}
    Note that all partial derivatives of $P(x,y,z)=x^2+y^2+z^2-xyz$ are nonzero modulo $5$ at the point $(\sqrt{D-4},2,0)$, and that $\sqrt{D-4}\equiv\pm 2\pmod{5}$. Let $\Psi$ be the parametrization of $U$ based on $(y,z)$-coordinates, centered at $(2,0)$; note that $2$ and $0$ are fixed points of $T_5$.
    
    By Lemma \ref{lem:examples-of-stabilizers}, $(s_ys_z)^5$, $(s_zs_x)^5$, and $(s_xs_y)^6$ are fixing $U$. By \eqref{eqn:expansion-para-f-mod-p} and \eqref{eqn:expansion-h-mod-p},
    \begin{align*}
        \Psi^{-1}(s_ys_z)^5\Psi\begin{pmatrix} u \\ v \end{pmatrix} &\equiv \begin{pmatrix} u \\ v \end{pmatrix} + \begin{pmatrix} 4 \\ -2\sqrt{D-4} \end{pmatrix}\pmod{p}, \\
        \Psi^{-1}(s_xs_y)^6\Psi\begin{pmatrix} u \\ v \end{pmatrix} &\equiv \begin{pmatrix} 1 & -6\sqrt{D-4} \\ 0 & 1 \end{pmatrix}\begin{pmatrix} u \\ v \end{pmatrix}\pmod{p}.
    \end{align*}
    To estimate $\Psi^{-1}(s_zs_x)^5\Psi$, we follow the proof of Lemma \ref{lem:expansion-g-h}: set $N=10$ and get
    \begin{align*}
        \Psi^{-1}(s_zs_x)^5\Psi\begin{pmatrix} u \\ v \end{pmatrix} &= \begin{pmatrix} u \\ \frac15(U_{10}(2+5u)\cdot 5v-U_9(2+5u)\xi) \end{pmatrix},
        \intertext{which can be directly computed and reduced modulo 5 to}
        &\equiv\begin{pmatrix} u \\ v \end{pmatrix} + \begin{pmatrix} 0 \\ -2\sqrt{D-4} \end{pmatrix}\pmod{5}.
    \end{align*}
    A similar computation yields
    \[\Psi^{-1}(s_zs_x)^{25}\Psi\begin{pmatrix} u \\ v \end{pmatrix}\equiv\begin{pmatrix} u \\ v \end{pmatrix} + 5\begin{pmatrix} 0 \\ -2\sqrt{D-4}\end{pmatrix}\pmod{25}.\]
    
    As $\Psi^{-1}(s_ys_z)^5\Psi$ and $\Psi^{-1}(s_zs_x)^5\Psi$ are linearly independent translations modulo $5$, they establish the residual transitivity. Moreover, $(0,0)+(5\Z_5)^2$ is a minimal subdisk thanks to Proposition \ref{lem:conjugate-local-minimality}: one can apply $f=\Psi^{-1}(s_zs_x)^{25}\Psi$ and $g=\Psi^{-1}(s_xs_y)^6\Psi$ to the proposition and get the minimality. This shows that $U$ is minimal.
\end{proof}

\subsection{Proof of Theorem \ref{thm:main}}
\label{sec:proof}

Set $\Delta=\Aut(X_D^\ast)$. We can clear the existence of minimal level 1 subdisk, required in Corollary \ref{lem:minimality-criterion-surface}, by Propositions \ref{lem:minimal-polydisk-eg} and \ref{lem:minimal-polydisk-exceptional}. As we have assumed residual transitivity, the minimality is shown.

\section{Generalizations}
\label{sec:generalization}

For parameters other than $D\equiv 0\pmod{p^2}$ or $\left(\frac{D-4}{p}\right)=1$, there might be a chance that residual transitivity does not imply minimality. This section discusses about when such cases may happen.

\subsection{Issues regarding ``lack of small moves''}

If $D\not\equiv 0\pmod{p^2}$ and $\left(\frac{D-4}{p}\right)\neq 1$, we see that it is not easy to generaize Propositions \ref{lem:pigeonhole-mod-pp-1} and \ref{lem:pigeonhole-mod-pp}: we may fail to find $\mathbf{p}\in X_D^\ast(\Z_p)$ and $\gamma\in\Gamma$ such that $\dist(\mathbf{p},\gamma.\mathbf{p})=p^{-1}$.

One easy guess to try is, assuming $\partial_xP=2x_0-y_0z_0\in\Z_p^\times(=\Z_p\setminus p\Z_p)$, to put $\gamma$ an appropriate power of $(s_ys_z)$. But if $\left(\frac{D-4}{p}\right)=-1$, then $x_0\not\equiv\pm 2\pmod{p}$, and $(s_ys_z)^p$ no longer works like Proposition \ref{lem:pigeonhole-mod-pp-1}. It turns out that powers of $(s_ys_z)$ ``keeps an appropriate distance from the the identity'' only when $x_0$ is appropriately distanced from the trace of rational rotation in $x_0+p\Z_p$.
\begin{lemma}
    \label{lem:expansion-nonpara-f}
    Suppose $x_0\not\equiv\pm 2\pmod{p}$ and $\partial_xP(x_0,y_0,z_0)\in\Z_p^\times$, where $P(x,y,z)=x^2+y^2+z^2-xyz$. Denote by $x_1$ the fixed point of $T_p$ on the disk $x_0+p\Z_p$, and by $\Psi$ the parametrization of the level 1 polydisk of $(x_0,y_0,z_0)$, based on $(y,z)$-coordinates and centered at $(y_0,z_0)$. 
    Let $N=(p^2-1)/2$, and
    \begin{align*}
        \mathbf{w}_1 &= \frac{N}{x_0^2-4}\begin{pmatrix}-\partial_zP \\ \partial_yP \end{pmatrix}, & \mathbf{w}_2 &= \frac{-1}{\partial_xP}\begin{pmatrix}\partial_yP \\ \partial_zP \end{pmatrix}.
    \end{align*}
    Then by $f=\Psi^{-1}(s_ys_z)^{N/2}\Psi$ we have
    \begin{align}
        f\begin{pmatrix} u \\ v \end{pmatrix} &\equiv (I_2+\mathbf{w}_1\mathbf{w}_2^\top)\begin{pmatrix} u \\ v \end{pmatrix}+\frac{x_0-x_1}{p}\mathbf{w}_1\pmod{p}. \label{eqn:expansion-npara-f-mod-p}
    \end{align}
    Here, $A^\top$ denotes the transpose of the matrix $A$.
\end{lemma}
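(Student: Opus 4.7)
The proof will mirror the earlier Lemma \ref{lem:expansion-para-f} (which treated the parabolic case $x_0\equiv\pm 2\pmod p$), but we now have to propagate the parameter-dependence of $x=\xi(y_0+pu,z_0+pv)$ through the hyperbolic expansion of Proposition \ref{lem:companion-power-estimate}, which is the source of the rank-one correction $\mathbf{w}_1\mathbf{w}_2^\top$.

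First, I would unpack the definition. As $(s_ys_z)^{N/2}$ acts on $(y,z)$ by multiplication by $C(x)^N$, conjugating by $\Psi$ and using $\Psi^{-1}(x,y,z)=(\tfrac1p(y-y_0),\tfrac1p(z-z_0))$ gives
\[
f\begin{pmatrix} u \\ v\end{pmatrix}= C(\xi)^N\begin{pmatrix} u \\ v\end{pmatrix}+\frac{C(\xi)^N-I_2}{p}\begin{pmatrix} y_0 \\ z_0\end{pmatrix},
\]
where $\xi=\xi(y_0+pu,z_0+pv)\in x_0+p\Z_p$. Since $C(\xi)^N\equiv I_2\pmod p$, the first summand contributes just $(u,v)^\top$ modulo $p$; the content of the lemma is therefore in analyzing $(C(\xi)^N-I_2)/p$.

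Next, I would apply Proposition \ref{lem:companion-power-estimate} with $x=\xi$ in place of $x_0+pu$ (legitimate because $\xi\in x_0+p\Z_p$), producing
\[
\frac{C(\xi)^N-I_2}{p}\equiv \frac{N}{x_0^2-4}\,\frac{\xi-x_1}{p}\,M\pmod p,\qquad M=\begin{pmatrix} x_0 & -2 \\ 2 & -x_0\end{pmatrix}.
\]
Then I would use the Taylor expansion \eqref{eqn:parametrization-expansion} of $\xi$ to compute
\[
\frac{\xi-x_1}{p}=\frac{\xi-x_0}{p}+\frac{x_0-x_1}{p}\equiv \frac{x_0-x_1}{p}-\frac{\partial_yP}{\partial_xP}u-\frac{\partial_zP}{\partial_xP}v\pmod p,
\]
recognizing the linear part in $(u,v)$ as exactly $\mathbf{w}_2^\top(u,v)^\top$.

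Finally, I would compute $M(y_0,z_0)^\top$ directly: the two entries are $x_0y_0-2z_0=-\partial_zP$ and $2y_0-x_0z_0=\partial_yP$, so that $\frac{N}{x_0^2-4}M(y_0,z_0)^\top=\mathbf{w}_1$. Substituting into the displayed formula for $f$ then yields
\[
f\begin{pmatrix} u\\v\end{pmatrix}\equiv\begin{pmatrix} u\\v\end{pmatrix}+\Bigl(\tfrac{x_0-x_1}{p}+\mathbf{w}_2^\top(u,v)^\top\Bigr)\mathbf{w}_1\pmod p,
\]
which, after regrouping the linear-in-$(u,v)$ term as $\mathbf{w}_1\mathbf{w}_2^\top(u,v)^\top$, is exactly \eqref{eqn:expansion-npara-f-mod-p}. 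The only subtle point is making sure the $(u,v)$-linear contribution collected from $(\xi-x_1)/p$ aligns with the definition of $\mathbf{w}_2$; once that is in place the whole computation is a direct substitution, and no obstacle beyond bookkeeping arises.
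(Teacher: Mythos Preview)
Your proposal is correct and is precisely the computation the paper has in mind: the paper's proof is the single sentence ``Same analysis as Lemma \ref{lem:expansion-para-f}, except that we now take $\xi\equiv x_0+p\mathbf{w}_2^\top\bigl(\begin{smallmatrix} u \\ v \end{smallmatrix}\bigr)\pmod{p^2}$ into account,'' and you have carried that out in full. The decomposition $f=C(\xi)^N(u,v)^\top+\tfrac{1}{p}(C(\xi)^N-I_2)(y_0,z_0)^\top$, the invocation of Proposition \ref{lem:companion-power-estimate} at $\xi$, and the identification of $M(y_0,z_0)^\top$ with $(x_0^2-4)N^{-1}\mathbf{w}_1$ are exactly the intended steps.
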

\begin{proof}
    Same analysis as Lemma \ref{lem:expansion-para-f}, except that we now take $\xi\equiv x_0+p\mathbf{w}_2^\top(\begin{smallmatrix} u \\ v \end{smallmatrix})\pmod{p^2}$ into account.
\end{proof}

To make this expansion useful, we need an extra data (here, $T_p(\mathbf{q})$ means $(T_p(x_0),T_p(y_0),T_p(z_0))$ if $\mathbf{q}=(x_0,y_0,z_0)$):
\begin{itemize}
    \item[\XD] There exist a point $\mathbf{p}\in X_D^\ast(\Z_p)$ and an element $\alpha\in\Gamma$ with $P(T_p(\alpha.\mathbf{p}))\not\equiv D\pmod{p^2}$.
\end{itemize}
If we have \XD{} and $\left(\frac{D-4}{p}\right)=-1$, then we can select $\gamma$ from the set
\[\{\alpha^{-1}f^{(p^2-1)/4}\alpha : f=s_ys_z,s_zs_x,\text{ or }s_xs_y\}\]
so that $\dist(\mathbf{p},\gamma.\mathbf{p})=p^{-1}$. Having this, the proof of Proposition \ref{lem:minimal-polydisk-eg} applies to show that $\Aut(X_D^\ast)$ acts minimally on the closed $p^{-1}$-neighborhood of the orbit of $\mathbf{p}$.

By Lemma \ref{lem:Tp-contraction}, if $\mathbf{p}\equiv\mathbf{q}\pmod{p}$ then $T_p(\mathbf{p})\equiv T_p(\mathbf{q})\pmod{p^2}$. It follows that, if \XD{} fails, then the $T_p$-images of each point of $X_D^\ast(\Z_p)$ form an $\Aut(X_D^\ast)$-orbit in $X_D^\ast(\Z/p^2\Z)$. This orbit must be a proper subset of $X_D^\ast(\Z/p^2\Z)$, so the minimality fails. Hence \XD{} is not only a technical assumption but also a pivoting assumption to see if residual transitivity implies minimality.

It is known that, if $\left(\frac{D-4}{p}\right)=-1$, then $|X_D^\ast(\Z/p\Z)|\leq(p-1)^2$. So to have \XD{}, it suffices to see that every $\Aut(X_D^\ast)$-orbit of $X_D^\ast(\Z/p^2\Z)$ has cardinality a multiple of $p^2$. It turns out that, if $X_D^\ast(\Z_p)$ stays close to a finite orbit, then this divisibility is in general false.

\subsection{Issues regarding the Periodic Orbits}

According to \cite{LisovyyTykhyy2014}, Lem. 39 of p. 147, Thm. 1 of p. 149, and Table 4 of pp. 150--151, there are a few exceptional cases where $X_D^\ast$ carries periodic orbits.
\begin{itemize}
    \item If $D$ is a nonzero quadratic residue, then $(0,0,\pm\sqrt{D})$, $(0,\pm\sqrt{D},0)$, and $(\pm\sqrt{D},0,0)$ form a finite orbit.
    \item If $D=4$, any orbit outside of the `cage' (a term from \cite[\S{3.2}]{BGS16details}) forms a small finite orbit.
    \item If $D=2$, then the orbit of $(1,1,1)$ forms a finite orbit, of size 16.
    \item If $\sqrt{2}\in\Z_p$ and $D=3$, then the orbit of $(1,\sqrt{2},\sqrt{2})$ forms a finite orbit, of size 12.
    \item If $\sqrt{5}\in\Z_p$ and $D=3$ or $D=(5\pm\sqrt{5})/2$, then the orbit of $(0,\phi^{-1},\phi)$, $(\phi,\phi,\phi)$, or $(-\phi^{-1},-\phi^{-1},-\phi^{-1})$ forms a finite orbit, of size 72, 40, and 40 respectively, where $\phi=\frac12(1+\sqrt{5})$ is the golden ratio.
\end{itemize}

If $D$ is congruent to one of the above cases modulo $p$ and $p\geq 11$, then modulo $p$ images of these finite orbits are too small to fill up the entire $X_D^\ast(\F_p)$. Hence the residual transitivity fails. 
If $D$ is congruent to one the above cases modulo $p^2$, then the $p^{-2}$-neighborhood of these finite orbits form an invariant set in $X_D^\ast(\Z/p^2\Z)$ of cardinality at most $|X_D^\ast(\Z/p\Z)|$. Hence the minimality fails.

We conjecture that, outside of the $p^{-1}$-neighborhoods of these periodic points, we still have minimality over $p$-adic integer points. We propose the following intermediate steps to the conjecture as follows.
\begin{question}
    \label{lem:question-XD}
    \begin{enumerate}
        \item Suppose $D$ is not congruent modulo $p$ to any of the parameters admitting periodic orbits. 
        Is every $\Aut(X_D^\ast)$-orbit of $X_D^\ast(\Z/p^2\Z)$ has cardinality a multiple of $p^2$?
        \item Suppose we have a point $\mathbf{p}\in X_D^\ast(\Z_p)$ such that for every periodic point $\mathbf{q}$ listed above, we have $\dist(\gamma.\mathbf{p},\mathbf{q})=1$ for all $\gamma\in\Gamma$. Does \XD{} hold by this point $\mathbf{p}$?
    \end{enumerate}
\end{question}

\subsection*{Acknowledgements} The author would like to thank to Alonso Beaumont Llona, Serge Cantat, Elena Fuchs, Alexander Gamburd, Daniel Martin, and Peter Sarnak for the discussions, comments, and helps regarding this work. The research activities of the author is partially funded by the European Research Council (ERC GOAT 101053021). 

\bibliography{references}{}
\bibliographystyle{amsalpha}

\end{document}